\begin{document}
\frontmatter
\title[Local Laplace transform]{The local Laplace transform of\\ \hbox{an elementary irregular meromorphic connection}}

\author[M.~Hien]{Marco Hien}
\address{Lehrstuhl für Algebra und Zahlentheorie\\
Universitätsstraße 14\\
86159 Augsburg\\
Deutschland}
\email{marco.hien@math.uni-augsburg.de}
\urladdr{http://alg.math.uni-augsburg.de/mitarbeiter/mhien/}

\author[C.~Sabbah]{Claude Sabbah}
\address{UMR 7640 du CNRS\\
Centre de Mathématiques Laurent Schwartz\\
École polytechnique\\
F--91128 Palaiseau cedex\\
France}
\email{Claude.Sabbah@polytechnique.edu}
\urladdr{http://www.math.polytechnique.fr/~sabbah}

\thanks{M.H.: This research was supported by the grant DFG-HI 1475/2-1 of the Deutsche Forschungsgemeinschaft\\
C.S.: This research was supported by the grants ANR-08-BLAN-0317-01 and ANR-13-IS01-0001-01 of the Agence nationale de la recherche.}

\begin{abstract}
We give a definition of the topological local Laplace transformation for a Stokes-filtered local system on the complex affine line and we compute in a topological way the Stokes data of the Laplace transform of a differential system of elementary type.
\end{abstract}

\subjclass{14D07, 34M40}

\keywords{Laplace transformation, meromorphic connection, Stokes matrix, Stokes filtration}
\maketitle
\tableofcontents

\mainmatter

\section{Introduction}
\Subsection{Riemann-Hilbert correspondence and Laplace transformation}\label{subsec:general}
Let $M$ be a holonomic $\Clt$-module and let $\Fou M$ be its Laplace transform, which is a holonomic $\CC[\tau']\langle\partial_{\tau'}\rangle$-module through the correspondence $\tau'=\partial_t$ and $t=-\partial_{\tau'}$, that is, given by the kernel $\exp(-t\tau')$. According to the Riemann-Hilbert correspondence as stated by Deligne (see \cite{Deligne77}, \cite{Malgrange91} and \cite[Chap.\,5]{Bibi10}), the holonomic $\Clt$-module $M$ corresponds to a Stokes-perverse sheaf on the projective line $\PP^1_t$ with affine coordinate $t$, which is a Stokes-filtered local system in the neighbourhood of $t=\infty$. Similarly, $\Fou M$ corresponds to a Stokes-perverse sheaf on $\PP^1_{\tau'}$, which is a Stokes-filtered local system in the neighbourhood of $\tau'=\infty$. In the following, we will denote by $\tau$ the coordinate of $\PP^1_{\tau'}$ centered at $\wh\infty$ such that $\tau=1/\tau'$ on $\PP^1_{\tau'}\moins\{0,\wh\infty\}$, and we continue to set $\wh\infty=\{\tau=0\}$. The topological Laplace transformation is the corresponding transformation at the level of the categories of Stokes-perverse sheaves. We will use results of \cite{Mochizuki10} to make clear its definition. A topological Laplace transformation can also be defined in the setting of enhanced ind-sheaves considered in \cite{D-K13}, which corresponds, through the Riemann-Hilbert correspondence of \loccit, to the Laplace transformation of holonomic $\cD$-modules (see also \cite{K-S14}).

\Subsection{Riemann-Hilbert correspondence and local Laplace transformation}\label{subsec:local}
In this article, we will consider the local Laplace transformation $\ccF^{(0,\infty)}$ from the category of finite dimensional $\CC\lpb t\rpb$-vector spaces with connection to that of finite dimensional $\CC\lpb \tau\rpb$-vector spaces with connection. It is defined as follows. A~finite dimensional $\CC\lpb t\rpb$-vector space with connection $\ccM$ can be extended in a unique way as a holonomic $\Clt$-module $M$ with a regular singularity at infinity and no other singularity at finite distance than $t=0$. Then $\ccF^{(0,\infty)}(\ccM)$ is by definition the germ~$\Fou M_{\wh\infty}$ of $\Fou M$ at $\wh\infty:=\{\tau'=\infty\}$. We will regard $\ccF^{(0,\infty)}(\ccM)$ as a $\CC\lpb \tau\rpb$-vector space with connection. It is well-known that $\wh M$ has at most a regular singularity at $\tau'=0$ and no other singularity at finite distance. Therefore, giving $\ccF^{(0,\infty)}(\ccM)$ is equivalent to giving the localized module $\CC[\tau',\tau^{\prime-1}]\otimes_{\CC[\tau']}\wh M$.

The Deligne-Riemann-Hilbert correspondence associates to $\ccM$ a Stokes-filtered local system $(\cL,\cL_\bbullet)$ on $\SS^1_{t=0}$ (the circle with coordinate $\arg t$). Similarly, we will denote by $\ccF^{(0,\infty)}_\top(\cL,\cL_\bbullet)$ the Stokes-filtered local system on $\SS^1_{\tau=0}$ associated with $\ccF^{(0,\infty)}(\ccM)$. In this setting, we will address the following questions:
\begin{itemize}
\item
To make explicit the topological local Laplace transformation functor $\ccF^{(0,\infty)}_\top$.
\item
To use this topological definition to compute, in some examples, the Stokes structure of $\ccF^{(0,\infty)}(\ccM)$ in terms of that of $\ccM$.
\end{itemize}

\subsection{Statement of the results}
The first goal will be achieved in \S\ref{sec:topLaplace}. As for the second one, we will restrict to the family of examples consisting of elementary meromorphic connections, denoted by $\El(\rho,-\vi,\ccR)$ in \cite{Bibi07a}. Namely, $\rho:u\mto t=u^p$ is a ramification of order~$p$ of the variable~$t$, $\vi$ is a polynomial in $u^{-1}$ without constant term, and $\ccR$ is a finite dimensional $\CC\lpb u\rpb$-vector space with a regular connection. Setting $\ccE^{-\vi}=(\CC\lpb u\rpb,\rd-\nobreak\rd\vi)$, we define
\[
\ccM=\El(\rho,-\vi,\ccR):=\rho_+(\ccE^{-\vi}\otimes \ccR).
\]
If we extend $\ccR$ as a free $\CC[u,u^{-1}]$-module $R$ of finite rank with a connection having a regular singularity at $u=0$ and $u=\infty$ and no other singularity, and set $E^{-\vi}=(\CC[u,u^{-1}],\rd-\rd\vi)$, the extension $M$ of $\ccM$ considered above is nothing but the free $\CC[t,t^{-1}]$-module with connection
\[
M=\El(\rho,-\vi,R):=\rho_+(E^{-\vi}\otimes R).
\]
Let $q$ be the pole order of $\vi $. We also encode $R$ as a pair $(\bV,\bT)$ of a vector space with an automorphism (the formal monodromy). We call $\vi$ the \emph{exponential factor} of $\rho^+\ccM$ and set $\Exp=\Exp(\rho^+\ccM)=\{\vi\}$.

\begin{assumption}\label{ass:pqcoprime}
We will assume in this article that $p,q$ are coprime.
\end{assumption}

By the stationary phase formula of \cite{Fang07, Bibi07a} the formal Levelt-Turrittin type of the local Laplace transform $\ccF^{(0,\infty)}(\ccM)$ is that of $\El(\wh\rho,-\wvi,\wh R)$, where
\begin{itemize}
\item
the ramification $\wh\rho:\eta\mto\tau$ has order $\wh p:=p+q$, \ie $\wh\rho\in\eta^{p+q}(c(\rho,\vi)+\eta\CC\{\eta\})$ for some nonzero constant $c(\rho,\vi)$; writing $\rho$ and $\vi$ in the $\eta$ variable, it is expressed as $\wh\rho(\eta)=-\rho'(\eta)/\vi'(\eta)$;
\item
the exponential factor $\wvi(\eta)=\vi(\eta)-\vi'(\eta)\rho(\eta)/\rho'(\eta)$ has pole order $q$;
\item
the regular part $\wh R$ corresponds to the pair $(\wh\bV,\wh\bT)=(\bV,(-1)^q\bT)$.
\end{itemize}
Our goal is then to compute explicitly the Stokes structure of the non-ramified meromorphic connection $\wh\rho^+\ccF^{(0,\infty)}(\ccM)$, which is usually non trivial. It is of pure level~$q$. Stokes structures of pure level $q$ can be represented in various ways by generalized monodromy data. We make use of the following description (see Section \ref{sec:Stokesdata} for details).

To the formal data attached to $\wh\rho^+\ccF^{(0,\infty)}(\ccM)$, which consist of the family $\wh\Exp:=\Exp(\wh\rho^+\ccF^{(0,\infty)}(\ccM))=\{\wvi_{\wzeta}(\eta)\mid\wzeta\in\mu_{p+q}\}$ ($\mu_n$ is the group of $n$-th roots of unity and $\wvi_{\wzeta}=\wvi(\wzeta\eta(1+\ro(\eta))$) and the pair
$(\oplus_{\wzeta\in\mu_{p+q}}\bV,\oplus_{\wzeta\in\mu_{p+q}}(-1)^q\bT)$, we add:

\refstepcounter{equation}\label{eq:genericargument}
\noindent\eqref{eq:genericargument}\enspace
the choice of a generic argument $\vtn_o\in\SS^1_{\eta=0}$, giving rise to a total ordering of the set~$\wh\Exp$, hence an order preserving numbering $\{0,\dots,p+q-1\}\simeq\mu_{p+q}$,

\smallskip
\refstepcounter{equation}\label{eq:linearStokesData}
\noindent\eqref{eq:linearStokesData}\enspace
a set of \emph{unramified linear Stokes data $\bigl((\bLMH_\ell),(S_\ell^{\ell+1}), (F\bLMH_\ell)\bigr)_{\ell=0,\dots,2q-1}$ of pure level $q$}, which consists~of
\begin{itemize}
\item
a family $(\bLMH_\ell)_{\ell=0,\dots,2q-1}$ consisting of $2q$ $\CC$-vector spaces,
\item
isomorphisms $S_\ell^{\ell+1}: \bLMH_{\ell} \isom \bLMH_{\ell+1}$,
\item
for each $\mu\in\{0,\dots,q-1\}$, a finite exhaustive\footnote{\label{ftn:exhaustive}We say that an increasing filtration $F_\bbullet\bLMH$ indexed by a totally ordered finite set $O$ is \emph{exhaustive} if $F_{\max O}\bLMH=\bLMH$. We then set $F_{<\min O}\bLMH=0$. For a decreasing filtration, we have $F^{\min O}\bLMH=\bLMH$ and we set $F^{>\max O}\bLMH=0$.} increasing filtration $F_\bbullet\bLMH_{2\mu}$ (\resp decreasing filtration $F^\bbullet\bLMH_{2\mu+1}$) indexed by $\{0,\dots,p+q-1\}$, with the property that the filtrations are mutually opposite with respect to the isomorphisms $S_\ell^{\ell+1}$, \ie for any $\mu\in\{0,\dots,q-1\}$:
\begin{starequation}\label{eq:opposite1intro}
\begin{split}
\bLMH_{2 \mu}&=\bigoplus_{k=0}^{p+q-1}\bigl(F_k\bLMH_{2 \mu}\cap S_{2 \mu-1}^{2 \mu}(F^k\bLMH_{2 \mu-1})\bigr),\\
\bLMH_{2 \mu +1}&=\bigoplus_{k=0}^{p+q-1}\bigl(F^k\bLMH_{2 \mu +1}\cap S_{2 \mu}^{2 \mu +1}(F_k\bLMH_{2 \mu})\bigr),
\end{split}
\end{starequation}
with the convention that $\bLMH_{-1}=\bLMH_{2q-1}$.
\end{itemize}

Note that the opposite filtrations induce uniquely determined splittings, that is, filtered isomorphisms
\begin{equation}
\begin{cases}
\tau_{2 \mu}:\bLMH_{2 \mu} \isom\gr^F \bLMH_{2 \mu}=:\bigoplus_{k=0}^{p+q-1}\gr^F_k \bLMH_{2 \mu},\\
\tau_{2 \mu +1}:\bLMH_{2 \mu +1} \isom\gr_F \bLMH_{2 \mu +1}=:\bigoplus_{k=0}^{p+q-1}\gr_F^k\bLMH_{2 \mu +1},
\end{cases}
\end{equation}
where the right hand side carries its natural increasing (\resp decreasing) filtration. The resulting isomorphisms $\Sigma_\ell^{\ell+1} :=\tau_{\ell+1} \circ S_\ell^{\ell+1} \circ \tau_\ell^{-1}: \gr_F \bLMH_\ell\to\gr_F \bLMH_{\ell+1}$ are upper/lower block triangular -- depending on the parity of $\ell$ -- and their blocks $(S_\ell^{\ell+1})_{j, j}$ on the diagonal are isomorphisms. These matrices are commonly called the Stokes matrices or Stokes multipliers. We call the family
\begin{equation}\label{eq:fracSFM}
\fS\bigl(\wh\rho^+\ccF^{(0,\infty)}(\ccM)\bigr):=\bigl((\bLMH_\ell),(S_\ell^{\ell+1}), (F\bLMH_\ell)\bigr)_{\ell=0,\dots,2q-1}
\end{equation}
the linear part of the Stokes data of $\wh\rho^+\ccF^{(0,\infty)}(\ccM)$. The notion of morphism between such families is obvious.

On the other hand, we will define in \S\ref{subsec:modelStokes} standard linear Stokes data that we denote by $\fS^\std(\bV, \bT, p, q)$.

The main result regarding the explicit determination of the Stokes data of the local Laplace transform of $\El(\rho, - \vi, \ccR)$ can be stated as follows.

\begin{theoreme}\label{thm:mainintro}
With the previous notation and assumptions, for a suitable choice of~$\vtn_o$, the linear Stokes data $\fS\bigl(\wh\rho^+\ccF^{(0,\infty)}(\ccM)\bigr)$, for $\ccM=\El(\rho,-\vi,(\bV,\bT))$, are isomorphic to the standard linear Stokes data $\fS^\std(\bV, \bT, p, q)$.
\end{theoreme}

\begin{remarques}\mbox{}
\begin{enumerate}
\item
In order to descend from this result to the linear Stokes data of $\ccF^{(0,\infty)}(\ccM)$, we would need to identify the $\mu_{p+q}$-action on the standard linear Stokes data. This will not be achieved in this article.
\item
The arguments in the proof also lead to an explicit computation of the topological monodromy of $\ccF^{(0,\infty)}(\ccM)$ up to conjugation (see Proposition \ref{prop:topmonointro}).
\item
Similar results were obtained by T.\,Mochizuki in \cite[\S3]{Mochizuki09b}, by using explicit bases of homology cycles, while we use here cohomological methods. Also, the results of \loccit do not make explicit a standard model for the linear Stokes data, although it should be in principle possible to obtain such a model from these results.
\end{enumerate}
\end{remarques}

\subsection{The standard linear Stokes data}\label{subsec:modelStokes}
We will now define the set $\fS^\std(\bV,\bT,p,q)$ attached to a finite dimensional vector space $\bV$ equipped with an automorphism $\bT$, and to a pair of coprime integers $(p,q)\in(\NN^*)^2$. We refer to Section \ref{sec:standard} for a geometric motivation which leads to the definitions below. We start by defining two orderings on $\mu_{p+q}$.

\begin{definition}\label{def:standardorder}
For $\wzeta,\wzeta'\in\mu_{p+q}$, we set\enlargethispage{\baselineskip}%
\begin{starequation}\label{eq:standardorder}
\wzeta\lodd\wzeta'\text{ if }\reel(e^{-\ve i}(\wzeta^p-\wzeta^{\prime p}))<0
\quad\text{for some $\ve$ such that $0<\ve\ll1$}
\end{starequation}
and $\wzeta \lev \wzeta' \Leftrightarrow\wzeta' \lodd\wzeta$. We enumerate $\mu_{p+q}$ according to the even ordering, \ie
\begin{starstarequation}\label{eq:evenum}
\mu_{p+q}=\{\wzeta_k\mid k=0,\dots,p+q-1\}\quad\text{with }\wzeta_0\lev\cdots\lev\wzeta_{p+q-1}.
\end{starstarequation}%
\end{definition}

Since $(p,q)=1$, we also have $(p+q,q)=1$ and there exist $a,b\in\ZZ$ with $ap=1+b(p+q)$. Set $\xi:=\exp(\frac{2\pi i}{p+q})\in\mu_{p+q}$. Then the \emph{even ordering} on $\mu_{p+q}$ is expressed~by
\begin{equation}\label{eq:ordev}
1=\xi^0 \lev \xi^{a} \lev \xi^{-a} \lev \cdots\lev\xi^{ka} \lev \xi^{-ka} \lev\cdots\lev\ximaxev,
\end{equation}
with $k\in[1,\frac{p+q}2)$, with
\[
\ximaxev=\begin{cases}
\pm\exp(\frac{a\pi i}{p+q})&\text{if $p+q$ is odd and $a$ is even ($+$) or odd ($-$)},\\
\exp(a\pi i)=-1&\text{if $p+q$ is even}.
\end{cases}
\]
The \emph{odd ordering} is the reverse ordering.

For $k\in\ZZ$ we set
\begin{equation}\label{eq:evinout}
\left\{\begin{aligned}
\evin(k)&:=\left[\frac{qk}{p+q}+\frac{1}{2} \right],\\
\evout(k)&:=k-\evin(k)=\left\lceil\frac{pk}{p+q}-\frac{1}{2} \right\rceil.
\end{aligned}
\right.
\end{equation}
Both are increasing functions of $k$. If $k\in\{0,\dots, p+q-1\}$, we have $\evin(k)\in\{0,\dots,q\}$ and $\evout(k)\in\{0,\dots,p\}$. Furthermore, let
\begin{equation}\label{eq:minin}
\evminin(k):=\min \Big\{k' \ge -\frac{p+q}{2q} \mid \evin(j)= \evin(k) \text{ for all } j \in [k',k] \Big\}
\end{equation}
and
\begin{equation}\label{eq:maxout}
\evmaxout(k):=\max \Big\{k'' < p+q + \frac{p+q}{2p} \mid \evout(j) \!=\!\evout(k) \text{ for all } j \!\in\! [k,k''] \Big\}
\end{equation}
(see Remark \ref{rem:cev}\eqref{rem:cev3} for an explanation). In a similar way we set for $k\in\ZZ$:
\begin{equation}\label{eq:oddinout}
\left\{\begin{aligned}
\oddin(k)&:=\left[\frac{qk}{p+q}\right],\\
\oddout(k)&:=k-1-\oddin(k)=\left\lceil\frac{pk}{p+q}-1\right\rceil\in\{-1,\dots,p-1\},
\end{aligned}
\right.
\end{equation}
and if $k\in\{0,\dots,p+q-1\}$ we have $\oddin(k)\in\{0,\dots,q-1\}$ and $\oddout(k)\in\{-1,\dots,p-1\}$. We define $\oddminin(k)$ and $\oddmaxout(k)$ literally as in \eqref{eq:minin} and \eqref{eq:maxout} replacing ``ev'' by ``odd''. Note that
$$
\Big[\frac{qk}{p+q}-\frac{1}{2}, \frac{qk}{p+q}\Big]\cap\N=\emptyset \Longleftrightarrow \evin(k)=\oddin(k)+1.
$$

Let $\bV $ be a finite dimensional complex vector space together with an automorphism $\bT\in\Aut(\bV)$. Let us denote by ${\bf1}_k$ the complex vector space of rank one with chosen basis element $1_k$. Let us consider direct sum $\bV^{p+q}$ of $p+q$ copies of $\bV $ and let us keep track of the indices by writing
\begin{equation}\label{eq:bVpq}
\bV^{p+q}=\bigoplus_{k=0}^{p+q-1}\bV \otimes_{\C} {\bf1}_k.
\end{equation}
\begin{convention}\label{conv:alphaT}
For any $j\in\ZZ$, we will write
$$
v \otimes 1_j :=
\bT^{-s}(v) \otimes 1_{j+s(p+q)} \text{ for } s\in\Z \text{ such that } j+s(p+q)\in [0, p+q-1].
$$
\end{convention}

\pagebreak[2]
\begin{definition}\label{def:L}\mbox{}\par
\noindent$\cbbullet$ Assume $p+q \ge 3 $. Then:
\begin{enumerate}
\item
\label{def:L01}
The isomorphism $\sigma_{\ev}^{\odd}:\bigoplus_{k=0}^{p+q-1}\bV \otimes{\bf1}_k\to\bigoplus_{k=0}^{p+q-1}\bV \otimes{\bf1}_k$ is defined as
\begin{starequation}
\label{eq:defL01}
\sigma_{\ev}^{\odd}(v \otimes 1_k) :=
\begin{cases}
v \otimes 1_k, & \text{if } [\frac{qk}{p+q}-\frac{1}{2}, \frac{qk}{p+q}]\cap\N=\emptyset \\
v \otimes 1_{\oddminin(k)-1}\\
\hspace*{4mm}{}- v \otimes 1_{\oddmaxout(\oddminin(k))}\\
\hspace*{12mm}{}+v \otimes 1_{\oddmaxout(k)+1},
& \text{if } [\frac{qk}{p+q}-\frac{1}{2}, \frac{qk}{p+q}]\cap\N\neq\emptyset
\end{cases}
\end{starequation}

\item
\label{def:L12}
The isomorphism $\sigma^{\ev}_{\odd}:\bigoplus_{k=0}^{p+q-1}\bV \otimes{\bf1}_k\to\bigoplus_{k=0}^{p+q-1}\bV \otimes{\bf1}_k$ is defined as
\begin{starstarequation} \label{eq:defL12}
\sigma^{\ev}_{\odd}(v \otimes 1_k) :=
\begin{cases}
v \otimes 1_k,& \text{if } [\frac{qk}{p+q}-\frac{1}{2}, \frac{qk}{p+q}]\cap\N\neq\emptyset \\
v \otimes 1_{\evminin(k)-1}\\
\hspace*{4mm}{} - v \otimes 1_{\evmaxout(\evminin(k))}\\
\hspace*{12mm}{}+v \otimes 1_{\evmaxout(k)+1},& \text{if } [\frac{qk}{p+q}-\frac{1}{2}, \frac{qk}{p+q}]\cap\N=\emptyset
\end{cases}
\end{starstarequation}
\end{enumerate}

\noindent$\cbbullet$
If $p=q=1 $, the isomorphism $\sigma_{\ev}^{\odd}$ is defined as
\begin{equation}\label{eq:defpq1i}
\sigma_{\ev}^{\odd}(v \otimes 1_0) := - v \otimes 1_0+(\id+T^{-1}) v \otimes 1_1 \quad\text{and} \quad
\sigma_{\ev}^{\odd}(v \otimes 1_1) := v \otimes 1_1,
\end{equation}
and $\sigma_{\odd}^{\ev}$ is defined as
\begin{starequation}\label{eq:defpq1ii}
\sigma^{\ev}_{\odd}(v \otimes 1_0) := v \otimes 1_0 \quad\text{and} \quad \sigma^{\ev}_{\odd}(v \otimes 1_1):= (\id+T^{-1})v \otimes 1_0 - v \otimes 1_1.
\end{starequation}
\end{definition}

We are now ready to define the following particular set of Stokes data.

\begin{definition}\label{def:standstokesintro}
The standard linear Stokes data $\fS^\std(\bV, \bT,p,q)$ are given by
\begin{enumerate}
\item\label{def:standstokesintro1}
the vector spaces $\bLMH_\ell:=\bV^{p+q}$ for $\ell=0,\dots, 2q-1$,
\item\label{def:standstokesintro2}
for each $\mu =0,\dots, q-1$, the isomorphisms
\begin{align*}
S_{2 \mu-1}^{2 \mu}:=\sigma^{\ev}_{\odd}: \bLMH_{2 \mu-1} &\isom \bLMH_{2 \mu}
\quad\text{(\cf\eqref{eq:defL12})},\\
S^{2 \mu +1}_{2 \mu}:=\sigma_{\ev}^{\odd}: \bLMH_{2 \mu} &\isom \bLMH_{2 \mu +1}
\quad\text{(\cf\eqref{eq:defL01})}.
\end{align*}
\item\label{def:standstokesintro3}
the isomorphism $S_{2q-1}^{2q} :=\diag(\bT,\dots, \bT) \cdot \sigma^{\ev}_{\odd}: \bLMH_{2q-1} \isom \bLMH_0$.
\item\label{def:standstokesintro4}
the filtrations $F_k\bLMH_{2 \mu} :=\bigoplus_{k' \leq k}\bV\otimes{\bf1}_{k'}$ and $F^k\bLMH_{2 \mu +1} :=\bigoplus_{k'\geq k}\bV\otimes{\bf1}_{k'}$.
\end{enumerate}
\end{definition}

Note that the opposedness property \eqref{eq:opposite1intro} for the above data is not obvious. We will not give a direct proof since it follows a posteriori from our main result by which these data are the Stokes data attached to some Stokes structure. The arguments in the proof of Theorem \ref{thm:mainintro} also lead to the following explicit computation of the topological monodromy of $\wh\rho^+\ccF^{(0,\infty)}(\ccM)$, where we set
\begin{equation}\label{eq:ink}
\mathrm{in}(k):=\evin(k)=\left[\frac{qk}{p+q}+\frac12\right], \quad \max\nolimits_\mathrm{out}(k):=\evmaxout(k).
\end{equation}

\begin{proposition} \label{prop:topmonointro}
The topological monodromy of $\wh\rho^+\ccF^{(0,\infty)}(\ccM)$ is conjugate to the automorphism of the vector space $\bV^{p+q}=\bigoplus_{k=0}^{p+q-1}\bV \otimes_\C {\bf1}_k$ (using the notation \eqref{eq:bVpq} and Convention \ref{conv:alphaT}) given by
$$
\wh T_\top(v\otimes1_k)
=\begin{cases}
v\otimes1_{k+1}& \text{if }\mathrm{in}(k+1)=\mathrm{in}(k),\\
v\otimes(1_k-1_{\max_\mathrm{out}(k)}+1_{\max_\mathrm{out}(k)+1})& \text{if }\mathrm{in}(k+1)=\mathrm{in}(k)+1.
\end{cases}
$$
\end{proposition}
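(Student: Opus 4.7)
The plan is to exploit Theorem \ref{thm:mainintro} to replace the Stokes data of $\wh\rho^+\ccF^{(0,\infty)}(\ccM)$ by the standard model $\fS^\std(\bV,\bT,p,q)$, and then to read off the topological monodromy from this explicit model. For any Stokes-filtered local system on $\SS^1_{\eta=0}$, the topological monodromy of the underlying local system, viewed as an automorphism of the reference fiber $\bLMH_0$, is obtained as the composition of all the Stokes isomorphisms taken once around the circle in cyclic order,
\[
\wh T_\top \;=\; S_{2q-1}^{2q}\circ S_{2q-2}^{2q-1}\circ\cdots\circ S_1^2\circ S_0^1 \qquad (\bLMH_{2q}=\bLMH_0).
\]
Substituting Definition \ref{def:standstokesintro}, this becomes
\[
\wh T_\top \;=\; \diag(\bT,\dots,\bT)\circ\bigl(\sigma^{\ev}_{\odd}\circ\sigma_{\ev}^{\odd}\bigr)^{q},
\]
and it remains to check that this endomorphism of $\bV^{p+q}$ is conjugate to the one described in the statement.

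The second step is a combinatorial unwinding of the iterated composition on each basis vector $v\otimes 1_k$. The dichotomy in Definition \ref{def:L} is controlled by the predicate $[qk/(p+q)-1/2,\,qk/(p+q)]\cap\N=\emptyset$, which is precisely equivalent to $\evin(k+1)=\evin(k)$, \ie to $\mathrm{in}(k+1)=\mathrm{in}(k)$. When this holds, $\sigma^{\ev}_{\odd}\circ\sigma_{\ev}^{\odd}$ acts as the identity near the orbit of $v\otimes 1_k$, so that after $q$ iterations and multiplication by $\diag(\bT,\dots,\bT)$ one obtains, up to a conjugation to be made simultaneously for all $k$, the simple translation $v\otimes 1_k\mto v\otimes 1_{k+1}$ within the current level set of $\evin$. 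When $\mathrm{in}(k+1)=\mathrm{in}(k)+1$ the three-term expression of \eqref{eq:defL01}--\eqref{eq:defL12} enters, and the auxiliary functions $\evminin$ and $\evmaxout$ are precisely engineered so that the alternating compositions telescope to $v\otimes\bigl(1_k-1_{\max_\mathrm{out}(k)}+1_{\max_\mathrm{out}(k)+1}\bigr)$, the other contributions cancelling thanks to the opposedness property \eqref{eq:opposite1intro}.

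The main obstacle is to make this conjugation fully explicit. The operator $\wh T_\top$ as a $q$-fold composition naively mixes many basis vectors, whereas the proposition records only a local shift-with-correction rule per basis vector. I would construct a block-triangular change of basis adapted to the partition of $\{0,\dots,p+q-1\}$ into the level sets of $k\mto\mathrm{in}(k)$: within a single level the transformation telescopes to a cyclic shift, at the boundary between two consecutive levels the correction involving $\max_\mathrm{out}(k)$ survives, and the factor $\diag(\bT,\dots,\bT)$ is absorbed via Convention \ref{conv:alphaT} when the orbit wraps around past $k=p+q-1$. Verifying by induction on $k$ that this telescoping produces exactly the two cases of the proposition, and no spurious correction terms, is the most delicate combinatorial step.
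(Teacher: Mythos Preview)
Your proposal takes a route the paper explicitly mentions but deliberately avoids. Remark \ref{rem:topmdrmy} records the formula $\wh T_\top=S_{2q-1}^{2q}\circ\cdots\circ S_0^1$ you start from, and then says that the direct computation of \S\ref{subsec:topmono} ``is less involved than the detour over the Stokes data''. That direct computation works on $\wt\PP^1_u$ without using the blow-up of $(0,\wh\infty)$: the fiber of $\wh M$ at $\tau_o$ is identified with $H^1_\mathrm{c}(\evB,\cK)\simeq H_1^\BM(\oddB,\cK)$, a basis of Borel--Moore cycles is given by the curves $\evgamma_k$, and the monodromy is realized geometrically as the isotopy rotating each outer interval $\evDout_n$ to $\evDout_{n+1}$. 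The formula is then read off by comparing $\evgamma_k$ with its image $\evtgamma_k$ (Figures \ref{leraymdrmy} and \ref{leraymdrmy2}): when $\mathrm{in}(k+1)=\mathrm{in}(k)$, the curves $\evtgamma_k$ and $\evgamma_{k+1}$ connect the same boundary intervals, giving the shift; otherwise $\evtgamma_k$ is rewritten as the three-term combination involving $\max_\mathrm{out}(k)$.

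Your proposal also contains a concrete error and a real gap. You assert that $[qk/(p+q)-1/2,\,qk/(p+q)]\cap\N=\emptyset$ is equivalent to $\mathrm{in}(k+1)=\mathrm{in}(k)$. It is not: the paper states just below \eqref{eq:oddinout} that this predicate is equivalent to $\evin(k)=\oddin(k)+1$, a different condition. For $p=4$, $q=3$, $k=0$ the interval $[-1/2,0]$ meets $\N$, yet $\mathrm{in}(0)=\mathrm{in}(1)=0$. More importantly, the heart of your argument---that $\diag(\bT,\dots,\bT)\circ(\sigma^{\ev}_{\odd}\circ\sigma_{\ev}^{\odd})^q$ is conjugate to the stated map---is not carried out but only announced as ``the most delicate combinatorial step''. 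Each $\sigma$ already produces three-term expressions, and a $q$-fold iterate spreads a single basis vector over many summands; the telescoping you invoke is far from automatic, and the opposedness \eqref{eq:opposite1intro} constrains filtrations, not cancellations inside this product. The paper's Borel--Moore approach bypasses all of this combinatorics.
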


\begin{remarque}\label{rem:topmdrmy}
The topological monodromy can of course be deduced from the Stokes data by the formula$$
\Ttop :=S_{2q-1}^0 \circ S_{2q-2}^{2q-1} \circ \cdots \circ S_0^1\in\Aut(\bLMH_0).
$$
However, we will give a more direct computation in \S\ref{subsec:topmono}. This approach is less involved than the detour over the Stokes data and could perhaps be applied in more general situations without having to understand the full information of the Stokes data.
\end{remarque}

The article is organized as follows. Section \ref{sec:StData} recalls the notion of Stokes-filtered local system and explains the correspondence with that of linear Stokes data -- generalized monodromy data -- describing Stokes structures of pure level $q$. In Section~\ref{sec:topLaplace} the construction of the topological Laplace transformation is discussed in general, and the main tool (Th.\,\ref{th:pushforward}) is \cite[Cor\ptbl4.7.5]{Mochizuki10}. We then concentrate on the case of an elementary meromorphic connection. A descrip\-tion of the Stokes data for $\wh\rho^+\ccF^{(0,\infty)}(\ccM)$ in cohomological terms is proved in the subsequent section -- Theorem \ref{theo:Stokesdataabstract}.

The remaining sections provide the proof that these data are isomorphic to the standard linear Stokes data of Definition \ref{def:standstokesintro}. Section \ref{sec:support} identifies the filtered vector spaces $(\bLMH_\ell,F\bLMH_\ell)_\ell$ obtained by the cohomological computation of Theorem \ref{theo:Stokesdataabstract} with the filtered vector spaces entering in the standard linear Stokes data. This is done by using Morse theory. However, the Morse-theoretic description of the linear Stokes data expressed through Theorem \ref{theo:Stokesdataabstract} does not seem suitable to compute the Stokes matrices~$S_\ell^{\ell+1}$. This is why, in Section \ref{sec:standard}, we construct a simple Leray covering giving rise to a basis of the cohomology space $\bLMH_\ell$ for each $\ell$, which allows us to identify the matrices $S_\ell^{\ell+1}$ given by Theorem \ref{theo:Stokesdataabstract} to those given by the standard linear Stokes data.

It remains to compare the Stokes filtration obtained in Theorem \ref{theo:Stokesdataabstract} with that obtained from the standard linear Stokes data. The problem here is that the simple Leray covering constructed in Section \ref{sec:standard} may not be adapted to the Morse-theoretic computation. In Section \ref{finalsection} we change the construction of the Leray covering in order both to keep the same combinatorics to compute the matrices $S_\ell^{\ell+1}$ and to identify the filtration of Theorem \ref{theo:Stokesdataabstract} with the standard Stokes filtration \ref{def:standstokesintro}\eqref{def:standstokesintro4}. This is the contents of Corollary \ref{cor:isofil}, which concludes the proof of Theorem \ref{thm:mainintro}.

\subsubsection*{Acknowledgements}
The first author thanks Giovanni Morando for useful discussions. The second author thanks Takuro Mochizuki for many discussions on the subject. Both authors thank the referee for having carefully read the manuscript and for having proposed various improvements.

\newpage
\section{Stokes-filtered local systems and Stokes data} \label{sec:StData}

\subsection{Reminder on Stokes-filtered local systems}\label{subsec:Stokesfilteredls}
We refer to \cite{Deligne77}, \cite{Malgrange91} and \cite[Chap\ptbl2]{Bibi10} for more details.

Let $\varpi:\wt\Afut\to\Afut$ be the real blowing-up of the origin in $\Afut$. Then $\wt\Afut=\Afut{}^*\cup\SS^1_{t=0}$ is homeomorphic to a semi-closed annulus, with $\SS^1_{t=0}:=\varpi^{-1}(0)\subset\wt\Afut$. Similarly, we consider $\wt\Afuu$, and $\rho$ extends as the $p$-fold covering $\wt\rho:\SS^1_{u=0}\to\SS^1_{t=0}$. For any $\vt\in\SS^1_{u=0}$, the order on $u^{-1}\CC[u^{-1}]$ at $\vt$ is the additive order defined by
\begin{multline}\label{eq:leqvt}
\vi\leqvt0\iff\\
\text{$\exp(\vi(u))$ has moderate growth in some open sector centered at $\vt$.}
\end{multline}
We set
\begin{equation}\label{eq:levt}
\vi\levt0\iff\vi\leqvt0\text{ and }\vi\neq0.
\end{equation}
This is equivalent to $\exp(\vi(u))$ having rapid decay in some open sector centered at~$\vt$.

\begin{definition}[{see \eg\cite[Chap.\,2]{Bibi10}}]\label{def:stokesfiltlocsyst}
A Stokes-filtered local system with ramification $\rho$ consists of a local system $\cL$ on $\SS^1_{t=0}$ such that $\wt\rho^{-1}\cL$ is equipped with a family of subsheaves $\cL_{\leq\vi}\subset\wt\rho^{-1}\cL$ indexed by a finite subset $\Exp\subset u^{-1}\CC[u^{-1}]$ with the following properties:
\begin{enumerate}
\item\label{def:stokesfiltlocsyst1}
For each $\vt\in\SS^1_{u=0}$, the germs $\cL_{\leq\vi,\vt}$ ($\vi\in\Exp$) form an increasing filtration of $(\wt\rho^{-1}\cL)_\vt=\cL_{\wt\rho(\vt)}$.
\item\label{def:stokesfiltlocsyst2}
Set $\cL_{<\vi,\vt}=\sum_{\psi\levt\vi}\cL_{\leq\psi,\vt}$, where the sum is taken in $\cL_{\wt\rho(\vt)}$ (and the sum indexed by the empty set is zero). Then $\cL_{<\vi,\vt}$ is the germ at $\vt$ of a subsheaf $\cL_{<\vi}$ of~$\cL_{\leq\vi}$.
\item\label{def:stokesfiltlocsyst3}
The filtration is exhaustive, \ie $\bigcup_{\vi\in\Exp}\cL_{\leq\vi}=\wt\rho^{-1}\cL$ and $\bigcap_{\vi\in\Exp}\cL_{<\vi}=0$.
\item\label{def:stokesfiltlocsyst4}
Each quotient $\gr_\vi\cL:=\cL_{\leq\vi}/\cL_{<\vi}$ is a local system.
\item\label{def:stokesfiltlocsyst5}
$\sum_{\vi\in\Exp}\rk\gr_\vi\cL=\rk\cL$.
\item\label{def:stokesfiltlocsyst6}
For each $\zeta\in\mu_p$ and $\vi\in\Exp$, setting $\vi_\zeta(u):=\vi(\zeta u)$, we have $\cL_{\leq\vi_\zeta}\simeq\wt\zeta^{-1}\cL_{\leq\vi}$ through the equivariant isomorphism $\wt\rho^{-1}\cL\simeq\wt\zeta^{-1}\wt\rho^{-1}\cL$, where $\wt\zeta:\SS^1_{u=0}\to\SS^1_{u=0}$ is induced by the multiplication by $\zeta$.
\end{enumerate}
\end{definition}

\begin{remarque}
It is equivalent to start with a family of subsheaves $\cL_{<\vi}\subset\wt\rho^{-1}\cL$ such that $\cL_{<\vi,\vt}\subset\cL_{<\psi,\vt}$ if $\vi\leqvt\psi$, and to define $\cL_{\leq\vi}$ by the formula $\cL_{\leq\vi,\vt}=\bigcap_{\psi,\,\psi\levt\vi}\cL_{<\psi,\vt}$ with the convention that the intersection indexed by the empty set is~$\cL_\vt$. The family $\cL_{<\vi}$ will be simpler to obtain in our computation of Laplace transform.
\end{remarque}

\begin{remarque}[Extension of the index set]\label{rem:extensionindex}
It is useful to extend the indexing set of the filtration and to define the subsheaves $\cL_{\leq\psi}\subset\wt\rho^{-1}\cL$ for any $\psi\in u^{-1}\CC[u^{-1}]$. For such a $\psi$ and for any $\vi\in\Exp$, we denote by $\SS^1_{\vi\leq\psi}$ the open subset of $\SS^1_{u=0}$ defined~by
\[
\vt\in\SS^1_{\vi\leq\psi}\iff\vi\leqvt\psi.
\]
We shall abbreviate by $(\beta_{\vi\leq\psi})_!$ the functor on sheaves composed of the restriction to this open set and the extension by zero from this open set to $\SS^1_{u=0}$. Then $\cL_{\leq\psi}$ is defined by the formula
\begin{starequation}\label{eq:extindex}
\cL_{\leq\psi}=\sum_{\vi\in\Exp}(\beta_{\vi\leq\psi})_!\cL_{\leq\vi}.
\end{starequation}%
The germ of $\cL_{\leq\psi}$ at any $\vt\in\SS^1_{u=0}$ can also be written as
\begin{starstarequation}\label{eq:extindexx}
\cL_{\leq\psi,\vt}=\sum_{\substack{\vi\in\Exp\\\vi\leqvt\psi}}\cL_{\leq\vi,\vt},
\end{starstarequation}%
with the convention that the sum indexed by the empty set is zero. Note also that, if $\psi$ is not ramified, \ie $\psi\in t^{-1}\CC[t^{-1}]$, then $\cL_{\leq\psi}$ is a subsheaf of $\cL$. A similar definition holds for $\cL_{<\psi}$, but one checks that $\cL_{<\psi}=\cL_{\leq\psi}$ if $\psi\notin\Exp$. Of particular interest will be $\cL_{\leq0}\subset\cL$.
\end{remarque}

\subsection{Stokes data of pure level \texorpdfstring{$q$}{q}} \label{sec:Stokesdata}

In this section, we will define the notion of Stokes data attached to a given Stokes structure $(\cL, \cL_\bbullet)$, which -- after various choices to be fixed a priori -- describes the Stokes filtration in terms of opposite filtrations on a generic stalk. After choosing appropriate basis, the passage from one filtration to the other could be expressed by matrices. These matrices are usually referred to as the {\em Stokes matrices} or {\em Stokes multipliers} in different approaches to Stokes structures (see e.g. \cite{B-J-L79}). In level $q=1$, the following description has already been given in \cite{H-S09}, section 2.b.

We will restrict to the case of an unramified Stokes-filtered local system, \ie we assume that, in Definition \ref{def:stokesfiltlocsyst}, $\rho$ is equal to $\id$, and we will work with the variable~$u$. We will set $\SS^1=\SS^1_{u=0}$. We identify $\SS^1=\R/ 2\pi \Z $ and will call an element $\vt\in\SS^1$ an \angle.

The description of these data by combinatorial means will highly depend on the various pole orders of the factors $\vi\in\Exp$ and their differences $\vi-\psi $ as well. The situation we have in mind as an application allows to get rid of many of these difficulties, since we will be able to restrict to a single pole order:

\begin{definition}
We say that a finite subset $E \subset u^{-1} \C[u^{-1}] $ is of pure level~$q$~if
\begin{itemize}
\item
the pole order of each $\vi\in E$ as well as
\item
the pole order of each difference $\vi-\psi $ for $\vi,\psi\in E$, $\vi\neq \psi $,
\end{itemize}
equals $q$. We say that an unramified Stokes structure $(\cL, \cL_\bbullet)$ is pure of level $q$ if the associated set $\Exp$ of exponential factors is pure of level $q$.
\end{definition}

Let $E \subset u^{-1} \C[u^{-1}] $ be a finite subset of pure level $q$. In particular, there are exactly~$2q$ Stokes directions for any pair $\vi\neq \psi $ in $E$, where a Stokes direction is an \angle $\vt\in\SS^1$ at which the two factors $\vi, \psi $ do not satisfy one of the two inequalities $\vi \le_\vt \psi $ or $\psi \le_\vt \vi $. More precisely, if
$$
(\psi-\vi)(u)=u^{-q} \cdot g(u)
$$
with a polynomial $g(u)$ with $g(0)\neq 0$, the Stokes directions are
$$
\tSt(\vi,\psi) :=\Big\{\vt\in\SS^1 \mid -q \vt+\arg(g(0)) \equiv \frac{\pi}{2} \text{ or } \frac{3\pi}{2} \bmod 2\pi\Big\}.
$$
At these directions, the asymptotic behaviour of $\exp(\vi-\psi)$ changes from rapid decay to rapid growth and conversely.

Let $\StDir(E)$ denote the set of all Stokes directions $\StDir(E) :=\bigcup_{\vi\neq \psi\in E} \tSt(\vi,\psi)$.
An \angle $\vt_o\in\SS^1$ is said to be \emph{generic with respect to $E$} if
\begin{equation}\label{eq:genericwrtE}
\Big\{\vt_o+\frac{\ell\pi}{q} \mid \ell\in\Z\Big\}\cap\StDir(E)=\emptyset.
\end{equation}
Let us fix a generic $\vt_o$ and let us set $\vt_\ell:=\vt_o+\sfrac{\ell\pi}{q}$ ($\ell=0,\dots,2q-1$). Then each of the open intervals $(\vt_\ell, \vt_{\ell+1})$ contains exactly one Stokes direction for each pair $\vi\neq \psi $ in $E$. For $\ve >0$ small enough, the same holds for the intervals
$I_\ell :=(\vt_\ell-\ve, \vt_{\ell+1}+\ve)$ with which we cover the circle $\SS^1=\bigcup_{\ell=0}^{2q-1} I_\ell$.

The category of unramified linear Stokes data of pure level $q$ indexed by the finite set $\{1,\dots,r\}$ over the field $\CC$ is defined by \eqref{eq:linearStokesData}. A morphism of two linear Stokes data of this type is given by $\CC$\nobreakdash-linear maps between the vector spaces which commute with all the $S_\ell^{\ell+1}$ and respect the filtrations. We denote this category by $\Stdat(q,r)$. A similar definition can be given over any field $\fldk$.

\medskip
We now fix a finite set $E \subset u^{-1} \C[u^{-1}] $ of pure level $q$. Let $\Ststr(E)$ be the category of unramified Stokes structures $(\cL,\cL_\bbullet)$ on $\SS^1$ with $\Exp=E$. After choosing an appropriate \angle $\vt_o$ as above, we construct a functor
\begin{equation}\label{eq:Ststrdat}
\Phi_{E,\vt_o}: \Ststr(E)\to\Stdat(q,\#E)
\end{equation}
as follows.

We can arrange the elements in $E$ according to the ordering \eqref{eq:leqvt} with respect to the generic direction $\vt_o$, \ie we write $E=\{\vi_1 <_{\vt_o} \vi_2<_{\vt_o} \vi_3 <_{\vt_o}\dots <_{\vt_o} \vi_{\#E}\}$. We then have the same ordering at any $\vt_{2 \mu}$ with even index and the reverse order for $\vt_{2 \mu +1}$. This follows from the fact that each inequality $\vi_i <_\vt \vi_j $ becomes reversed whenever the \angle $\vt $ passes a Stokes direction for the pair $\vi_i $, $\vi_j $, and there is exactly one Stokes direction for each pair inside $(\vt_\ell, \vt_{\ell+1})$.

We invoke the fundamental result going back to Balser, Jurkat and Lutz:
\begin{proposition}\label{prop:BJL}
For each open interval $I \subsetneq \SS^1$ of width $\sfrac{\pi}{q}+2\ve $ for $\ve>0$ small enough, there is a unique splitting
$$
\tau: \cL|_I \cong\bigoplus_{\vi\in E} \gr_\vi \cL|_I
$$
compatible with the filtrations.
\end{proposition}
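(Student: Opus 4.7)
The plan is to interpret splittings of the Stokes filtration on $I$ as sections of a torsor under a sheaf of groups on $I$ and to reduce existence and uniqueness to a cohomological vanishing. Let $\mathcal{G}$ denote the sheaf of groups on $\SS^1$ whose sections over an open $V$ are the $\CC$-linear automorphisms $g$ of $\cL|_V$ preserving $\cL_{\leq\vi,\vt}$ at every $\vt\in V$ and $\vi\in E$ and inducing the identity on each $\gr_\vi\cL$. The splittings compatible with the Stokes filtration on $I$ form a (possibly empty) torsor under $\mathcal{G}(I)$. Uniqueness is thus equivalent to $\mathcal{G}(I)=\{1\}$, and existence, given that pointwise splittings trivially exist, reduces to the vanishing of an obstruction class in $H^1$ of a suitable abelian sheaf associated to $\mathcal{G}$.

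The sheaf $\mathcal{G}$ is pro-unipotent, with a canonical descending filtration indexed by the depth of the jump $g-\mathrm{id}$ induces in the Stokes filtration, and the associated graded is a direct sum of abelian sheaves of the form $(\beta_{\psi\leq\vi})_!\,\mathrm{Hom}(\gr_\vi\cL,\gr_\psi\cL)$ indexed by ordered pairs $\vi\ne\psi$ in $E$, where $\beta_{\psi\leq\vi}$ denotes the open inclusion $\SS^1_{\psi\leq\vi}\hookrightarrow\SS^1$ of Remark \ref{rem:extensionindex}. Since $E$ is of pure level $q$, each $\SS^1_{\psi\leq\vi}$ is a disjoint union of $q$ open arcs of width exactly $\pi/q$. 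Choosing $\varepsilon>0$ small enough that $I$ contains exactly one Stokes direction per pair (possible since $\StDir(E)$ is finite), the intersection $\SS^1_{\psi\leq\vi}\cap I$ is, for each pair, a nonempty proper open subinterval $I^{\psi,\vi}\subsetneq I$ sharing one endpoint with $I$ and terminating at the Stokes direction in the interior of $I$.

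The key local computation is then that for a constant sheaf $F$ of finite-dimensional vector spaces on such a half-open subinterval $I'\subsetneq I$, one has $H^0(I,j_!F)=H^1(I,j_!F)=0$, where $j:I'\hookrightarrow I$. This follows from the short exact sequence
\[
0\to j_!F\to F_I\to i_*(F_I|_Z)\to 0,
\]
where $i:Z:=I\setminus I'\hookrightarrow I$ is the closed complement, itself a contractible half-open interval: both $F_I$ and $i_*(F_I|_Z)$ are acyclic on $I$ with identical global sections $F$, and the restriction map on $H^0$ is the identity, which forces the vanishing of $H^0$ and $H^1$ of $j_!F$. Applying this to every summand of the associated graded of $\mathcal{G}$ and iterating along the filtration yields simultaneously $\mathcal{G}(I)=\{1\}$ and the vanishing of all $H^1$-obstructions, giving both uniqueness and existence of the splitting. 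The main technical subtlety is the identification of the associated graded of $\mathcal{G}$ with the stated direct sum: this requires carefully tracking how the local orderings $\leq_\vt$ flip as $\vt$ crosses Stokes directions, a task that is manageable precisely because $E$ is of pure level $q$, so that the $2q$ flips associated to each pair occur at evenly spaced directions.
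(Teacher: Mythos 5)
Your proof follows the standard torsor-and-obstruction argument behind the Balser--Jurkat--Lutz/Malgrange statement; since the paper simply cites those references, you are supplying the argument they appeal to, and the overall strategy (a sheaf of unipotent automorphisms $\mathcal{G}$, uniqueness from $H^0=\{1\}$, existence from the vanishing of an $H^1$-obstruction) is the right one. Two points, however, are not correct as written.

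The restriction to intervals $I$ containing exactly one Stokes direction per pair is not something you can arrange ``by choosing $\ve$ small enough'': for any fixed $\ve>0$, there are intervals of width $\pi/q+2\ve$ that straddle two consecutive Stokes directions of a pair, so that $\SS^1_{\psi\leq\vi}\cap I$ is a \emph{relatively compact} open subinterval of $I$. In that case your short exact sequence gives $F\to F\oplus F$ with cokernel $F$, so $H^1(I,j_!F)\neq0$, and the conclusion genuinely fails: already for $q=1$, $\#E=2$, $\cL$ of rank $2$, a Stokes structure whose monodromy does not preserve the line $\cL_{\leq\vi_1,\vt}$ at a Stokes direction $\vt$ admits no filtered splitting on such an $I$. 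So ``exactly one Stokes direction per pair'' is a real hypothesis on the position of $I$, not a harmless normalization. The paper implicitly secures it by choosing the generic $\vt_o$ and only ever invoking the proposition for the resulting intervals $I_\ell=(\vt_\ell-\ve,\vt_{\ell+1}+\ve)$, which are built to contain exactly one Stokes direction per pair; your argument is correct precisely for those intervals, but the parenthetical ``(possible since $\StDir(E)$ is finite)'' misstates why.

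Separately, when $\#E\geq3$ the ``depth of the jump'' filtration of $\mathcal{G}$ does not have the associated graded you assert. The depth assigned to a pair $(\vi,\psi)$ is read from the local total order $\leq_\vt$, which re-shuffles every time $\vt$ crosses a Stokes direction for some \emph{other} pair, so the same summand $\mathrm{Hom}(\gr_\vi\cL,\gr_\psi\cL)$ lives at different depths on different subintervals; the quotient sheaf in a fixed depth then has non-injective germ maps at Stokes directions and is not a direct sum of $(\beta_{\psi\leq\vi})_!\mathrm{Hom}(\gr_\vi\cL,\gr_\psi\cL)$. You label this the ``main technical subtlety'' and declare it manageable, and indeed the statement is true, but the resolution is not bookkeeping about pure level $q$: it requires either a composition series for $\mathcal{G}$ different from the naive depth filtration whose subquotients really are these extensions by zero, or a direct \v{C}ech-type argument with a carefully tailored cover. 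That is precisely the content supplied by the references the paper cites, and it is the step your outline leaves unproved.
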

\begin{proof}
See \cite[Lem.\,5.1]{Malgrange83bb} or \cite[Th.\,A]{B-J-L79}.
\end{proof}

In particular, writing $\gr\cL :=\bigoplus_{\vi\in E} \gr_\vi \cL $ for the associated graded sheaf endowed with the Stokes filtration naturally induced by using the order \eqref{eq:leqvt}, we have unique filtered isomorphisms
\begin{equation}\label{eq:trivIk}
\tau_\ell: (\cL, \cL_\bbullet)|_{I_\ell} \isom (\gr\cL, (\gr\cL)_\bbullet)|_{I_\ell}
\end{equation}
over the intervals $I_\ell$ chosen above.

For each $\ell=0,\dots, 2q-1$, we let $L^{(\ell)}:=\Gamma(I_\ell, \cL)$ be the sections of the local system $\cL$ over $I_\ell$. We have canonical isomorphisms
$$
\fra_\ell: L^{(\ell)} \isom \cL_{\vt_\ell} \quad\text{and}\quad
\frb_\ell: L^{(\ell)} \isom \cL_{\vt_{\ell+1}}
$$
between $L^{(\ell)}$ and the corresponding stalks of $\cL$. Writing $\bLMH_\ell:=\cL_{\vt_\ell}$ for the stalk at~$\vt_\ell$, we define
$$
S_\ell^{\ell+1}:=\frb_\ell^{-1} \circ \fra_\ell: \bLMH_\ell \isom \bLMH_{\ell+1}.
$$
We call this isomorphism the clockwise analytic continuation. Additionally, we define, for $\mu =0,\dots,q-1$,
\[
\begin{cases}
F_j\bLMH_{2 \mu} :=(\cL_{\le \vi_j})_{\vt_{2 \mu}} \subset \bLMH_{2 \mu},\\
F^j\bLMH_{2 \mu +1} :=(\cL_{\le \vi_j})_{\vt_{2 \mu +1}} \subset \bLMH_{2 \mu +1}.
\end{cases}
\]
Then the data
$$
\bigl((\bLMH_\ell)_\ell, (S_\ell^{\ell+1})_\ell, (F\bLMH_\ell)_\ell\bigr)
$$
define a set of linear Stokes data as in \eqref{eq:linearStokesData}.

\begin{proposition}
Given a finite set $E$ of pure level $q$ and a generic $\vt_o\in\SS^1$, the construction above defines an equivalence of categories
$$
\Phi_{E, \vt_o}: \Ststr(E)\to\Stdat(q,\#E)
$$
between the Stokes structures with exponential factors $E$ and the Stokes data of pure level $q$ indexed by $\{1,\dots,\#E\}$.
\end{proposition}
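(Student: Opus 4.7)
The plan is to construct an explicit quasi-inverse $\Psi_{E,\vt_o}\colon\Stdat(q,\#E)\to\Ststr(E)$ and to verify that both compositions are naturally isomorphic to the identities. Given linear Stokes data $\bigl((\bLMH_\ell),(S_\ell^{\ell+1}),(F\bLMH_\ell)\bigr)$, I would first build the underlying local system $\cL$ on $\SS^1$ by taking the constant sheaf with value $\bLMH_\ell$ on each interval $I_\ell$ and gluing adjacent ones on the arc $I_\ell\cap I_{\ell+1}$ around $\vt_{\ell+1}$ via $S_\ell^{\ell+1}$. Since the $I_\ell$ cover $\SS^1$ with no triple overlaps for $\ve$ small, the only cyclic consistency needed is the identification $\bLMH_{-1}=\bLMH_{2q-1}$, which is built into the definition of Stokes data.

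To recover the Stokes filtration on each $I_\ell$, I would exploit the opposedness condition \eqref{eq:opposite1intro}. On $\bLMH_{2 \mu}$, the pair consisting of $F_\bullet\bLMH_{2 \mu}$ (the filtration at $\vt_{2 \mu}$) and $(S_{2 \mu}^{2 \mu+1})^{-1}(F^\bullet\bLMH_{2 \mu+1})$ (the filtration at $\vt_{2 \mu+1}$ transported back) is by hypothesis opposite, hence determines a canonical splitting $\bLMH_{2 \mu}\simeq\bigoplus_{j=1}^{\#E}\bV_j$, and symmetrically on odd intervals. For $\vt\in I_\ell$ I would set $(\cL_{\leq\vi_j})_\vt$ to be the sum of those summands $\bV_{j'}$ indexed by $\vi_{j'}$ with $\vi_{j'}\leqvt\vi_j$. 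Purity of level $q$ guarantees that each pair $(\vi_i,\vi_{i'})$ crosses exactly one Stokes direction in $(\vt_\ell,\vt_{\ell+1})$, so this defines a constructible subsheaf on $I_\ell$ with the correct jumping locus; the remaining axioms of Definition~\ref{def:stokesfiltlocsyst} (exhaustivity, local-system gradeds, ranks summing to $\rk\cL$) are immediate from the direct sum.

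For fully faithfulness, Proposition~\ref{prop:BJL} shows that a morphism of Stokes structures is determined by its linear restrictions at the stalks at the $\vt_\ell$, with filtration compatibility translating verbatim into preservation of $F\bLMH_\ell$, and with the analytic continuation between $\vt_\ell$ and $\vt_{\ell+1}$ giving commutativity with the $S_\ell^{\ell+1}$. Conversely a morphism of linear Stokes data induces on each $I_\ell$ a morphism of constant sheaves compatible with the opposedness splittings, and these glue on overlaps by the same check that makes the filtration glue. The natural isomorphism $\Phi_{E,\vt_o}\circ\Psi_{E,\vt_o}\simeq\id$ is tautological from the construction, while for $\Psi_{E,\vt_o}\circ\Phi_{E,\vt_o}\simeq\id$ one uses the uniqueness statement in Proposition~\ref{prop:BJL} to identify the opposedness splittings on each $I_\ell$ with the BJL splittings of the original Stokes structure.

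The hard part is the verification in the second paragraph: that the locally defined pieces $\cL_{\leq\vi_j}|_{I_\ell}$ assemble into a global subsheaf of $\cL$ satisfying Definition~\ref{def:stokesfiltlocsyst}\eqref{def:stokesfiltlocsyst2}. Concretely, at each boundary point $\vt_{\ell+1}\in I_\ell\cap I_{\ell+1}$ the stalk built from the splitting on $I_\ell$ must coincide, under $S_\ell^{\ell+1}$, with the stalk built on $I_{\ell+1}$; this is exactly what the opposedness relations \eqref{eq:opposite1intro} prescribe, so the check is tight but should go through cleanly under the pure-level-$q$ combinatorics of $\StDir(E)$.
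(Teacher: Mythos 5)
Your construction is correct and is exactly the argument the paper delegates to its references (the proof in the paper is literally a citation of \cite{H-S09} and \cite{Bibi13} for $q=1$, asserting the same proof applies): build the local system by gluing constant sheaves along the $S_\ell^{\ell+1}$, recover the Stokes filtration from the opposedness splittings, and check consistency on overlaps. The overlap check at $\vt_{\ell+1}$ indeed reduces, via the dimension count $\dim\bigl(F_{j'}\cap S^{-1}(F^{j'})\bigr)=\rk\gr_{j'}$, to the two lines of \eqref{eq:opposite1intro}, as you indicate.
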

\begin{proof}
In the case $q=1$, a similar statement is proved in \cite{H-S09} or \cite{Bibi13}. The same proof holds in our situation as well.
\end{proof}

\subsection{Reminder on the local Riemann-Hilbert correspondence}
Let $\ccM$ be a finite dimensional $\CC\lpb t\rpb$-vector space with connection, and let $\rho:u\mto u^p=t$ be a ramification such that $\rho^+\ccM$ has a formal Levelt-Turrittin decomposition
\[
\CC\lpr u\rpr\otimes_{\CC\lpb u\rpb}\rho^+\ccM\simeq\bigoplus_{\vi\in\Exp(\ccM)}(\ccE^{-\vi(u)}\otimes\ccR_\vi),
\]
with $\Exp:=\Exp(\ccM)\subset u^{-1}\CC[u^{-1}]$ (we use an opposite sign $-\vi(u)$ with respect to the usual notation as it will be more convenient for the Stokes filtration). Moreover, by the uniqueness of the Levelt-Turrittin decomposition, we have, for each $\zeta\in\mu_p$ ($p$-th root of the unity)
\begin{equation}\label{eq:equivariant}
\ccR_{\vi_\zeta}=\ccR_\vi,\quad\text{with }\vi_\zeta(\eta):=\vi(\zeta\eta).
\end{equation}
It may happen that $\vi_\zeta=\vi$ for some $\vi\in\Exp$ and $\zeta\in\mu_p$.

The Riemann-Hilbert correspondence $\ccM\mto(\cL,\cL_\bbullet)$ goes as follows (see \cite{Deligne77}, \cite{B-V89}, \cite{Malgrange91}, \cite[Chap.\,5]{Bibi10}). Let us denote by $\cF$ the local system attached to $\ccM$ on a punctured neighbourhood of $\{t=0\}$, that we regard as well as a local system on the open annulus $(\Afu_t)^*$ or on the semi-closed annulus $\wt{\Afu_t}$. Similarly, extending~$\rho$ as a covering map $(\Afu_u)^*\to(\Afu_t)^*$ or $\wt{\Afu_u}\to\wt{\Afu_t}$, we define $\rho^{-1}\cF$. We also use the notation
\[
\cL:=\cF_{|\SS^1_{t=0}},\quad \wt\rho^{-1}\cL:=\cF_{|\SS^1_{u=0}}.
\]
One can attach to $\ccM$ the family of subsheaves $\cL_{\leq\vi}:=\cH^0\DR^{\rmod0}(\rho^+\ccM\otimes\nobreak\ccE^\vi)_{|\SS^1_{u=0}}$ of $\wt\rho^{-1}\cL$ ($\vi\in\Exp$), where $\DR^{\rmod0}$ denotes the de~Rham complex with coefficients in the sheaf of holomorphic functions on $\wt{\Afu_u}\moins \SS^1_{u=0}$ having moderate growth along $\SS^1_{u=0}$. The equivariance property \ref{def:stokesfiltlocsyst}\eqref{def:stokesfiltlocsyst6} is obtained through the isomorphisms $\zeta^+\rho^+\ccM=\rho^+\ccM$. Similarly, we can consider the family of subsheaves $\cL_{<\vi}:=\cH^0\DR^{\rdc0}(\rho^+\ccM\otimes\ccE^\vi)$ of $\wt\rho^{-1}\cL$ ($\vi\in\Exp$), where $\DR^{\rdc0}$ denotes the de~Rham complex with coefficients in the sheaf of holomorphic functions on $\wt{\Afu_u}\moins \SS^1_{u=0}$ having rapid decay along $\SS^1_{u=0}$.

\begin{remarque}\label{rem:Stokessheaf}
The equivalence of categories $\ccM\longleftrightarrow M$ considered in \S\ref{subsec:local} reads as follows, by introducing the notion of Stokes-filtered sheaves. A ramified Stokes-filtered sheaf on~$\wt\Afut$ is defined (in~the present setting) as the pair formed by a local system~$\cF$ on~$\wt\Afut$ and a Stokes-filtration~$\cL_\bbullet$ of its restriction $\cL:=\cF_{|\SS^1_{t=0}}$. Since $\SS^1_{t=0}$ is a deformation retract of $\wt\Afut$, giving $\cF$ is equivalent to giving $\cL$, hence the equivalence.

With this notion of Stokes-filtered sheaf, we can define a subsheaf $\cF_{\leq0}$ of $\cF$, which coincides with $\cF$ away from $\SS^1_{t=0}$, and whose restriction to $\SS^1_{t=0}$ is $\cL_{\leq0}$ (see Remark \ref{rem:extensionindex}). In the following, we will also consider the real blow-up space~$\wt\PP^1_t$ of~$\PP^1_t$ at $t=0$ and $t=\infty$, which is homeomorphic to a closed annulus, and we will still denote by~$\cF$ (\resp$\cF_{\leq0}$) the push-forward of $\cF$ (\resp$\cF_{\leq0}$) by the open inclusion $\wt\Afut\hto\wt\PP^1_t$.
\end{remarque}

\section{The topological Laplace transformation \texorpdfstring{$\ccF^{(0,\infty)}_\top$}{Ftop}}\label{sec:topLaplace}
\subsection{Reminder on the local Laplace transformation \texorpdfstring{$\ccF^{(0,\infty)}$}{F}}\label{subsec:reminderlocalLaplace}
We keep the notation as in the introduction, and we will focus on the coordinate $\tau$, so that the Laplace kernel is now $\exp(-t/\tau)$. Moreover, since we only want to deal with $\ccF^{(0,\infty)}(\ccM)$, we only consider the localized Laplace transform $\CC[\tau',\tau^{\prime-1}]\otimes_{\CC[\tau']}\Fou M$, that we will denote by $\Fou M$ from now on. We consider the following diagram:
\[
\xymatrix@=5mm{
&\Afut\times\GG_{\rmm,\tau}\ar[dl]_\pi\ar[dr]^{\wh \pi}&\\
\Afut&&\GG_{\rmm,\tau}
}
\]
The localized Laplace transform $\Fou M$ is defined by the formula
\[
\Fou M=\wh\pi_+(\pi^+M\otimes E^{-t/\tau}).
\]
We can regard the $\CC[t,\tau,\tau^{-1}]\langle\partial_t,\partial_\tau\rangle$-module $\pi^+M\otimes E^{-t/\tau}$ both as a holonomic $\cD_{\PP^1_t\times\Afu_\tau}$-module and as a meromorphic bundle with connection on $\PP^1_t\times\Afu_\tau$ with poles along the divisor $D:=D_0\cup D_\infty\cup D_{\wh\infty}$ in $\PP^1_t\times\nobreak\Afu_\tau$ (with $D_0=\{0\}\times\Afu_\tau$, $D_\infty=\{\infty\}\times\Afu_\tau$ and $D_{\wh\infty}=\PP^1_t\times\{\wh\infty\}$). It has irregular singularities along $D$. As indicated in \S\ref{subsec:local}, the germ of $\Fou M$ at $\tau=0$ only depends on the germ $\ccM$ of $M$ at $t=0$, and it is denoted by $\ccF^{(0,\infty)}(\ccM)$.

Let $\wh\rho:\eta\mto \eta^{\wh p}=\tau$ be a ramification such that $\wh\rho^+\Fou M$ is non-ramified at infinity, that is,
\[
\CC\lpr\eta\rpr\otimes_{\CC\lpb\eta\rpb}\wh\rho^+\ccF^{(0,\infty)}(\ccM)\simeq\bigoplus_{\wh\psi\in\wh\Exp(\Fou M)}(\ccE^{-\wh\psi(\eta)}\otimes\wh\ccR_{\wh\psi}),
\]
with $\wh\Exp(\Fou M)\subset \eta^{-1}\CC[\eta^{-1}]$. Then $\wh\rho^+\ccF^{(0,\infty)}(\ccM)$ is obtained through the diagram
\[
\xymatrix@=5mm{
&\Afut\times\GG_{\rmm,\eta}\ar[dl]_\pi\ar[dr]^{\wh\pi}&\\
\Afut&&\GG_{\rmm,\eta}
}
\]
by the formula
\[
\wh\rho^+\ccF^{(0,\infty)}(\ccM)=\CC\lpb\eta\rpb\otimes_{\CC[\eta,\eta^{-1}]}\wh\pi_+(\pi^+M\otimes E^{-t/\wh\rho(\eta)}).
\]
Moreover, by the uniqueness of the Levelt-Turrittin decomposition, we have, for each $\wzeta\in\mu_{\wh p}$ ($\wh p$-th root of the unity)
\[
\wh\ccR_{\wh\psi_{\wzeta}}=\wh\ccR_{\wh\psi},\quad\text{with }\wh\psi_{\wzeta}(\eta):=\wh\psi(\wzeta^{-1}\eta).
\]
The stationary phase formula of \cite{Fang07,Bibi07a} makes explicit the correspondence $(p,\Exp,(\ccR_\vi)_{\vi\in\Exp})\mto(\wh p,\wh\Exp,(\wh\ccR_{\wvi})_{\wvi\in\wh\Exp})$.

\subsection{The monodromy of \texorpdfstring{$\ccF^{(0,\infty)}\ccM$}{FlocM}}
Let us denote by $\FcL$ the local system attached to $\ccF^{(0,\infty)}\ccM$ on~$\SS^1_{\tau=0}$. The local system $\FcL$ can equivalently be regarded as a local system $\FcF$ on~$\GG_{\rmm,\tau}^\an$. Let us denote by $T_\top$ the topological monodromy of $\ccM$ around the origin, and by $\wh T_\top$ the topological monodromy of $\ccF^{(0,\infty)}\ccM$ around $\tau=0$ (with the notation as in \S\ref{subsec:local}). Since $\Fou M$ has no singularity except at $0,\infty$, $\wh T_\top^{-1}$ is the topological monodromy of $\Fou M$ around $\tau'=0$. Therefore, there are various ways of calculating the topological monodromy $\wh T_\top$:
\begin{itemize}
\item
either by comparing it with the monodromy at $t=\infty$ of $M$, which is nothing, up to changing orientation, but the topological monodromy of $\ccM$ at $t=0$, since $M$ has no singular point on $(\Afuan_t)^*$, and its singular point at $t=\infty$ is regular; this will be done in Proposition \ref{prop:topmonodromyF};
\item
or by a direct topological computation at $\tau=\infty$; this is done in \S\ref{subsec:topmono};
\item
lastly, by obtaining it from the Stokes data at $\tau=\infty$, once we have computed them; this is however not the most economical way.
\end{itemize}

Given an automorphism~$T$, we will denote by $T_\lambda$ ($\lambda\in\CC^*$) the component of $T$ with eigenvalue $\lambda$. Since $\wh M$ has a regular singularity at $\tau=\infty$, with monodromy equal to $T_\top^{-1}$, classical results give:

\begin{proposition}\label{prop:topmonodromyF}
For $\lambda\neq1$, we have $\wh T_{\top,\lambda}=T_{\top,\lambda}$. Moreover, if $\ccM$ is a successive extension of germs of rank-one meromorphic connections, the Jordan blocks of size $k\geq2$ of $\wh T_{\top,1}$ are in one-to-one correspondence with the Jordan blocks of size $k-1$ of $T_{\top,1}$.
\end{proposition}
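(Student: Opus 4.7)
My plan is to deduce both assertions from the classical local Laplace correspondence $\ccF^{(\infty,0)}$, which describes the formal germ of $\wh M$ at $\tau'=0$ in terms of the formal germ of $M$ at $t=\infty$. As a first step, I would relate $\wh T_\top$ to the monodromy at $\tau'=0$: since $\wh M$ has its only singularities in $\tau'$ at $0$ and $\infty$, the local system attached to $\Fou M$ on $\GG_{\rmm,\tau'}^\an$ is controlled by a single monodromy, and its counterclockwise monodromies around the two ends are inverse to each other. Writing $T_0'$ for the counterclockwise monodromy at $\tau'=0$, this gives $\wh T_\top=(T_0')^{-1}$.

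Next, I would invoke the classical formula for $\ccF^{(\infty,0)}$ on regular data. Since $M$ is regular at $t=\infty$ with monodromy $T_\infty=T_\top^{-1}$ (the local system of $M$ on $\GG_{\rmm,t}^\an$ is governed by $T_\top$, and the endpoint monodromies are inverse), the classical monodromy formula (see \eg\cite{Malgrange91}) identifies $(T_0')_\mu$ with $(T_\infty)_\mu$ for $\mu\neq1$ and, at $\mu=1$, establishes a bijection between Jordan blocks of size $k\geq1$ of $(T_\infty)_1$ and blocks of size $k+1$ of $(T_0')_1$, possibly together with additional size-$1$ Jordan blocks of $(T_0')_1$ arising from the boundary behaviour.

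The conclusion then follows by combining these with $\wh T_\top=(T_0')^{-1}$, using that inversion preserves Jordan block sizes within each generalized eigenspace while swapping eigenvalues $\mu\leftrightarrow\mu^{-1}$. For $\lambda\neq1$, $\wh T_{\top,\lambda}$ inherits the Jordan type of $(T_0')_{\lambda^{-1}}=(T_\infty)_{\lambda^{-1}}=T_{\top,\lambda}$, yielding the first assertion. For $\lambda=1$, the Jordan blocks of size $k\geq2$ of $\wh T_{\top,1}$ biject with the blocks of size $k$ of $(T_0')_1$ and, via the above formula, with the blocks of size $k-1$ of $(T_\infty)_1$; since the latter share the Jordan sizes of $T_{\top,1}$, this yields the second assertion.

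The main obstacle is the classical monodromy formula for $\ccF^{(\infty,0)}$ at eigenvalue~$1$, and this is precisely where the \emph{successive extension} hypothesis is used for the second assertion. My strategy would be to induct on the length of the filtration, reducing via short exact sequences to the rank-one case: for a rank-one regular germ at $\infty$ with monodromy $\mu\neq1$, a direct Laplace computation matches the formula on the nose; the case $\mu=1$ is the source of the extra size-$1$ Jordan blocks tracked above, and one must check that they propagate compatibly through extensions so that the bijection between Jordan blocks of size $k-1$ on the source and size $k\geq2$ on the target is preserved.
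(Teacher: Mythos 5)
Your outline is essentially the paper's route, not a different one: relate $\wh T_\top^{-1}$ to the monodromy of $\Fou M$ at $\tau'=0$, relate $T_\top^{-1}$ to the monodromy of $M$ at $t=\infty$, then compare. What you call the ``classical monodromy formula for $\ccF^{(\infty,0)}$'' is, in the paper's cleaner formulation, the nearby/vanishing-cycles dichotomy at $\tau'=0$: by \cite[Prop.\,4.1(iv)]{Bibi05b}, $T_\top^{-1}$ is the monodromy on the vanishing cycles $\phi_{\tau'}\Fou M$ and $\wh T_\top^{-1}$ is the monodromy on the nearby cycles $\psi_{\tau'}\Fou M$. For $\lambda\neq1$ the maps $\mathrm{can}$ and $\mathrm{var}$ identify $\psi_\lambda\simeq\phi_\lambda$, which already gives the first assertion.

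The eigenvalue-$1$ part of your sketch has a genuine gap. You assert a bijection between Jordan blocks of size $k$ of $(T_\infty)_1$ and of size $k+1$ of $(T_0')_1$, ``possibly together with additional size-$1$ blocks\dots arising from the boundary behaviour,'' and propose to establish it by a direct rank-one Laplace computation followed by propagation through extensions. But the Proposition claims an \emph{exact} bijection in sizes $\geq 2$, and this fails in general: it holds precisely when $\Fou M$ is the middle (minimal) perverse extension at $\tau'=0$, \ie when $\mathrm{can}:\psi_1\to\phi_1$ is onto and $\mathrm{var}:\phi_1\to\psi_1$ is injective. That minimal-extension property is where the successive-extension hypothesis actually enters, and it is the step your induction is missing. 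The paper supplies it: for rank-one $\ccM$, $M$ is irreducible as a $\Clt$-module, so $\Fou M$ is irreducible and hence a minimal extension at $\tau'=0$; and the class of minimal extensions at $\tau'=0$ is stable in a short exact sequence whose outer terms lie in it (no subobject or quotient supported at $\tau'=0$ can survive). Saying that one ``must check'' a propagation property does not close this --- without identifying the minimal-extension invariance, the uncontrolled size-$1$ blocks you allow for are exactly what would break the claimed bijection, and a ``direct Laplace computation'' in rank one at eigenvalue~$1$ is the delicate case, not the easy one.
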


\begin{proof}[Sketch of proof]
Only for this proof, we denote by $\Fou M$ the non localized Laplace transform of $M$. It has a regular singularity at $\tau'=0$. The monodromy $T_\top^{-1}$ of $M$ at infinity is known to be equal to the monodromy of the vanishing cycles of $\Fou M$ at $\tau'=0$, while $\wh T_\top^{-1}$ is the monodromy of the nearby cycles of $\Fou M$ at $\tau'=0$ (\cf\eg\cite[Prop.\,4.1(iv)]{Bibi05b} with $z=1$). The first assertion follows. For the second assertion, note that, if $\ccM$ is a rank-one meromorphic connection, then~$M$ is irreducible as a $\Clt$-module, and thus $\Fou M$ is also irreducible, so is a minimal extension at $\tau'=0$, which implies the second assertion for $M$. In general, note that in a exact sequence $0\to\Fou M'\to\Fou M\to\Fou M''\to0$, if the extreme terms are minimal extensions at $\tau'=0$, then so is the middle term. This concludes the proof of the second assertion.
\end{proof}

As in Remark \ref{rem:Stokessheaf}, let us consider the real blowing-up map $\varpi:\wt\PP^1_t\to\PP^1_t$ of $\PP^1_t$ at $t=0$ and $t=\infty$ (so that $\wt\PP^1_t$ is a closed annulus) and let us denote by $\wtpi:\wt\PP^1_t\times\GG_{\rmm,\tau}^\an\to\GG_{\rmm,\tau}^\an$ the projection. Then we have
\begin{equation}\label{eq:FcF}
\FcF\simeq R^1\wtpi_*\DR^{\rmod(D_0\cup D_\infty)}(\pi^+M\otimes E^{-t/\tau})
\end{equation}
and all other $R^k\wtpi_*$ vanish. We note that, for $\tau\neq0$, twisting with $E^{-t/\tau}$ has no effect near $t=0$. Similarly, since $M$ is regular at $\infty$, $E^{-t/\tau}$ is the only important factor along $t=\infty$.

\subsubsection*{Computation}
Let us use the notation of Remark \ref{rem:Stokessheaf}. Let us fix $\tau_o\in\CC^*$ and let us compute the fibre $\FcF_{\tau_o}$. Recall that $\wt\PP^1_t$ is a closed annulus with boundary components $\SS^1_{t=0}$ and $\SS^1_{t=\infty}$. Let $\cF_{\leq0}$ be the sheaf $\cH^0\DR^{\rmod0}M$ on the semi-closed annulus $\wt\PP^1_t\moins\SS^1_{t=\infty}$. We denote by $(\wt\PP^1_{t})_{\leq_{\tau_o}\infty}$ the closed annulus $\wt\PP^1_t$ with the closed interval $\arg t-\arg\tau_o\in[-\pi/2,\pi/2]\bmod2\pi$ in~$\SS^1_{t=\infty}$ deleted. This is the domain where the function $\exp(t/\tau_o)$ has rapid decay. We consider the inclusions
\[
\wt\PP^1_t\moins\SS^1_{t=\infty}\Hto{\alpha^\infty_{\tau_o}}(\wt\PP^1_{t})_{\leq_{\tau_o}\infty}\Hto{\beta^\infty_{\tau_o}}\wt\PP^1_t.
\]
We then have (similarly to \cite[\S7.3]{Bibi10}):

\begin{proposition}
The fibre $\FcF_{\tau_o}$ is equal to $H^1(\wt\PP^1_t,\beta^\infty_{\tau_o,!}\alpha^\infty_{\tau_o,*}\cF_{\leq0})$ and the other $H^k$ vanish. It is isomorphic to the vanishing cycle space at $t=0$ of the perverse sheaf $\varpi_*\cF_{\leq0}$ on $\Afu_t$. The monodromy of $\FcF$ with respect to $\tau$ is obtained by rotating the interval $(\pi/2,3\pi/2)+\arg\tau_o\subset \SS^1_{t=\infty}$ in the counterclockwise direction with respect to~$\tau_o$.\qed
\end{proposition}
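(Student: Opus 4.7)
The plan is, first, to apply proper base change to the identification \eqref{eq:FcF} in order to reduce $\FcF_{\tau_o}$ to a sheaf-cohomology computation on $\wt\PP^1_t$; second, to identify the restriction of the moderate de Rham sheaf to the fibre $\wt\PP^1_t \times \{\tau_o\}$ with $\beta^\infty_{\tau_o,!}\alpha^\infty_{\tau_o,*}\cF_{\leq 0}$ by a local analysis of the moderate growth condition; and third, to interpret the resulting $H^1$ as vanishing cycles and to read off the monodromy from the explicit family $\tau \mapsto (\wt\PP^1_t)_{\leq_\tau\infty}$.

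Since $\wt\PP^1_t$ is compact, $\wtpi$ is proper, so proper base change together with the vanishing of the other $R^k\wtpi_*$ noted just before \eqref{eq:FcF} immediately yields both the identification $\FcF_{\tau_o}=H^1(\wt\PP^1_t,\cG_{\tau_o})$ (where $\cG_{\tau_o}$ denotes the restriction of $\cH^0\DR^{\rmod(D_0 \cup D_\infty)}(\pi^+M \otimes E^{-t/\tau})$ to the fibre) and the vanishing of the other $H^k$. To identify $\cG_{\tau_o}$, analyse the moderate growth condition region by region. Over $\GG_{\rmm,t}^{\an}$ no condition is imposed, giving the local system $\cF$. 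Near $\SS^1_{t=0}$, $\exp(t/\tau_o)$ is a unit, so the condition reduces to that defining $\cF_{\leq 0}$. Near $\SS^1_{t=\infty}$, since $M$ is regular at $\infty$, the flat sections of $\pi^+M \otimes E^{-t/\tau_o}$ are of the form $f \cdot \exp(t/\tau_o)$ with $f$ tempered, and moderate growth reduces to moderate growth of $\exp(t/\tau_o)$: at $\vt \in \SS^1_{t=\infty}$ this holds iff a small sector around $\vt$ lies in $\{\Re(t/\tau_o) \leq 0\}$, i.e.\ iff $\vt \in (\pi/2, 3\pi/2) + \arg \tau_o$; the Stokes directions $\pm\pi/2 + \arg \tau_o$ are excluded since any sector containing them meets the region $\{\Re(t/\tau_o) > 0\}$ of exponential growth. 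This matches exactly the definition of $(\wt\PP^1_t)_{\leq_{\tau_o}\infty}$, giving $\cG_{\tau_o} = \beta^\infty_{\tau_o,!}\alpha^\infty_{\tau_o,*}\cF_{\leq 0}$.

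For the vanishing-cycle interpretation I would use a Mayer--Vietoris decomposition of $\wt\PP^1_t$ into a small tubular neighbourhood of $\SS^1_{t=0}$ and its complement (which, inside $(\wt\PP^1_t)_{\leq_{\tau_o}\infty}$, retracts onto a neighbourhood of the open arc at $\infty$). The resulting long exact sequence relates $H^1(\wt\PP^1_t, \cG_{\tau_o})$ with the global stalk $(\varpi_*\cF_{\leq 0})_0 = \Gamma(\SS^1_{t=0}, \cL_{\leq 0})$ and with the generic stalk $\bV$ of $\cF$, exhibiting $H^1$ as the cone of the specialisation map of $\varpi_*\cF_{\leq 0}$ at $t=0$, i.e.\ as $\phi_0(\varpi_*\cF_{\leq 0})$. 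The monodromy is then purely geometric: the family $\{\cG_\tau\}_\tau$ depends on $\tau$ only through the rotation of the open arc $(\pi/2, 3\pi/2) + \arg \tau \subset \SS^1_{t=\infty}$, and a counterclockwise loop of $\tau$ around $0$ rotates this arc counterclockwise by $2\pi$, inducing the asserted automorphism on $H^1$. The main obstacle is the vanishing-cycle identification: one must align the Stokes-perverse extension $\varpi_*\cF_{\leq 0}$ with the cohomology of $\beta^\infty_{\tau_o,!}\alpha^\infty_{\tau_o,*}\cF_{\leq 0}$ and match the convention for vanishing cycles of an irregular perverse sheaf; once this is settled, the monodromy description drops out from the explicit parametrisation $\tau \mapsto (\wt\PP^1_t)_{\leq_\tau\infty}$.
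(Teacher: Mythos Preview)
The paper does not prove this proposition---it is stated with a bare \qed, the preceding sentence pointing to \cite[\S7.3]{Bibi10} for the analogous computation. Your argument (proper base change, local identification of the moderate de~Rham sheaf via the growth of $\exp(t/\tau_o)$ along $\SS^1_{t=\infty}$, and the geometric reading of the monodromy as rotation of the open arc) is exactly the standard one carried out there, and your own caveat about the vanishing-cycle bookkeeping is the only place requiring care.
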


\begin{remarque}
We can similarly compute the fibre of $\wt{\wh\rho}{}^{-1}\FcF$ at any $\eta_o$ by replacing $\arg\tau_o$ with $\wh p\arg\eta_o$ in the formula above.
\end{remarque}

\subsection{The Stokes structure of \texorpdfstring{$\ccF^{(0,\infty)}\ccM$}{FlocM2}}\label{subsec:stokesFM}
Let us denote by $\FcL_\bbullet$ the Stokes filtration of $\FcL$, regarded as a family of subsheaves of $\wt{\wh\rho}{}^{-1}(\FcL)$.
Due to the ramification $\wh\rho$, we now work with the variable $\eta$. Let $\wt{\PP^1_t\times\Afu_\eta}$ be the real oriented blowing-up of $\PP^1_t\times\Afu_\eta$ along the components $D_0, D_\infty, D_{\wh\infty}$ of $D$ regarded now in $\PP^1_t\times\Afu_\eta$. We have a similar diagram (see \eg \cite[Chap.\,8]{Bibi10}):
\[
\xymatrix@=5mm{
&\wt{\PP^1_t\times\Afu_\eta}\ar[dl]_{\wpi}\ar[dr]^{\wtpi}&\\
\wt\PP^1_t&&\wt\Afu_\eta&\ar@{_{ (}->}[l]_-{\whj}\GG_{\rmm,\eta}
}
\]
We note that $\wt{\PP^1_t\times\Afu_\eta}$ is the product $\wt\PP^1_t\times\wt\Afu_\eta$ of the closed annulus $\wt\PP^1_t$ by the semi-closed annulus $\wt\Afu_\eta$. One defines on such a real blown-up space the sheaf of holomorphic functions with rapid decay near the boundary (\ie the inverse image by the real blow-up map of the divisor $D$). There is a corresponding de~Rham complex that we denote by $\DR^\modD$ (see \loccit).

\begin{theoreme}[Mochizuki, {\cite[Cor\ptbl4.7.5]{Mochizuki10}}]\label{th:pushforward}
For each $\wh\psi\in\eta^{-1}\CC[\eta^{-1}]$ the natural morphism
\enlargethispage{\baselineskip}%
\[
\DR^{\rmod\wh\infty}\bigl(\wh\rho^+\ccF^{(0,\infty)}(\ccM)\otimes\ccE^{\wh\psi(\eta)}\bigr)\to\bR\wtpi_*\DR^\modD(\pi^+M\otimes E^{\wh\psi(\eta)-t/\wh\rho(\eta)})[1]
\]
is a quasi-isomorphism and the natural morphism
\[
R^1\wtpi_*\DR^\modD(\pi^+M\otimes E^{\wh\psi(\eta)-t/\wh\rho(\eta)})\to\whj_*R^1\wtpi_*\DR^{\rmod(D_0\cup D_\infty)}(\pi^+M\otimes E^{-t/\wh\rho(\eta)})
\]
is injective.
\end{theoreme}

\begin{remarques}\mbox{}
\begin{enumerate}
\item
The result of \cite[Cor\ptbl4.7.5]{Mochizuki10} refers to the real blow-up $\PP^1_t\times\wt\Afu_\eta$ along~$D_{\wh\infty}$, not along all the components of $D$. The above statement is obtained by using that, with respect to $\varpi:\wt{\PP^1_t\times\Afu_\eta}\to\PP^1_t\times\wt\Afu_\eta$, we have
\[
\bR\varpi_*\DR^{\modD}(\pi^+M\otimes E^{-t/\wh\rho(\eta)})=\DR^{\modD_{\wh\infty}}(\pi^+M\otimes E^{-t/\wh\rho(\eta)}).
\]
This follows from the identification $\bR\varpi_*\cA^{\modD}_{\wt{\PP^1_t\times\Afu_\eta}}=\cA^{\modD_{\wh\infty}}_{\PP^1_t\times\wt\Afu_\eta}(*D_0\cup D_\infty)$, as in \cite[Prop.\,8.9]{Bibi10}.

\item
Set $\wh\rho(\eta)=c\eta^{\wh p}(1+\ro(\eta))$ and let us choose a $\wh p$-th root $c^{1/\wh p}(1+\ro(\eta))$ of $\wh\rho(\eta)/\eta^{\wh p}$. Then the $\wh p$-th roots of $\wh\rho(\eta)$ are written $\wh\rho^{1/\wh p}_{\wzeta}(\eta)=\wzeta\eta\cdot c^{1/\wh p}(1+\ro(\eta))$ ($\wzeta\in\mu_{\wh p}$). Now, for $\wh\psi\in\eta^{-1}\CC[\eta^{-1}]$, we set $\wh\psi_{\wzeta}(\eta):=\wh\psi\bigl(\wh\rho^{1/\wh p}_{\wzeta}(\eta)\bigr)$.

The $\mu_{\wh p}$-equivariance is then induced by the isomorphism
\[
\wzeta^+E^{\wh\psi(\eta)-t/\wh\rho(\eta)}=E^{\wh\psi_{\wzeta}(\eta)-t/\wh\rho(\eta)},
\]
where we still denote by $\wzeta$ the map $\eta\mto\wh\rho^{1/\wh p}_{\wzeta}(\eta)$.
\end{enumerate}
\end{remarques}

The theorem expresses therefore the Stokes filtration as the push-forward by $\wtpi$ of a family of complexes indexed by $\wh\Exp$. The questions stated in the introduction reduce now to
\begin{itemize}
\item
expressing, for any $\wh\psi\in\wh\Exp$, the complex $\DR^{\modD}(\pi^+M\otimes E^{\wh\psi(\eta)-t/\wh\rho(\eta)})$, together with the isomorphisms
\[
\wzeta^{-1}\DR^{\modD}(\pi^+M\otimes E^{\wh\psi(\eta)-t/\wh\rho(\eta)})\simeq\DR^{\modD}(\pi^+M\otimes E^{\wh\psi_{\wzeta}(\eta)-t/\wh\rho(\eta)}),
\]
in terms of the Stokes-filtered local system attached to $M$,
\item
computing in a topological way the push-forward by $\wtpi$ once this complex is well understood.
\end{itemize}

While $\cH^0\DR^{\modD}$ is easy to compute from the data $(\cF,\cL_\bbullet)$, it happens in general that the complex $\DR^{\modD}$ has higher cohomology, which is not easy to express in terms of $(\cF,\cL_\bbullet)$. The main reason is that the meromorphic connection $\pi^+M\otimes E^{\wh\psi(\eta)-t/\wh\rho(\eta)}$ may not be good (in the sense of \cite{Bibi97}) at $(0,\wh\infty)$, due to indeterminacy $0/0$ of the functions
\begin{equation}\label{eq:kappa}
\omega_{\vi,\wh\psi}(u,\eta):=\wh\psi(\eta)-\rho(u)/\wh\rho(\eta)-\vi(u)
\end{equation}
for $u\to0$ and $\eta\to0$ ($\vi\in\Exp$, $\wh\psi\in\wh\Exp$).

\begin{proposition}\label{prop:good}
There exists a projective modification $e:Z\to\PP^1_t\times\Afu_\eta$ which consists of a succession of point blowing-ups above $(0,\wh\infty)$, such that for each $\wh\psi\in\wh\Exp$, the pull-back $\pi^+M\otimes E^{\wh\psi(\eta)-t/\wh\rho(\eta)}$ is good along the normal crossing divisor $D_Z:=e^{-1}(D)$.\qed
\end{proposition}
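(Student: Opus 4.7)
The plan is to reduce the problem to a local one at the crossing point $(0,\wh\infty)$ and then appeal to the resolution theorem for meromorphic connections on surfaces.

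First I would check that the connection $\pi^+M \otimes E^{\wh\psi(\eta)-t/\wh\rho(\eta)}$ is already good along $D$ at every point outside $(0,\wh\infty)$, so that no blowup is needed there. Since $M$ is regular at $t=\infty$, along $D_\infty$ away from $D_{\wh\infty}$ the only irregular contribution comes from $-t/\wh\rho(\eta)$, which in the local coordinate $1/t$ is a monomial times a holomorphic unit. On the smooth part of $D_0$, where $\eta\neq 0$, the term $\rho(u)/\wh\rho(\eta)$ is holomorphic in $u$, so each $\omega_{\vi,\wh\psi}$ reduces modulo holomorphic functions to $-\vi(u)$; the pairwise differences $\vi'(u)-\vi(u)$ are monomials in $u^{-1}$ times units. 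Analogous checks at the smooth part of $D_{\wh\infty}$ (where $t\neq 0,\infty$) and at the crossing $D_\infty\cap D_{\wh\infty}$ are equally direct: the leading exponential factor is $-t/\wh\rho(\eta)$, which in the coordinates $(1/t,\eta)$ is a monomial times unit, while the $\wh\psi(\eta)$ contribute pure $\eta$-poles.

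The remaining obstruction is concentrated at $(0,\wh\infty)$. Here the three types of terms appearing in $\omega_{\vi,\wh\psi}(u,\eta)=\wh\psi(\eta)-u^p/\wh\rho(\eta)-\vi(u)$ contribute leading monomials of bidegrees $(0,-q)$, $(p,-(p+q))$ and $(-q,0)$ respectively in the $(u,\eta)$-coordinates. Because these bidegrees are distinct, the sum is not purely monomial, and the pairwise differences $\omega_{\vi,\wh\psi}-\omega_{\vi',\wh\psi'}$ further mix $u^{-1}$- and $\eta^{-1}$-poles. To remove this obstruction I would invoke the fundamental resolution theorem for meromorphic connections on complex surfaces due to Kedlaya and Mochizuki, which asserts that any meromorphic connection on a smooth surface with poles along a divisor can be made good at every point of that divisor after a finite sequence of point blowups. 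Applying this to the connection $\pi^+M\otimes E^{\wh\psi(\eta)-t/\wh\rho(\eta)}$ for each of the finitely many $\wh\psi\in\wh\Exp$, and taking a common refinement, one obtains a single projective modification $e:Z\to\PP^1_t\times\Afu_\eta$ supported above $(0,\wh\infty)$ that simultaneously resolves all of them, with $D_Z=e^{-1}(D)$ a normal crossing divisor.

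The main obstacle, should one want to be explicit, is the combinatorics of the blowup tower. Since $(p,q)$ are coprime, an explicit construction follows the Euclidean algorithm and produces a chain of exceptional curves whose combinatorial type is dictated by the continued fraction expansion of $p/q$; tracking the exponential factors through each chart of each blowup to verify goodness is elementary but tedious. For the sequel, however, only the \emph{existence} of such a modification with $D_Z$ of normal crossings is needed, in order to apply the de~Rham computations underlying Theorem \ref{th:pushforward}; this is precisely the content of the proposition and is exactly what the abstract resolution theorem supplies.
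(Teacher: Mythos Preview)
Your proposal is correct and follows essentially the same approach as the paper: the paper also cites the Kedlaya--Mochizuki resolution theorem as the general result that implies the proposition, and likewise remarks that in the present setting one can argue more directly by resolving the indeterminacy of the rational functions $\omega_{\vi,\wh\psi}$ (together with \cite[Lem.\,III.1.3.3]{Bibi97}). Your verification of goodness away from $(0,\wh\infty)$ and your remark on the Euclidean-algorithm combinatorics are in the same spirit as the paper's treatment, which later makes this explicit for the elementary case in \S\ref{subsec:simplifassumpt}.
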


This result is a particular case of a general result due to Kedlaya \cite{Kedlaya09} and Mochizuki \cite{Mochizuki07b} (see also \cite{Mochizuki09}), but can be proved in a much easier way in the present setting, since it essentially reduces to resolving the indeterminacy $0/0$ of the rational functions $\omega_{\vi,\wh\psi}(u,\eta)$ for $\vi\in\Exp(M)$ and $\wh\psi\in\Exp(\wh M)$, and to using \cite[Lem.\,III.1.3.3]{Bibi97}. A particular case will be made precise in \S\ref{subsec:simplifassumpt}. The results below do not depend on the choice of $Z$ satisfying the conclusion of Theorem \ref{th:Majima}. We decompose $D_Z$ as the union of the strict transforms of $D_0,D_\infty,D_{\wh\infty}$ that we denote by the same letters, and the exceptional divisor $E$.

The main tool is then the higher dimensional Hukuhara-Turrittin theorem (originally due to Majima \cite{Majima84}, see also \cite[Proof of Th.\,7.2]{Bibi93}, \cite[Chap.\,20]{Mochizuki08}, \cite[Th.\,12.5\,\&\,Cor.\,12.7]{Bibi10}). We consider the real blow-up space $\wt Z$ of $Z$ along the components of $D_Z$ and the natural lift $\wt e:\wt Z\to\wt{\PP^1_t\times\Afu_\eta}$ of $e:Z\to\PP^1_t\times\Afu_\eta$.

\begin{theoreme}\label{th:Majima}
If $Z$ is as in Proposition \ref{prop:good}, then $\DR^{\modD_Z}e^+(\pi^+M\otimes E^{\wh\psi(\eta)-t/\wh\rho(\eta)})$ has cohomology in degree zero only.\qed
\end{theoreme}

We can then use a particular case \cite[Prop.\,8.9]{Bibi10} of \cite[Cor\ptbl4.7.5]{Mochizuki10}, which is easier to prove because $e$ is a projective modification, to get:
\[
\bR\wt e_*\DR^{\modD_Z}e^+(\pi^+M\otimes E^{\wh\psi(\eta)-t/\wh\rho(\eta)})=\DR^\modD(\pi^+M\otimes E^{\wh\psi(\eta)-t/\wh\rho(\eta)}).
\]

We will apply Theorem \ref{th:pushforward} through its corollary below.

\begin{corollaire}[of Theorem \ref{th:pushforward}]\label{cor:pushforward}
Let $Z$ be as in Proposition \ref{prop:good}. Then, for each $\wh\psi\in\eta^{-1}\CC[\eta^{-1}]$ the natural morphism
\[
\DR^{\rmod\wh\infty}\bigl(\wh\rho^+\ccF^{(0,\infty)}(M)\otimes\ccE^{\wh\psi(\eta)}\bigr)\to R^1(\wtpi\circ\wt e)_*\cH^0\DR^{\modD_Z}e^+(\pi^+M\otimes E^{\wh\psi(\eta)-t/\wh\rho(\eta)})
\]
is a quasi-isomorphism and the natural morphism
\[
R^1(\wtpi\circ\wt e)_*\cH^0\DR^{\modD_Z}e^+(\pi^+M\otimes E^{\wh\psi(\eta)-t/\wh\rho(\eta)})
\to\wt{\wh\rho}{}^{-1}\FcF
\]
is injective.\qed
\end{corollaire}

The pull-back $(\wh\pi\circ e)^{-1}(\{\eta=0\})=e^{-1}(D_{\wh\infty})$ is the union $D_{\wh\infty}\cup E$. Its pull-back in~$\wt Z$, denoted by $\wD_{\wh\infty}\cup\wE$, is equal to $(\wtpi\circ\wt e)^{-1}(\SS^1_{\eta=0})$. We will denote by $\wthj$ the open inclusion $\wt Z\moins(\wD_{\wh\infty}\cup\wE)\hto\wt Z$.

Let $\rho:u \mapsto u^p=t $ be a ramification such that $\rho^+ \cM $ is non-ramified at $t=0 $. Consider the fibre product $\wt Z':= (\wt{\PP^1_u\times\Afu_\eta}) \times_{\wt{\PP^1_t\times\Afu_\eta}} \wt Z $ as a topological space:
\[
\xymatrix@C=15mm{
\wt{\PP^1_u\times\Afu_\eta}\ar[d]_{\wt{\rho\times 1}} & {\wt Z'}\ar[l]_-{\wt e\,'} \ar[d]^{\wt \sigma} \\
\wt{\PP^1_u\times\Afu_\eta} & \wt Z \ar[l]_-{\wt e}\\
}
\]
Let us write $\wt D_Z' := \wt \sigma^{-1}(\wt D_Z) $ and $\wthj \, {}':\wt Z' \smallsetminus \sigma^{-1}(\wD_{\wh \infty} \cup \wE) \hookrightarrow \wt Z' $. Note that the restriction of $\wt e\,' $ to $\wt Z' \moins \wt D_Z' $ is a homeomorphism.

Let $\varpi: \wt Z \to Z $ be the oriented real blow-up map. Below we use the convention of Remark \ref{rem:Stokessheaf}, and for $\wt z' \in\wt Z'$ we set
\[
\wt u=(\wt \pi'\circ\wt e\,')(\wt z\,')\in\wt\PP^1_u,\quad
\Exp_{\wt u}=
\begin{cases}
\Exp&\text{if }\wt u \in\SS^1_{u=0},\\
0&\text{if }\wt u\in\PP^1_u\moins \SS^1_{u=0}.
\end{cases}
\]
and, as usual, for a point $\wt z\,' \in \wt Z' $ over $z:= \varpi \circ \wt \sigma(\wt z\,') \in Z $ and a function $g(u,\eta) $:
\[
\begin{split}
e'^*\vi\leq_{\wt z'} e'^*g &\iff\reel\bigl(e'^*\vi-e'^*g\bigr)<0\text{ in some neighb.\ of }\wt z'\\
&\hspace*{2.2cm}\text{or }e'^*\vi-e'^*g \text{ is bounded in some neighb.\ of $(\varpi \circ \wt \sigma)^{-1}(z)$}.
\end{split}
\]
Recall that the Stokes filtration $(\cF', \cF'_{\le \wh{\psi}}) $ on the local system $\cF' := \wt{\rho}^{-1} \cF $ satsifies the equivariance condition $\zeta^{-1} \cF'_{\le \vi} = \cF'_{\le \vi_\zeta} $ inside $\zeta^{-1} \cF'=\cF' $ for each $\zeta \in \mu_p $.

\pagebreak[2]
\begin{definition}\mbox{}
\begin{enumerate}
\item
The sheaf $\cG'$ on $\wt Z'$ is defined as $\wthj\,{}'_*\cG^{\prime\circ}$, where $\cG^{\prime\circ}$ is the subsheaf of $(\wt{\pi'} \circ \wt{e'})^{-1} \cF' $ on $\wt Z'\moins(\wt\sigma^{-1}(\wD_{\wh\infty}\cup\wE))$ whose germ at $\wt z' \in\wt Z'\moins(\wt\sigma^{-1}(\wD_{\wh\infty}\cup\wE))$ is defined by
\[
\cG^{\prime\circ}_{\wt z'}:=\sum_{\substack{\vi\in\Exp_{\wt u}\\ e'^*\vi\leq_{\wt z'} e'^*(-\rho(u)/\wh\rho(\eta))}}\hspace*{-8mm}\cF'_{\leq\vi,\wt u}\subset\cF'_{\wt u}.
\]

\item
The sheaf $\cG'_{\leq\wh\psi}$ is the subsheaf of $\cG'$ which coincides with the latter away from~$\wt D_{Z'}$ and whose germ $\cG'_{\leq\wh\psi,\wt z'}$ at each $\wt z'\in\wt Z'$ is defined by the formula (same convention as in \eqref{eq:extindexx})
\[
\cG'_{\leq\wh\psi,\wt z'}:=\sum_{\substack{\vi\in\Exp_{\wt u}\\ e'^*\vi\leq_{\wt z'} e'^*(\wh\psi- \rho(u)/\wh\rho(\eta))}}\hspace*{-10mm}\cF'_{\leq\vi,\wt u}\subset\cF'_{\wt u}.
\]
\end{enumerate}
\end{definition}

(We note that the inclusion $\cG'_{\leq\wh\psi}\subset\cG'$ is obvious away from $\wt\sigma^{-1}(\wD_{\wh\infty}\cup\wE)$ since $\psi$ plays no role there, and therefore it is also clear on this set, by the definition of $\cG'$ there.)

\begin{lemdef}[Sheaves $\cG$ and $\cG_{\leq\wh\psi}$]\label{def:Gpsihat}\mbox{}
The sheaves $\cG' $ and $\cG'_{\leq\wh\psi} $ descend with respect to $\wt\sigma $ to subsheaves
$\cG $ and $\cG_{\leq\wh\psi} $ of $(\wt\pi \circ \wt e)^{-1} \cF $ on $\wt Z $.
\end{lemdef}

\begin{proof}
Since $\rho(u)=u^p $, the group $\mu_p $ acts transitively on the fibres of $\wt \sigma $. This action is free on the fibres over $\wt z \in \wt D_Z $. By definition, for any $\zeta\in\mu_p$,
\[
e'^\ast \vi \leq_{\zeta \wt z'} e'^\ast ( \wh\psi(\eta) -\rho(u)/\wh\rho(\eta)) \Longleftrightarrow
e'^\ast \vi_\zeta \leq_{\wt z'} e'^\ast ( \wh\psi(\eta) -\rho(u)/\wh\rho(\eta))
\]
since the right-hand side is invariant with respect to the $\mu_p $-action. By the equivariance property of $\cF'_{\leq} $, we deduce the equality of subspaces of $\cF_{\wt t}$:
\[
\cG'_{\leq\wh\psi,\zeta \wt z'} =
\sum_{\substack{\vi\in\Exp_{\wt u}\\ e'^*\vi\leq_{\zeta \wt z'} e'^*(\wh\psi- \rho(u)/\wh\rho(\eta))}} \hspace*{-10mm}\cF'_{\leq\vi,\zeta \wt u} = \\
\sum_{\substack{\vi\in\Exp_{\wt u}\\ e'^*\vi_\zeta \leq_{\wt z'} e'^*(\wh\psi- \rho(u)/\wh\rho(\eta))}} \hspace*{-10mm}\cF'_{\leq \vi_\zeta,\wt u} = \cG'_{\leq\wh\psi, \wt z'} \ ,
\]
where we recall that $\vi \in \Exp_{\wt u} \Leftrightarrow \vi_\zeta \in \Exp_{\wt u} $ for all $\zeta \in \mu_p $. Therefore, we can define the subsheaf $\cG_{\leq\wh\psi} \subset (\wt\pi \circ \wt e)^{-1} \cF $ by requiring its stalk at $\wt z $ to be
$\cG_{\leq\wh\psi,\wt z} := \cG'_{\leq\wh\psi, \wt z'} $ independent on the choice of the lift $\wt z' $ of $\wt z $. The same works for $\cG' $ descending to a subsheaf $\cG $ of $(\wt\pi \circ \wt e)^{-1} \cF $. Obviously, $\cG_{\leq\wh\psi} \subset \cG $.
\end{proof}

\begin{remarque}\label{rem:orderZ}
The indexing condition in the sum defining $\cG'_{\leq\wh\psi,\wt{z'}}$ can be written as $0\leq_{\wt z'} e'^*\omega_{\vi,\wh\psi}$, where $\omega_{\vi,\wh\psi}$ is defined by \eqref{eq:kappa}. Let $\wt\eta\in\wt\PP^1_\eta$ be the image of $\wt z\,'$ by $\wtpi\circ\wt e\,'$. We will be mainly interested in the case where $\wt\eta\in\SS^1_{\eta=0}$, in which case we denote it by $\vtn$. Then, when $\vi$ is fixed, the condition $e'^*\omega_{\vi,\wh\psi}\leq_{\wt z'} e'^*\omega_{\vi,\wh\psi'}$ is equivalent to $\wh\psi\leq_{\vtn}\wh\psi'$. Therefore, $\wh\psi\leq_{\vtn}\wh\psi'$ implies $\cG_{\leq\wh\psi,\wt z}\subset \cG_{\leq\wh\psi',\wt z}\subset\cG_{\wt z}$. The theorem below implies then that $(R^1(\wtpi\circ\wt e)_*\cG_{\leq\wh\psi})_{\vtn}$ is included in $(R^1(\wtpi\circ\wt e)_*\cG_{\leq\wh\psi'})_{\vtn}$ and defines a filtration of $(R^1(\wtpi\circ\wt e)_*\cG)_{\vtn}$.
\end{remarque}

\begin{thdef}\label{th:F0inftytop}
Let $(\cF,\cF_\bbullet)$ be a Stokes-filtered sheaf on $\wt\PP_t^1$, with trivial Stokes filtration away from $\SS^1_{t=0}$ and a Stokes filtration indexed by $\Exp$ on $\SS^1_{t=0}$ having ramification~$\rho$. Define $\wh\Exp$ and $\wh\rho$ according to the stationary phase formula, and $\cG,\cG_{\leq\wh\psi}$ as in Definition \ref{def:Gpsihat}.
\begin{itemize}
\item
Then, for each $\wh\psi\in\wh\Exp$, the complexes $\bR(\wtpi\circ\wt e)_*\cG$ and $\bR(\wtpi\circ\wt e)_*\cG_{\leq\wh\psi}$ have cohomology in degree one at most, and $R^1(\wtpi\circ\wt e)_*\cG_{\leq\wh\psi}$ is a subsheaf of $R^1(\wtpi\circ\wt e)_*\cG=\wt{\wh\rho}{}^{-1}\FcF$. This family of subsheaves, together with the natural isomorphisms induced by $\wt{\wh\rho}{}^{-1}\cG_{\leq\wh\psi}\simeq\cG_{\leq\wh\psi\circ\wt{\wh\rho}}$ for $\wh\psi\in\wh\Exp$, defines a Stokes filtration of $\FcF$.
\item
The Stokes-filtered sheaf obtained by the procedure of the theorem is called the localized $(0,\infty)$-Laplace transform of the Stokes-filtered local system $(\cL,\cL_\bbullet)$, and denoted by $\cF_\top^{(0,\infty)}(\cL,\cL_\bbullet)$, or simply by $(\wh\cL,\wh\cL_\bbullet)$.
\end{itemize}
\end{thdef}

Note that the inclusion $R^1(\wtpi\circ\wt e)_*\cG_{\leq\wh\psi}\subset R^1(\wtpi\circ\wt e)_*\cG$ is far from obvious from the sheaf-theoretic point of view.

\begin{theoreme}\label{th:RHF0infty}
If $(\cL,\cL_\bbullet)$ is the Stokes-filtered local system attached to $\ccM$, then $(\wh\cL,\wh\cL_\bbullet)$ is the Stokes-filtered local system attached to $\ccF^{(0,\infty)}\ccM$.
\end{theoreme}

\begin{proof}[\proofname\ of Theorems \ref{th:F0inftytop} and \ref{th:RHF0infty}]
Given a Stokes-filtered local system $(\cL,\cL_\bbullet)$ we can choose a finite dimensional $\CC\lpb t\rpb$-vector space with connection $\ccM$ such that $(\cL,\cL_\bbullet)$ corresponds to $\ccM$ by the Riemann-Hilbert correspondence recalled in \S\ref{subsec:Stokesfilteredls}. Both results are proved simultaneously, in order to apply Corollary \ref{cor:pushforward}, which reduces the problem to showing that there is, for each $\wh\psi\in\wh\Exp$, a natural isomorphism
\begin{equation}\label{eq:GpsiE}
\cG_{\leq\wh\psi}\isom\cH^0\DR^{\modD_Z}e^+(\pi^+M\otimes E^{\wh\psi(\eta)-t/\wh\rho(\eta)}).
\end{equation}
Naturality means here the following. Let us denote by $\wtj$ the open inclusion \hbox{$Z\moins D_Z\hto\wt Z$}. The pull-back $(\wpi\circ\wt e)^{-1}\cF$ of the local system $\cF$ on $\wt\PP^1_t$ satisfies $(\wpi\circ\wt e)^{-1}\cF=\wtj_*\wtj^{-1}(\wpi\circ\wt e)^{-1}\cF$. On $Z\moins D_Z$, $\DR e^+(\pi^+M\otimes E^{\wh\psi(\eta)-t/\wh\rho(\eta)})$ is naturally identified with $\wtj^{-1}(\wpi\circ\wt e)^{-1}\cF$, and both sheaves considered in the isomorphism above are naturally regarded as subsheaves of $\wtj_*\wtj^{-1}(\wpi\circ\wt e)^{-1}\cF$. The isomorphism above is nothing but the equality as such.

The question is therefore local on $\wt Z$, and thus on $\wt{\PP^1_t\times\Afu_\eta}$, and we can assume that $\pi^+M$ is decomposed with respect to its Hukuhara-Turrittin decomposition in the neighbourhood of $\wt z$. Then we are reduced to the case where $\Exp$ is reduced to one element, and then the assertion is easy to check.
\end{proof}

\begin{conclusion}\label{conclusion}
Computing $\ccF^{(0,\infty)}(\cL,\cL_\bbullet)$ reduces therefore to the following steps:
\begin{enumerate}
\item
Given the subset $\Exp$ and its ``Laplace transform'' $\wh\Exp$ obtained through the stationary phase formula, to compute a sequence of blowing-ups $e:Z\to\PP^1_t\times\Afu_\eta$ such that each $e^+(\pi^+M\otimes E^{\wh\psi(\eta)-t/\wh\rho(\eta)})$ is good.
\item
To compute the family of subsheaves $\cG_{\leq\wh\psi}$ of $(\wpi\circ\wt e)^{-1}\cF$.
\item
To compute the push-forwards $R^1(\wtpi\circ\wt e)_*\cG_{\leq\wh\psi}$ as subsheaves of $\wt{\wh\rho}^{-1}\FcF$.
\item
To make explicit, in the formula thus obtained, the $\mu_{\wh p}$-action induced by the isomorphisms $\wzeta^{-1}\cG_{\leq\wh\psi}\simeq\cG_{\leq\wh\psi_{\wzeta}}$.
\end{enumerate}
\end{conclusion}

\begin{remarque}[Moderate growth versus rapid decay]\label{rem:modversusrapid}
One can also determine the Stokes filtered local system $(\wh\cL,\wh\cL_\bbullet)$ by computing the subsheaves $\wh\cL_{<\wh\psi}$. The use of the moderate growth condition is mainly dictated by \eqref{eq:FcF}, in order to avoid mixed moderate growth/rapid decay in Theorem \ref{th:pushforward}. On the other hand, assume that $\ccM$ is purely irregular. Then\enlargethispage{1.5\baselineskip}%
\[
\DR^{\rmod(D_0\cup D_\infty)}(\pi^+M\otimes E^{-t/\tau})=\DR^{\rdc(D_0\cup D_\infty)}(\pi^+M\otimes E^{-t/\tau}).
\]
As a consequence, with such an assumption, we can replace everywhere in \S\ref{subsec:stokesFM} the moderate growth property with the rapid decay property, provided we change $\cG_{\leq\wh\psi}$ with $\cG_{<\wh\psi}$ and the order $\leqwtz$ with the strict order~$\lewtz$.
\end{remarque}

\Subsection{The Stokes structure on $\ccF^{(0,\infty)}\ccM$ under a simplifying assumption}\label{subsec:simplifassumpt}

The main example of this article satisfies the following assumptions.

\begin{assumption}\label{ass:noram}\mbox{}
\begin{enumerate}
\item\label{ass:noram1}
$\ccM$ is purely irregular,
\item\label{ass:noram2}
there exists a ramification $\rho:u\mto u^p=t$ of order $p$ and a $\CC\lpb u\rpb$-module $\ccN$ with connection such that $\ccM=\rho_+\ccN$ (equivalently, $M=\rho_+N$) and for which the Levelt-Turrittin decomposition of~$\ccN$ is not ramified:
\[
\CC\lpr u\rpr\otimes\ccN=\bigoplus_{\vi\in\Exp}(\wh\ccE^{-\vi}\otimes\wh\ccR_\vi),
\]
where $\Exp$ is a finite subset in $u^{-1}\CC[u^{-1}]$.
\end{enumerate}
\end{assumption}

Assumption \ref{ass:noram}\eqref{ass:noram2} means that no Stokes phenomenon for $\ccM$ is produced by the ramification. With this assumption, $\wh\rho^+\ccF^{(0,\infty)}(\ccM)$ is obtained through the diagram
\[
\xymatrix@=5mm{
&\Afu_u\times\GG_{\rmm,\eta}\ar[dl]_\pi\ar[dr]^{\wh \pi}&\\
\Afu_u&&\GG_{\rmm,\eta}
}
\]
by the formula
\[
\wh\rho^+\Fou M=\wh\pi_+(\pi^+N\otimes E^{-\rho(u)/\wh\rho(\eta)}).
\]
In other words, we can work directly with the $u$ coordinate, as if $M$ were non-ramified, and the only difference with the case where $M$ is non-ramified is the twist is by $E^{-\rho(u)/\wh\rho(\eta)}$, not by $E^{-u/\wh\rho(\eta)}$. For that reason, we will use the same notations for the maps in the $u$-variable, and we will now denote by $(\cL,\cL_\bbullet)$ the non-ramified Stokes-filtered local system attached to $\ccN$. We regard $N$ as a holonomic $\cD_{\PP^1_u}$-module which is localized at $u=\infty$. Considering now the diagram
\[
\xymatrix@=5mm{
&\PP^1_u\times\Afu_\eta\ar[dl]_\pi\ar[dr]^{\wh \pi}&\\
\PP^1_u&&\Afu_\eta
}
\]
we have
\[
\wh\rho^+\ccF^{(0,\infty)}(\ccM)=\wh\pi_+(\pi^+N\otimes E^{-\rho(u)/\wh\rho(\eta)}).
\]
Let $\wh\Exp\subset \eta^{-1}\CC[\eta^{-1}]$ be the set of exponential factors of $\CC\lpr\eta\rpr\otimes\wh\rho^+\ccF^{(0,\infty)}(M)$ as above.

\begin{definition}\label{def:suitable}
A proper modification $e:Z\to\PP^1_u\times\Afu_\eta$ is said to be \emph{suitable for $\ccM$} if
\begin{enumerate}
\item
it is a succession of point blowing-ups above $(0,\wh\infty)\in\PP^1_u\times\Afu_\eta$,
\item
the indeterminacy at $(0,\wh\infty)$ of each function $\omega_{\vi,\wh\psi}(u,\eta)$ ($\vi\in\Exp$, $\wh\psi\in\wh\Exp$), \cf\eqref{eq:kappa}, is \emph{mostly} resolved on $Z$, that is, the components of the divisors of zeros of $\omega_{\vi,\wh\psi}(u,\eta)$ which have multiplicity $>2$ do not meet the divisor of poles of $\omega_{\vi,\wh\psi}\circ e$ along $e^{-1}(0,\wh\infty)$, and those having multiplicity $\leq2$ meet it at most at smooth points.
\end{enumerate}
\end{definition}

It is known that such a suitable modification always exists. The results on $\wh\rho^+\ccF^{(0,\infty)}\ccM$ obtained in the previous subsection can be adapted in a straightforward way to the present setting, even for a suitable modification, provided that
\begin{enumerate}
\item
we denote by $(\cF,\cF_\bbullet)$ the Stokes-filtered sheaf attached to $N$ (in the $u$\nobreakdash-variable),
\item
we replace $t$ with $\rho(u)$ and $\wt t$ with $\wt u$ in the formulas.
\end{enumerate}

With Assumption \ref{ass:noram}\eqref{ass:noram2}, we do not have to take care of the ramification of $M$. With Assumption \ref{ass:noram}\eqref{ass:noram1}, we can work in the rapid decay setting. The explicit description of the family of subsheaves $\cG_{<\wh\psi}$ is then simpler, due to the following lemma.

\begin{lemme}
If $Z$ is suitable for $\ccM$, then it satisfies the conclusion of Theorem \ref{th:Majima} for $N$ in the rapid decay setting.
\end{lemme}

\begin{proof}
Clearly the goodness condition holds away from the intersection of the zero set of the functions $\omega_{\vi,\wh\psi}$ with the divisor $D_Z$. We can then argue as in \cite[Lem.\,A.1]{Bibi13} for the zeros of multiplicity $\leq2$.
\end{proof}

Note that we use the convention that the inequality $0\lewtz e^*\omega_{\vi,\wh\psi}$ is \emph{not} satisfied at a point $\wz$ whose image in $Z$ is an indeterminacy point of $e^*\omega_{\vi,\wh\psi}$. We also use \cite[Lem.\,A.1]{Bibi13} to prove \eqref{eq:GpsiE} at these points.

\subsection{The case of an elementary meromorphic connection}\label{sec:caseofelement}
We now restrict to the case where $\ccM=\El(\rho,-\vi,\ccR)$, with $0\neq\vi\in u^{-1}\CC[u^{-1}]$, $\rho:u\mto u^p=t$ and~$\ccR$ is a regular connection. We thus have $\ccM=\rho_+\ccN$ with $\ccN=\ccE^{-\vi}\otimes\ccR$. Assumption \ref{ass:noram} is thus satisfied. Since~$\ccR$ is a successive extension of rank-one meromorphic connections, the monodromy of $\ccF^{(0,\infty)}\ccM$ is given by Proposition \ref{prop:topmonodromyF}, where $T_\top$ is the monodromy corresponding to $\rho_+\ccR$, that is, such that $T_\top^p=T$, if $T$ is the monodromy of $\ccR$.

We set $\vi=\vi_q u^{-q}(1+\ro(u))$ with $\vi_q\in\CC\moins\{0\}$. According to \cite{Fang07,Bibi07a}, we have $\CC\lpr\tau\rpr\otimes\ccF^{(0,\infty)}\ccM\simeq\El(\wh\rho,-\wvi,(-1)^q\ccR)$, with
\begin{align}
\wh\rho(\eta)&=-\frac{\rho'}{\vi'}=\frac{p}{q\vi_q}\,\eta^{p+q}(1+\ro(\eta)),\notag\\
\wvi(\eta)&=\vi(\eta)+\frac{\rho(\eta)}{\wh\rho(\eta)}=\frac{p+q}{p}\,\vi_q\eta^{-q}(1+\ro(\eta)).\label{eq:wvieta}
\end{align}
We thus have
\[
\CC\lpr\eta\rpr\otimes\wh\rho^+\ccF^{(0,\infty)}\ccM\simeq\bigoplus_{\wzeta\in\mu_{p+q}}\CC\lpr\eta\rpr\otimes(\cE^{-\wvi_{\wzeta}(\eta)}\otimes(-1)^q\ccR),
\]
with $\wvi_{\wzeta}(\eta)\!=\!\wvi\bigl(\wzeta\eta(1+\ro(\eta)\bigr)$. As a consequence, $\Exp\!=\!\{\vi\}$ and \hbox{$\wh\Exp\!=\!\{\wvi_{\wzeta}(\eta)\mid\wzeta\!\in\!\mu_{p+q}\}$}. The blowing-up $e:Z\to\PP^1_u\times\Afu_\eta$ (see \S\ref{subsec:simplifassumpt}) is obtained by resolving the family of rational functions
\begin{equation}\label{eq:alphadef}
\omega_{\wzeta}(u,\eta):=\wvi_{\wzeta}(\eta)-\vi(u)-\rho(u)/\wh\rho(\eta),\quad\wzeta\in\mu_{p+q}.
\end{equation}

\subsubsection*{Blow-up of \texorpdfstring{$(0,\wh\infty)$}{0infty}}
Let us denote by $\ve$ the single blow-up of the point $(0,\wh\infty)$. In the chart centered at $(0,\wh\infty)$, the divisor $D$ reduces to $D_0\cup D_{\wh\infty}$, with $D_0=\{u=0\}$ and $D_{\wh\infty}=\{\eta=0\}$ (\cf\S\ref{subsec:reminderlocalLaplace}, by using the ramified coordinates $u,\eta$). We denote by the same name their strict transforms by the blow-up map $\ve$, and by $E$ the exceptional divisor $\ve^{-1}(0,\wh\infty)$.
\begin{figure}[htb]
\begin{center}
\begin{picture}(70,20)(0,10)

	\put(0,20){\line(1,0){15}}
	\put(8,28){\line(0,-1){15}}
	\put(4,20){\vector(-1,0){2}}
	\put(8,20){\vector(0,-1){5}}

	\put(20,20){$\stackrel{\ve}{\longleftarrow}$}
	
	\put(45,30){\line(-1,0){15}}
	\put(38,32){\line(2,-1){25}}
	\put(60,30){\line(0,-1){20}}
	
	\put(35,30){\vector(-1,0){2}} \put(32,32){\footnotesize $y $}
	\put(46,28){\vector(2,-1){3}} \put(48,28){\footnotesize $x$}
	\put(60,18){\vector(0,-1){3}} \put(61,15){\footnotesize $w $}
	\put(58,22){\vector(-2,1){3}} \put(55,24){\footnotesize $z $}

	\put(2,22){\footnotesize $\eta $}
	\put(9,16){\footnotesize $u $}
\put(42,18){\footnotesize $E$}
\qbezier(45,20)(49,22)(50,25)

\put(66,26){\footnotesize $D_{\wh\infty}$}
\qbezier(65.5,25.5)(63,23)(60.5,24)

\put(28.5,23.5){\footnotesize $D_0$}
\qbezier(32,25)(37,26)(38,29)
\end{picture}
\end{center}
\caption{The blow-up map $\ve: Z\to\P^1_u \times \Af^1_\eta $.}\label{figure:pblowup}
\end{figure}

The chart $(u,\eta)$ is covered by two charts with respective systems of coordinates $(x,y)$ and $(w,z)$, such that $\ve(x,y)=(xy,y)$ and $\ve(w,z)=(w,wz)$. The strict transform of $D_0$, defined by $x=0$, is located in the first chart, and that of $D_{\wh\infty}$, defined by $z=0$, is located in the second chart. Moreover, $(x{:}z)$ forms a system of projective coordinates on $E\simeq\PP^1$.

In the coordinates $(x,y)$ we find\enlargethispage{-\baselineskip}%
\begin{equation}\label{eq:alphadefxy}
\ve^*\omega_{\wzeta}(x,y)=-\frac{\vi_q}{p}\,x^{-q}y^{-q}\big[qx^{p+q}-(p+q)\wzeta^{-q}x^q+p+\ro(xy)\big].
\end{equation}
In these coordinates, the indeterminacy locus of $\omega_{\wzeta}(x,y)$ consists of the points with coordinate~$x$ on the divisor $E=\{y=0\}$ which are roots of the polynomial $f_{\wzeta}(X):=f(X/\wzeta)$ with
\begin{equation}\label{eq:fX}
f(X)=qX^{p+q}-(p+q)X^q+p.
\end{equation}

\begin{lemme}\label{lem:indeterminacykappa}
Under Assumption \ref{ass:pqcoprime}, the polynomial has a single double root at $X=1$ and $(p+q)-2$ simple roots which are all nonzero and whose absolute value is~$>1$ for $p-1$ of them and $<1$ for the remaining $q-1$.\qed
\end{lemme}

As a consequence, the blow-up $\ve:Z\to\PP^1_u\times\Afu_\eta$ of $(0,\wh\infty)$ is suitable for $\ccM$ in the sense of Definition \ref{def:suitable}.

\section{The case of an elementary connection -- cohomological description}

Following Conclusion \ref{conclusion} together with Remark \ref{rem:modversusrapid}, we are now going to describe the family of subsheaves~$\scrG_{< \wpsi}$ of $\cG\subset(\wt{\pi} \circ \wt\ve)^{-1} \mathscr{F}$ and their push-forwards $R^1(\wtpi \circ \wt\ve)_* \scrG_{<\wpsi}$, where $\ve:Z\to\P^1_u \times \Af^1_\eta $ is the blow-up of $(0, \wh\infty)$.

\subsection{The sheaves \texorpdfstring{$\cG$}{G} and \texorpdfstring{$\scrG_{< \wpsi}$}{Gpsi} on the fibres of \texorpdfstring{$\wtpi$}{pi}}\label{subsec:GGpsi}

\subsubsection{The real blow-up space \texorpdfstring{$\wt Z$}{wtZ}}
Let us first describe the map $\wtpi\circ\wt\ve:\wt Z\to\wt\PP^1_\eta$. It~will be enough for us to understand the description of $(\wtpi\circ\wt\ve)^{-1}(\SS^1_{\eta=0})$. This space lies over $D_{\wh\infty}\cup E$.

Over the coordinate chart $(x,y)$ of $Z$ (see Figure \ref{figure:pblowup}) is the chart of $\wt Z$ with polar coordinates $(r_x,\vt_x,r_y,\vt_y)$. In this chart, the pull-back $\wE$ of $E$ is defined by $r_y=0$ and the map $\wtpi\circ\wt\ve$ is given by $(r_x,\vt_x,r_y,\vt_y)\mto(r_y,\vt_y)$. Therefore, in this chart, $(\wtpi\circ\wt\ve)^{-1}(\SS^1_{\eta=0})$ is the product of the annulus $[0,\infty)\times\SS^1_{x=0}$ with the circle $\SS^1_{y=0}=\SS^1_{\eta=0}$. The restriction to $\wE$ of $\wpi\circ\wt\ve$ takes values in $\SS^1_{u=0}$ and is given by $(r_x,\vt_x,\vt_y)\mto\nobreak\vt=\nobreak\vt_x+\nobreak\vt_y$. Therefore, it will be more convenient for us to use the description $\wE=[0,\infty)\times\SS^1_{u=0}\times\SS^1_{\eta=0}$, with the coordinate $\vtn=\vt_y$ on $\SS^1_{\eta=0}=\SS^1_{y=0}$.

Over the coordinate chart $(w,z)$ of $Z$ is the chart of $\wt Z$ with polar coordinates $(r_w,\vt_w,r_z,\vt_z)$. In this chart, the pull-back $\wE\cup\wD_{\wh\infty}$ of $E\cup D_{\wh\infty}$ is defined by $r_wr_z=0$ and the map $\wtpi\circ\wt\ve$ is given by $(r_w,\vt_w,r_z,\vt_z)\mto(r_wr_z,\vt_w+\vt_z)$, while the map $\wpi\circ\wt\ve$ is given by $(r_w,\vt_w,r_z,\vt_z)\mto(r_u,\vt)=(r_w,\vt_w)$. We can therefore identify $\wD_{\wh\infty}$ with~$\wt\PP^1_u\times\SS^1_{\eta=0}$ so that the restriction of the map $\wpi\circ\wt\ve$ (\resp$\wtpi\circ\wt\ve$) is identified with the first (\resp the second) projection.

As a consequence, $\wD_{\wh\infty}\cup\wE$ it obtained by gluing
\begin{equation}\label{eq:DtildeEtilde}
\begin{split}
\wD_{\wh\infty}&=\{(r_u,\vt,\vtn)\mid r_u\in[0,\infty],\;\vt\in\SS^1_{u=0},\;\vtn\in\SS^1_{\eta=0}\}\quad\text{with}\\
\wE&=\{(r_x,\vt,\vtn)\mid r_x\in[0,\infty],\;\vt\in\SS^1_{u=0},\;\vtn\in\SS^1_{\eta=0}\}
\end{split}
\end{equation}
along $r_u=0$ (\resp$r_x=\infty$) by the identity on $\SS^1_{u=0}\times\SS^1_{\eta=0}$.
\begin{figure}[htb]
\begin{center}
\includegraphics[scale=.5]{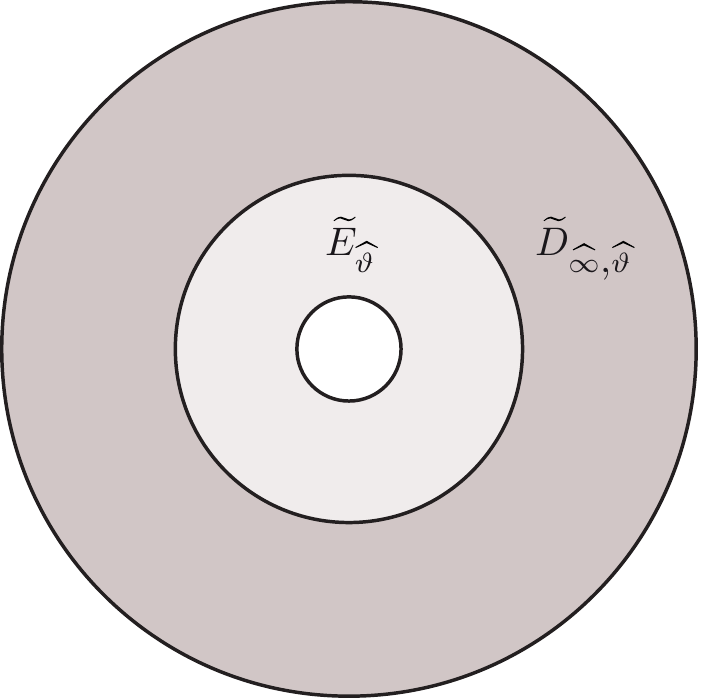}
\caption{The annulus $\wD_{\wh\infty,\vtn}\cup\wE_{\vtn}$.}\label{fig:ovA}
\end{center}
\end{figure}

\subsubsection{The sheaf \texorpdfstring{$\cG$}{G} on \texorpdfstring{$\wE\cup\wD_{\wh\infty}$}{ED}}
We denote by $\beta:B\hto\wE\cup\wD_{\wh\infty}$ the inclusion of the open subset consisting of the union of
\begin{itemize}
\item
$\wE\moins\{r_x=0\}$, $\wD_{\wh\infty}\moins\{r_u=\infty\}$,
\item
$\Big\{(0,\vt,\vtn)\mid q\vt\in\arg(\vi_q)-\rdInt\Big\}\subset\{r_x=0\}$,
\item
$\Big\{(\infty, \vt,\vtn) \mid p \vt-(p+q)\vtn\in -\arg(\vi_q)+\rdInt\Big\}\subset\{r_u=\infty\}$.
\end{itemize}

\begin{lemme}\label{lem:cG}
Let us still denote by $\wpi\circ\wt e$ the projection $\wE\cup\wD_{\wh\infty}\to\wt\PP^1_u$. Then $\cG_{|\wE\cup\wD_{\wh\infty}}=\beta_!(\wpi\circ\wt e)^{-1}\cF$.
\end{lemme}

\begin{proof}
By analyzing the map $\wt e:\wt Z\to\wt\PP^1_u\times\wt\Afu_\eta$ in the neighbourhood of $\wE\cup\wD_{\wh\infty}$.
\end{proof}

\subsubsection{The sheaves \texorpdfstring{$\scrG_{<\wpsi}$}{Gpsi} on \texorpdfstring{$\wE\cup\wD_{\wh\infty}$}{ED}}
Assume that $\wpsi=\wvi_{\wzeta}$. According to Definition \ref{def:Gpsihat} (in~the rapid decay setting) and recalling that $\Exp=\{\vi\}$, we obtain:

\begin{lemme}\label{lem:Balpha}
The restriction to $\wE\cup\wD_{\wh\infty}$ of the sheaf $\scrG_{<\wvi_{\wzeta}}$ is the subsheaf of $\cG$ given~by
\begin{starequation}\label{eq:NUStrich}
\shG_{< \wvi_{\wzeta}|\wE\cup\wD_{\wh\infty}}=(\beta_{\omega_{\wzeta}>0})_! \cG_{|\wE\cup\wD_{\wh\infty}}
\end{starequation}
where $\beta_{\omega_{\wzeta}>0}$ denotes the inclusion of the open subset
\begin{starstarequation} \label{eq:Balpha}
B_{\omega_{\wzeta}>0} :=\big\{\wz\in\wE\cup\wD_{\wh\infty}\mid 0 \lewtz\ve^* \omega_{\wzeta}(u, \eta)\big\} \hto\wE\cup\wD_{\wh\infty}.
\end{starstarequation}%
\end{lemme}

\begin{proof}
Let us make precise the reasoning at the points $\wz$ whose projection on $E\cup Z$ is an indeterminacy point for $\omega_{\wzeta}$. On $D_{\wh\infty}$, the factor $\ve^*\omega_{\wzeta}$ written in the coordinates $(w,z)$ reads as
$$
\ve^*\omega_{\wzeta}(w,z)=\frac{1}{w^q z^{p+q}} \cdot \lambda_{\wzeta}(w,z)
$$
for some holomorphic function $\lambda_{\wzeta}$ such that $\lambda_{\wzeta}(w,0)=\sfrac{-q \vi_q}{p}\neq 0$, a value which is independent of $\wzeta\in\mu_{p+q}$. Hence there is no indeterminacy point of $\omega_{\wzeta}$ on $D_{\wh\infty}$. We conclude from Lemma \ref{lem:indeterminacykappa} that these points lie on $E\moins(D_0\cup D_{\wh\infty})$ and that, in a suitable local coordinate $v$ on $E$ vanishing at such a point, $\ve^*\omega_{\wzeta}$ can be written as $v^a/y^q$ with $a=1$ or $a=2$. In any punctured neighbourhood of a point $(v=0,\vtn)$, the argument of $\ve^*\omega_{\wzeta}$ takes all possible values, therefore such a point cannot be inside $B_{\omega_{\wzeta}>0}$ (nor inside $B_{\omega_{\wzeta}<0}$). It thus lies on the boundary of $B_{\omega_{\wzeta}>0}$ and, according to a computation similar to \cite[Lem.\,A.1]{Bibi13}, Formula \eqref{eq:NUStrich} also holds at such a point.
\end{proof}

In other words, we will forget the indeterminacy points in the remaining part of this article.

\subsubsection{The sheaf \texorpdfstring{$\shG_{<\wvi_{\wzeta}}$}{Gpsi} on \texorpdfstring{$\wD_{\wh\infty,\vtn}$}{D}} \label{sec:sheafD}

We have
\begin{equation}\label{eq:BalphinD}
B_{\omega_{\wzeta}>0}\cap\wD_{\wh\infty}=
\Big\{(r_u, \vt,\vtn) \mid p \vt-(p+q)\vtn\in -\arg(\vi_q)+\rdInt\Big\}.
\end{equation}
This is the pull-back under the radial projection $[0,\infty]\ni r_u\mto\infty$ of the set \hbox{$B\cap\{r_u=\infty\}$} considered before Lemma \ref{lem:cG} and is independent of $\wzeta$. As a consequence, on $\wD_{\wh\infty}$, $\shG_{<\wvi_{\wzeta}|\{r_u=\infty\}}$ is equal to $\cG_{|\{r_u=\infty\}}$ and $\shG_{<\wvi_{\wzeta}|\{r_u<\infty\}}\subset\cG_{|\{r_u<\infty\}}$.

\subsubsection{The sheaf \texorpdfstring{$\shG_{<\wvi_{\wzeta}}$}{Gpsi} on \texorpdfstring{$\wE_{\vtn}$}{E}}
From \eqref{eq:alphadef} and \eqref{eq:alphadefxy}, the condition $0\lewtz\ve^* \omega_{\wzeta}$ for the point $\wz=(r_{x}, \vt, 0, \vtn)\in\wE$ lying over $(x,0)\in E\moins\wzeta f^{-1}(0)$ reads
\[
q \vt -\arg(f(\wzeta^{-1} x))\in\arg(\vi_q) +\rdInt\pmod{2\pi}.
\]
Therefore
\begin{equation}\label{eq:alphmz1}
B_{\omega_{\wzeta}>0}\cap\wE=\Big\{(r_x, \vt,\vtn) \mid q \vt -\arg(f(\wzeta^{-1} x))\in\arg(\vi_q) +\rdInt\bmod2\pi\Big\}.
\end{equation}

From \eqref{eq:Balpha} it clearly follows:

\refstepcounter{equation}\label{increasingB}
\noindent\eqref{increasingB}\enspace
For each fixed $\vtn$, the fibres $(B_{\omega_{\wzeta}>0}\cap\wE)_{\vtn}$ are increasing with respect to $\wzeta\in\nobreak\mu_{p+q}$ ordered according to the order of the family $(\wvi_{\wzeta})_{\wzeta}$ at $\vtn$.\par\smallskip

Let us observe that for $|x|\gg 1$, we have $q \vt_{x}-\arg(f(\wzeta^{-1} x)) \sim -p \vt_{x}$,
so that in the limit $|x|\to\infty $, we have
\begin{equation}\label{eq:Evt0atinfty}
B_{\omega_{\wzeta}>0}\cap\wE\cap\{r_x=\infty\}=\Big\{(\infty, \vt,\vtn) \mid p \vt -(p+q) \vtn\in -\arg(\vi_q)+\rdInt\Big\},
\end{equation}
which is independent of $\wzeta\in\mu_{p+q}$ and coincides with $B_{\omega_{\wzeta}>0}\cap\wD_{\wh\infty}\cap\{r_u=0\}$ (see \eqref{eq:BalphinD}).

Similarly, for $|x|\to 0$, we see that $-q \vt_x+\arg(f(\wzeta^{-1}x)) \sim -q \vt_x$ and hence
\begin{equation}\label{eq:Evt0at0}
B_{\omega_{\wzeta}>0}\cap\wE\cap\{r_x=0\}=\Big\{(0, \vt,\vtn) \mid q \vt\in\arg(\vi_q) +\rdInt\Big\},
\end{equation}
is independent on $\wzeta\in\mu_{p+q}$, and coincides with $B\cap\{r_x=0\}$.

As a consequence, $\shG_{<\wvi_{\wzeta}|\wE}$ is obviously contained in $\cG_{|\wE}$, and coincides with $\cG$ on $\{r_x=0\}$.

\subsection{The Stokes-filtered sheaf \texorpdfstring{$(\protect\wtrho^{-1}\protect\wh{\cL}, \protect\wh{\cL}_\bbullet)$}{rhoL}}
From Theorems \ref{th:F0inftytop} and \ref{th:RHF0infty} in the rapid decay case and after the ramification $\wtrho$, we obtain:

\begin{corollaire}\label{cor:topwhL}
The Stokes-filtered sheaf $(\wtrho^{-1}\wh\cL,\wh\cL_\bbullet)$ on $\SS^1_{\eta=0}$ is given by:
\begin{itemize}
\item
$\wtrho^{-1}\wh\cL=R^1(\wtpi \circ \wt\ve)_* \scrG$,
\item
for each $\wzeta\in\mu_{p+q}$, $\wh\cL_{<\wvi_{\wzeta}}=R^1(\wtpi \circ \wt\ve)_* \scrG_{<\wvi_{\wzeta}}$.
\end{itemize}
In particular, the latter is contained in the former.\qed
\end{corollaire}

We sill simplify the topological situation in order to ease the explicit computation of these objects. We continue to restrict the sheaves to $\wE\cup\wD_{\wh\infty}$. We notice that
$B_{\omega_{\wzeta}>0}\cap\wD_{\wh\infty}$ defined by \eqref{eq:BalphinD} is independent of~$\wzeta$. We thus set $B^\circ:=(B\cap\wE)\cup(B_{\omega_{\wzeta}>0}\cap\wD_{\wh\infty})$ (for any $\wzeta$). We have, for any $\wzeta$, the inclusions $B_{\omega_{\wzeta}>0}\subset B^\circ\subset B$. The first inclusion is an equality on $\wD_{\wh\infty}$, while the second one is so on $\{r_u=\infty\}$. We denote by $\beta^\circ$ the inclusion $B^\circ\hto \wE\cap\wD_{\wh\infty}$ and we set $\cG^\circ:=(\beta^\circ)_!(\wpi\circ\wt e)^{-1}\cF$. We thus have a sequence of inclusions $\scrG_{<\wvi_{\wzeta}}\hto\cG^\circ\hto\cG$.

\begin{lemme}\mbox{}
\begin{enumerate}
\item
The inclusion $\cG^\circ\hto\cG$ induces an isomorphism $R^1(\wtpi \circ \wt\ve)_* \scrG^\circ\isom R^1(\wtpi \circ \wt\ve)_* \scrG$.
\item
The natural restriction morphisms
\[
R^1(\wtpi \circ \wt\ve)_* \scrG^\circ\to R^1(\wtpi \circ \wt\ve)_* \scrG^\circ_{|\wE}\quad\text{and}\quad R^1(\wtpi \circ \wt\ve)_* \scrG_{<\wvi_{\wzeta}}\to R^1(\wtpi \circ \wt\ve)_* \scrG_{<\wvi_{\wzeta}|\wE}
\]
are isomorphisms.
\qed
\end{enumerate}
\end{lemme}

We can therefore replace, in Corollary \ref{cor:topwhL}, the triple $(\wE\cup\wD_{\wh\infty},(B_{\omega_{\wzeta}>0})_{\wzeta\in\mu_{p+q}},B)$ with $(\wE,(B_{\omega_{\wzeta}>0}\cap\wE)_{\wzeta\in\mu_{p+q}},B^\circ\cap\wE)$ and the pair $((\scrG_{<\wvi_{\wzeta}})_{\wzeta\in\mu_{p+q}},\cG)$ with the pair $((\scrG_{<\wvi_{\wzeta}|\wE})_{\wzeta\in\mu_{p+q}},\cG^\circ_{|\wE})$.

\subsection{Stokes data for \texorpdfstring{$\protect\wrho\protect\ccF^{(0,\infty)}\ccM$}{rhoFM} -- abstract version}\label{subsec:Stokesabstract}
The computation of the Stokes data can be done with the simplified model above. Namely, we denote by~$\wA$ the annulus $[0,\infty]\times\SS^1_{u=0}$ with coordinates $(r,\vt)$, where $r$ now abbreviates $r_x$. The open annulus $A$ is defined as $(0,\infty)\times\SS^1_{u=0}$ and the boundary components are denoted by $\bin\ov A$ ($r=0$) and $\bout\ov A$ ($r=\infty$). We denote by $\wt\cF$ the local system on $\wA\times\SS^1_{\eta=0}\simeq\wE$ pull-back of $\cF$ by $\wpi\circ\wt e$ and we consider the subsets $B_{\omega_{\wzeta}>0}$ and $B$, which now denote the intersection of $B_{\omega_{\wzeta}>0}$ (\resp$B^\circ$) with $\wA\times\SS^1_{\eta=0}$. We will denote by~$\wt\cF{}^{\vtn}$ the restriction of $\wt\cF$ to the fibre $\wA\times\{\vtn\}$ (so $\wt\cF{}^{\vtn}$ is canonically equal to $\cF$), and similarly for $B_{\omega_{\wzeta}>0}$ and $B$, as well as for $\beta_{\omega_{\wzeta}>0}$ and $\beta$.

Let us choose an \angle $\vtn_o\in\SS^1_{\eta=0}$ such that the set $\{\vtn_\ell:=\vtn_o+\sfrac{\ell\pi}{q} \mid \ell\in\Z\}$ does not contain a Stokes direction for any difference of two exponential factors $\wvi_{\wzeta}$ and $\wvi_{\wzeta'}$ for $\wzeta,\wzeta'\in\mu_{p+q}$. Let $I_\ell:=[\vtn_\ell, \vtn_{\ell+1}] $.
We then define $\bLMH_\ell :=H^1(\wA, (\beta^{\vtn_\ell})_!\wt\cF{}^{\vtn_\ell})$ for $\ell=0,\dots, 2q-1$ and
$S_\ell^{\ell+1}: \bLMH_\ell\to\bLMH_{\ell+1}$
by the diagram of isomorphisms induced by the restriction isomorphisms
\begin{equation}\label{eq:Labstr}
\begin{array}{c}
\xymatrix@C=.5cm{
& H^1(\wA \times I_\ell, (\beta^{I_\ell})_!\wt\cF) \ar[dl]_{\cong} \ar[dr]^{\cong} \\
H^1(\wA, (\beta^{\vtn_\ell})_!\wt\cF{}^{\vtn_\ell}) \ar[rr]^{S_\ell^{\ell+1}} &&
H^1(\wA, (\beta^{\vtn_{\ell+1}})_!\wt\cF{}^{\vtn_{\ell+1}})
}
\end{array}
\end{equation}
We arrange the elements of $\mu_{p+q}$, identified with $\wh\Exp$ through $\wzeta\mto\wvi_{\wzeta}$, according to the ordering \eqref{eq:levt} at $\vtn_o $:
\[
\mu_{p+q}=\{\wzeta_0 <_{\vtn_o} \cdots <_{\vtn_o} \wzeta_{p+q-1}\}.
\]
This ordering repeats itself at $\wzeta_\ell$ for each even $\ell=2 \mu$ and is completely opposite for each odd $\ell=2 \mu +1$. From Corollary \ref{cor:topwhL} in this simplified setting, we conclude that the family
\begin{equation}\label{eq:Filell}
H^1\bigl(\wA, (\beta^{\vtn_\ell}_{\omega_{\wzeta_k>0}})_!\wt\cF{}^{\vtn_\ell}\bigr)_{k=0,\dots,p+q-1}
\end{equation}
forms a increasing (\resp decreasing) filtration of $H^1(\wA, (\beta^{\vtn_\ell})_!\wt\cF{}^{\vtn_\ell})$ for $\ell$ even (\resp$\ell$~odd) in $\{0,\dots,2q-1\}$. We denote by $F \bLMH_\ell$ such an increasing/decreasing filtration. We can conclude:

\begin{theoreme}\label{theo:Stokesdataabstract}
The Stokes structure of the de-ramified local Fourier transform $\wrho\protect\ccF^{(0,\infty)}\ccM$ of the elementary meromorphic connection $\ccM=\El(\rho, -\vi, \ccR)$ can be represented by the following linear Stokes data:
\begin{itemize}
\item
the vector spaces $\bLMH_\ell=H^1\bigl(\wA, (\beta^{\vtn_\ell})_!\wt\cF{}^{\vtn_\ell}\bigr)$ --see \eqref{eq:Labstr}-- for $\ell=0,\dots, 2q-1$,
\item
the isomorphisms $S_\ell^{\ell+1}$ from \eqref{eq:Labstr},
\item
the filtrations $F\bLMH_\ell$ given by \eqref{eq:Filell}.\qed
\end{itemize}
\end{theoreme}

\section{Topology of the support of the sheaves \texorpdfstring{$\shG_{<\protect\wvi_{\wzeta}}$}{Gwvi} on the annulus}\label{sec:support}

In this section, we will describe the shape of the support $B^{\vtn}_{\omega_{\wzeta}>0}$ inside the annulus~$\wA$ in order to obtain Corollary \ref{cor:Geta2} below.

\subsection{Morse theory on the closed annulus}
In this section, it will be more convenient to use the argument $\arg x=\vt_x$, related to $\vt$ as follows:
\begin{equation}\label{eq:vtxvt}
\vt=\vt_x+\vtn.
\end{equation}
We consider the function (\cf\eqref{eq:fX})
\begin{align*}
\wA \smallsetminus \{x \mid f(x)=0\} &\To{G} \SS^1, \\[-3pt]
x &\Mto{\hphantom{G}} \theta=\arg(f(x)/x^q)=-q \vt_x+\arg f(x)
\end{align*}
with $f(x)$ given by \eqref{eq:fX}. For $\wzeta\in\mu_{p+q}$ and $\vtn\in\SS^1_{\eta=0}$, we set
\[
G_{\wzeta}^{\vtn}(x):=G(\wzeta^{-1}x)-q(\vtn+\arg\wzeta)+\arg(\vi_q)=-q \vt+\arg(\vi_q)+\arg f(\wzeta^{-1}x).
\]
We have
\begin{equation}\label{eq:Bvt0}\textstyle
B^{\vtn}_{\omega_{\wzeta}>0}= (G_{\wzeta}^{\vtn})^{-1}\bigl(\rdInt\bigr).
\end{equation}

Let us denote by $b:\wA{}'\to\wA$ the real oriented blow-up of the roots of the polynomial $f(x)$ (where $G$ is not defined) and let us set $A'=A\moins f^{-1}(0)$. We will use classical Morse-theoretic arguments for the extension of $G$ to the manifold with boundary~$\wA{}'$ (\cf \cite[\S3]{H-L73}).

\begin{lemme} \label{lem:claim1}
The function $G$ extends to a differentiable function on $\wA{}'$. Its restriction to $\partial\wA{}'$ has no critical point. The set of critical points in $A'$, which are all of Morse type of index one, is equal to $\mu_{p+q}-\{1\}$ embedded in $\{x\mid|x|=1\}$. To each critical point $\wzeta\in\mu_{p+q}-\{1\}$ corresponds the critical value $\arg(\wzeta^p-1)=\frac\pi2+\frac{\kappa\pi}{p+q}\bmod2\pi$ for some $\kappa=1,\dots,p+q-1$, which belongs to $(\frac\pi2,\frac{3\pi}{2})$. The critical value $\frac\pi2+\frac{\kappa\pi}{p+q}\bmod2\pi$ comes from exactly one critical point, which is $\exp(\frac{2qk\pi}{p+q})$ for $k\in\{1,\dots,p+q-1\}$ such that $k\equiv a\kappa\bmod(p+q)$, with $a$ as in \eqref{eq:ordev}.
\end{lemme}

\begin{proof}
The assertion on the critical points and their values is obtained by noticing that $G$ is obtained as the composition of the function $x\mto f(x)/x^q$ from $A'$ to~$\CC^*$ with the projection to $\SS^1$.

Let us consider the local behaviour of $G$ around a root $x_o\in\wA$ of $f(x)$. Let~$\SS^1_{x_o}$ denote the boundary circle over $x_o$ and assume first that $x_o$ is a simple root. The function $G$ then reads
$$
G(x)=\arg(\sfrac{f(x)}{x^q})=\arg((x-x_o) \cdot \sfrac{g(x)}{x^q})
$$
in a neighborhood $\SS^1_{x_o}\subset \wA{}' $ for some polynomial $g(x)$ which does not vanish at~$x_o$. This shows that $G$ extends differentiably to $\SS^1_{x_o}$ and its restriction $\SS^1_{x_o}\to\SS^1$ has maximal rank everywhere. Locally around the double zero $x=1$, the situation amounts to
$$
G(x)=\arg\bigl((x-1)^2\cdot \sfrac{g(x)}{x^q}\bigr)
$$
with $g(x)$ non-vanishing at $x=1$. The same arguments as above apply regarding the extension of $G$ to $\SS^1_1$. Lastly, the behaviour of $G$ on $\partial\wA$ has been given in \S\ref{subsec:GGpsi}.
\end{proof}

As a consequence, we can regard $G$ as a Morse function on the pair $(\wA{}',\partial\wA{}')$, and also on the pair $(\wA\moins f^{-1}(0),\partial\wA)$ (\cf\cite[Th.\,3.1.6]{H-L73}).

\begin{proposition}\label{prop:G1}
For each $\theta\in\SS^1$, each connected component of the subset $B^\theta:=
G^{-1}\bigl(\theta+\nobreak(\tfrac{\pi}{2},\tfrac{3\pi}{2})\bigr)\subset \wA\moins f^{-1}(0)$ is homeomorphic to the union of an open disc with holes (\ie interior closed discs deleted) and nonempty disjoint open intervals (at~least one) in its boundary. It is embedded in $\wA$ in such a way that the nonempty open intervals in the boundary are contained in $\partial \wA$.
\end{proposition}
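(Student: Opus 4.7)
My approach is Morse-theoretic on the real blow-up $\wA'$, where Lemma \ref{lem:claim1} ensures that $G$ extends smoothly, has only $p+q-1$ interior critical points (all saddles of Morse index~$1$) and no critical points on $\partial\wA'$. I first restrict to $\theta$ for which $\theta+\pi/2$ and $\theta+3\pi/2$ are both regular values of $G$; the finitely many exceptional values are dealt with afterwards by a continuity/limiting argument. Under this assumption, $G^{-1}(\theta+\pi/2)$ and $G^{-1}(\theta+3\pi/2)$ are disjoint, smoothly embedded $1$-submanifolds of $\wA'$ meeting $\partial\wA'$ transversally, forming a finite collection of embedded arcs (with endpoints on $\partial\wA'$) and circles in the interior. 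In particular $B^\theta$ has finitely many connected components.

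The intersection of $B^\theta$ with $\partial\wA$ is described explicitly: by \eqref{eq:Evt0at0} and \eqref{eq:Evt0atinfty}, $G|_{\{r=0\}}$ has degree $-q$ and $G|_{\{r=\infty\}}$ has degree $-p$, so $B^\theta\cap\{r=0\}$ (\resp $B^\theta\cap\{r=\infty\}$) is a disjoint union of $q$ (\resp $p$) open arcs. These arcs are the candidates for the ``open intervals in the boundary'' of the statement.

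For the homeomorphism type of each component, I would use the planarity of $\wA'$: topologically it is a genus-$0$ surface with $p+q+1$ boundary circles. Any connected open sub-surface with finitely generated fundamental group is planar and, being bounded by finitely many smooth arcs and circles thanks to the Morse structure, is homeomorphic to an open disc with finitely many closed discs removed. Applied to a connected component $C$ of $B^\theta$, together with the open arcs of $\partial\wA$ contained in its closure, this yields the claimed model.

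The main obstacle is to show that every connected component $C$ must meet $\partial\wA$ in at least one arc. Since every interior critical point of $G$ has Morse index $1$, the function $G$ admits \emph{no} interior local extremum on $\wA'$, and consequently no closed regular level curve of $G$ can bound a compact region contained in $\mathrm{int}(\wA')$: such a region would force an interior extremum, contradicting Lemma \ref{lem:claim1}. Applying this principle, one lifts $G|_C$ to a real-valued function $\tilde G:C\to(\theta+\pi/2,\theta+3\pi/2)$ (possible since the image of $G|_C$ lies in a contractible arc), and observes that on $\bar C$ both $\sup\tilde G$ and $\inf\tilde G$ must be attained on $\partial C$. Combining the no-closed-bounding-curve principle with an inductive argument tracking the additional boundary circles $\SS^1_{x_i}$ over the zeros of $f$ (on each of which $G$ is a degree $\pm1$ or $\pm 2$ covering), one shows that $\bar C$ necessarily propagates out to $\partial\wA$. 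Carefully threading the level curves to $\partial\wA$ while controlling the auxiliary circles over the zeros of $f$ is the delicate part of the proof.
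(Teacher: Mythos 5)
Your approach differs genuinely from the paper's: you work abstractly with Morse theory on the real blow-up $\wA'$, invoke planarity to pin down the homeomorphism type of each component, and argue that the absence of interior extrema should force components to reach $\partial\wA$. The paper instead starts from an explicit description of the single level set $G^{-1}(0)$ (Lemma~\ref{lem:G1} and the Claims), proved by direct algebraic manipulation of \eqref{eq:sinsin}--\eqref{eq:sinsign}, and then does a finite induction on $\theta$, attaching a $1$-handle each time a critical value $\theta_\kappa$ is crossed (Figure~\ref{fig:B2thetacrit}). The paper's route is concrete and settles the boundary-contact issue at $\theta=\pi$ once and for all; handle attachments only merge components that already meet $\partial\wA$.

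The genuine gap in your proposal is precisely the step you yourself flag as ``delicate'': showing that every connected component $C$ of $B^\theta$ actually touches $\partial\wA$. The ``no interior extremum'' principle (all interior critical points are saddles) does prevent a closed regular level curve from bounding a disc in $\mathrm{int}(\wA')$, but it does \emph{not} exclude a component $C$ whose frontier consists of level arcs together with arcs on the blown-up circles $\SS^1_{x_i}$ above the roots of $f$, without reaching $\partial\wA$. The lift $\tilde G:\bar C\to[\theta+\pi/2,\theta+3\pi/2]$ attains its extrema on $\partial C$, but the level curves $G^{-1}(\theta+\pi/2)$ and $G^{-1}(\theta+3\pi/2)$ are already part of $\partial C$, so this observation yields no contradiction. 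Ruling out components trapped between the circles $\SS^1_{x_i}$ requires input beyond abstract Morse theory: the paper uses the specific algebraic structure of $f$ (via the Claims, which exploit coprimality of $p,q$, the sign analysis of $\sin(p\vt_x)$, $\sin(q\vt_x)$, and the fact $\{x=e^{i\vt_x}\mid\vt_x\neq0\}\cap G^{-1}(0)=\emptyset$) to show that each component of $G^{-1}(0)$ runs exactly from one end of $\partial\wA$ to a single root of $f$. Without an analogue of that input, the ``threading'' and ``inductive argument tracking the additional boundary circles'' remain unproved. (Minor point: on $\{r=\infty\}$ one has $G(x)\to +p\vt_x$, so the degree there is $+p$, not $-p$; this does not affect the count of $p$ arcs but should be corrected.)
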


\begin{proof}
We will start with the case $\theta=\pi$.

\begin{lemme}\label{lem:G1}
The subset $
G^{-1}\bigl((-\tfrac{\pi}{2},\tfrac{\pi}{2})\bigr)\subset \wA\moins f^{-1}(0)$ is homeomorphic to a disjoint union of $(p+q)$ semi-closed discs, each of them being a closed disc with a closed interval in its boundary deleted, embedded in $\wA$ in such a~way that the remaining nonempty open interval in the boundary is contained in $\partial \wA$.
\end{lemme}

Consider the annulus $\wA=[0,\infty] \times \SS^1$ with coordinate $(r,\vt_x)$. We will use the notation
\begin{align}\label{eq:evDeltain}
\begin{cases}
\evDin_m :=\{r=0\} \times\bigl(\frac{2m \pi}{q}- \frac{\pi}{2q}, \frac{2m \pi}{q}+\frac{\pi}{2q}\bigr),\\[8pt]
\oddDin_m :=\{r=0\} \times\bigl(\frac{(2m+1)\pi}{q}- \frac{\pi}{2q}, \frac{(2m+1)\pi}{q}+\frac{\pi}{2q}\bigr),
\end{cases}
&m=0,\dots,q-1\\[5pt]
\begin{cases}
\evDout_n :=\{r=\infty\} \times\bigl(\frac{2n\pi}{p}- \frac{\pi}{2p}, \frac{2n\pi}{p}+\frac{\pi}{2p}\bigr),\\[8pt]
\oddDout_n :=\{r=\infty\} \times\bigl(\frac{(2n+1)\pi}{p}- \frac{\pi}{2p}, \frac{(2n+1)\pi}{p}+\frac{\pi}{2p}\bigr),
\end{cases}
&n=0,\dots,p-1.\label{eq:evDeltaout}
\end{align}

\begin{figure}[htb]
\begin{minipage}{.5\textwidth}
\begin{center}\setlength{\unitlength}{.57mm}
\begin{picture}(80,70)
\put(0,0){\includegraphics[scale=0.23]{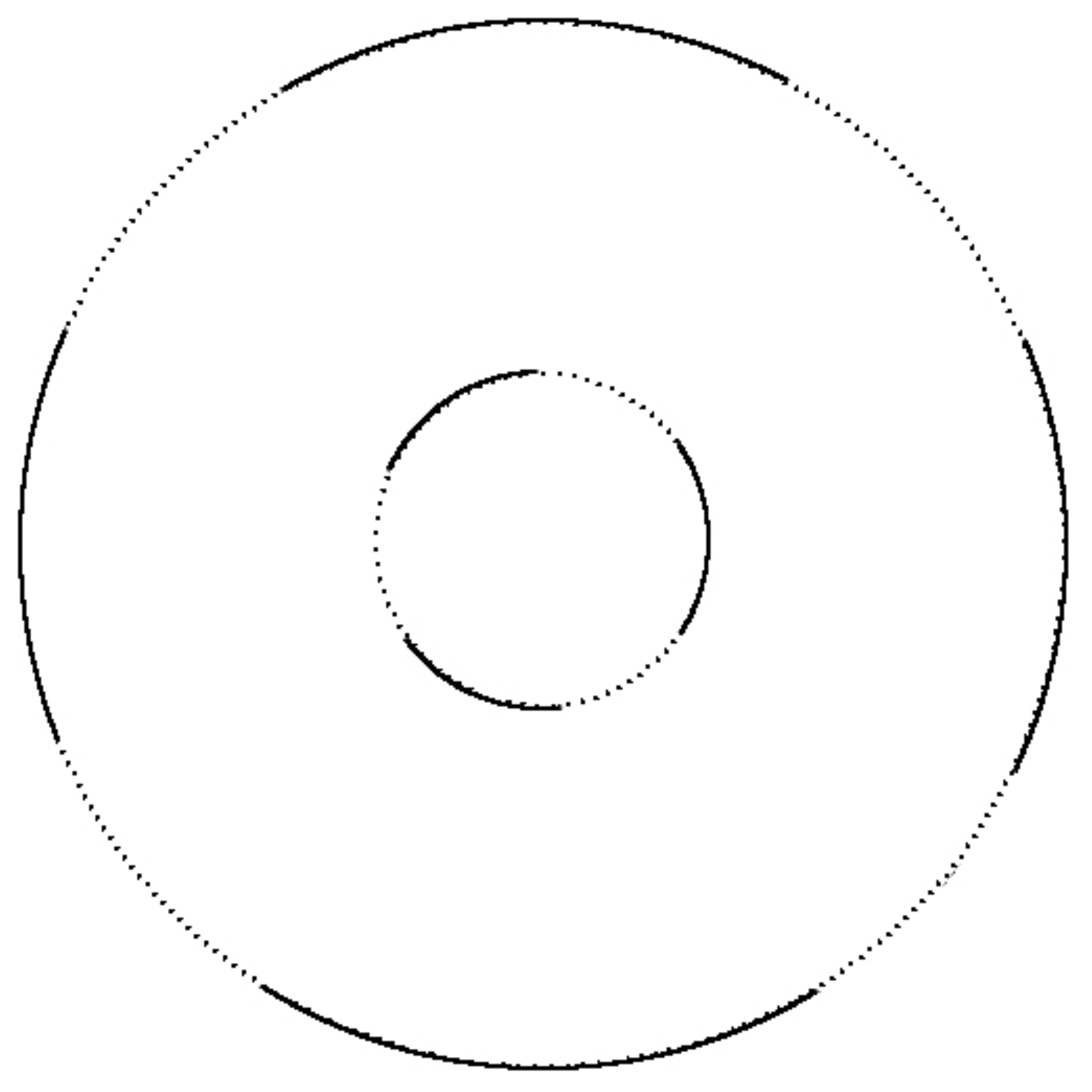}}
\put(73,30){\footnotesize $\evDout_0$}
\put(65,60){\footnotesize $\oddDout_0$}
\put(-4,70){\footnotesize $\evDout_1$}
\qbezier(10,72)(15,72)(23,70)
\put(-10,58){\footnotesize $\oddDout_1$}
\qbezier(3,62)(5,64)(11,62)

\put(49,35){\footnotesize $\evDin_0$}
\put(40,48){\footnotesize $\oddDin_0$}
\put(19,48){\footnotesize $\evDin_1$}
\end{picture}
\caption{The intervals $\evDin_m$, $\oddDin_m$, $\evDout_n$, $\oddDout_n$ in the chosen numbering.}\label{figure:fibreDelta}
\end{center}
\end{minipage}
\hfill
\begin{minipage}{.5\textwidth}
\begin{center}
\includegraphics[scale=0.5]{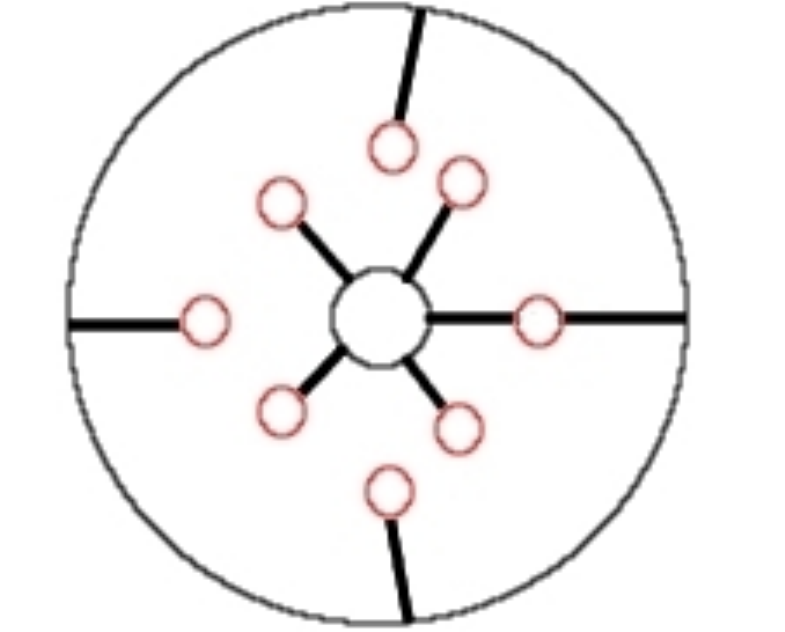}
\caption{The topological space $G^{-1}(0)$. Each of the red circles stands for a (deleted) root of $f(x)$. The number of curves starting at $\SS^1_{r=0}$ is $q$, the number of curves ending in $\SS^1_{r=\infty}$ is $p$.}\label{figurestarthtpy}
\end{center}
\end{minipage}
\end{figure}

We then have\enlargethispage{-\baselineskip}%
\[
\bin G^{-1}\bigl((-\tfrac{\pi}{2},\tfrac{\pi}{2})\bigr)=\bigcup_{m=0}^{q-1}\evDin_m,\qquad
\bout G^{-1}\bigl((-\tfrac{\pi}{2},\tfrac{\pi}{2})\bigr)=\bigcup_{n=0}^{p-1}\evDout_n.
\]

\begin{proof}[Proof of Lemma \ref{lem:G1}]
By Morse theory (\cf\loccit), $G^{-1}\bigl((-\frac{\pi}{2},\frac{\pi}{2})\bigr)$ is diffeomorphic to the product $G^{-1}(0)\times(-\frac{\pi}{2},\frac{\pi}{2})$. It is therefore enough to check that $G^{-1}(0)$ is the disjoint union of $(p+q)$ semi-closed intervals with closed end points on $\partial A$. Since the closure $\ov{G^{-1}(0)}$ cuts the boundary $\partial\wA{}'$ transversally, we conclude that
\begin{itemize}
\item
in the neighbourhood of $\evDin_m$, $G^{-1}(0)$ is a smooth curve abutting to \hbox{$(0,\frac{2m\pi}{q})$} ($m=0,\dots,q-1$),
\item
in the neighbourhood of $\evDout_n$, $G^{-1}(0)$ is a smooth curve abutting to \hbox{$(\infty,\frac{2n\pi}{p})$} ($n=0,\dots,p-1$),
\item
in the neighbourhood of a simple root of $f$, $G^{-1}(0)$ is a smooth curve with one end in this puncture,
\item
in the neighbourhood of the double root of $f$, $G^{-1}(0)$ is a smooth curve with two ends in this puncture.
\end{itemize}

The lemma is then a consequence of the following properties.

\pagebreak[2]
\begin{claims*}[See Figure \ref{figurestarthtpy}]\mbox{}
\begin{enumerate}
\item
The connected component of $G^{-1}(0)$ abutting to \hbox{$(0,\frac{2m\pi}{q})$} ($m=1,\dots,q-1$) is a semi-closed interval with its open end at some simple root of $f$ with absolute value~$<1$.
\item
The connected component of $G^{-1}(0)$ abutting to \hbox{$(\infty,\frac{2n\pi}{p})$} ($n=1,\dots,p-1$) is a semi-closed interval with its open end at some simple root of $f$ with absolute value~$>1$.
\item
The connected components of $G^{-1}(0)$ abutting to $(0,0)$ and $(\infty,0)$ are semi-closed intervals with open end at $x=1$.
\end{enumerate}
\end{claims*}

\subsubsection*{Proof of the claims}
Writing $x=re^{i\vt_x}$ with $r\in(0,\infty)$, one has
\begin{equation}\label{eq:sinsin}
G(x)\in\{0, \pi\}\Longleftrightarrow \frac{f(x)}{x^q}\in\R \smallsetminus \{0\}
\Longleftrightarrow q r^{p+q} \sin(p \vt_x) - p \sin(q \vt_x)=0.
\end{equation}
We consider a fixed \angle $\vt_x$. If $\sin(q \vt_x)=0=\sin(p \vt_x)$ -- which is the case for $\vt_x=0$ or $\vt_x=\pi $ only, since we assume $p,q$ to be co-prime -- then all $r>0$ are a solutions to the last equation. Since $p$ or $q$ is odd, $f(x)/x^q=qx^p-(p+q)+px^{-q}$ is negative for $x<0$, so $(0,\infty)\times\{\vt_x=\pi\}$ is contained in $G^{-1}(\pi)$. On the other hand, $f(x)/x^q$ is $>0$ on $[(0,1)\cup(1,\infty)]\times\{\vt_x=0\}$, hence the third claim.

In case $\sin(p\vt_x)\neq 0$, the equation reads
\begin{equation}\label{eq:sinsign}
r^{p+q}=\frac{p}{q} \cdot \frac{\sin(q\vt_x)}{\sin(p\vt_x)},
\end{equation}
and has exactly one positive solution $r>0$ whenever $\sin(p\vt_x)$ and $\sin(q \vt_x)$ have the same sign, and no solution otherwise.

Now, for fixed $r $, we see that for small $r \ll 1$, Equation \eqref{eq:sinsin} has exactly $q$ solutions~$\vt_x$ contributing to $G^{-1}(0)$, given in the limit $r \searrow 0$ by the \angles \hbox{$\{\frac{2m \pi}{q} \mid m=0,\dots q-1\}$}. For large $r \gg 1$, it has exactly $p$ solutions~$\vt_x$ contributing to $G^{-1}(0)$, given in the limit $r \nearrow\infty $ by \hbox{$\{\frac{2n \pi}{p} \mid n=0,\dots, p-1\}$}.

Since $0$ and $\pi $ are the only common zeroes of $\sin(q \vt_x)$ or $\sin(p\vt_x)$, \ie at each other zero of one of these functions, the corresponding function changes its sign whereas the other one doesn't. This means that locally at these \angles there is an open sector (with the \angle as a boundary of it) where equation \eqref{eq:sinsin} does not admit any solution $r>0$. As a consequence, between each of the \angles
$$
\Big\{\frac{2m\pi}{q} \mid m=0,\dots, q-1\Big\} \cup \Big\{\frac{2n\pi}{p} \mid n=0,\dots, p-1\Big\}
$$
there is an open sector such that equation \eqref{eq:sinsin} has no solution $r>0$ for any \angle $\vt_x$ inside this sector.

Lastly, for $|x|=1$, we get $G(x)=0$ if and only if $p \sin(q \vt_x)=q \sin(p \vt_x)$ and $q \cos(p\vt_x)+p \cos(q \vt_x) -(p+q) > 0
$, which cannot occur. Therefore
\begin{equation}\label{eq:S1in}
\{x=e^{i \vt_x} \mid \vt_x\neq 0\}\cap G^{-1}(0)=\emptyset,
\end{equation}
keeping in mind that $x=1$ is a root of $f(x)$.

Therefore, a connected component of $G^{-1}(0)$ meeting $\{r=0\}$ (\resp $\{r=\nobreak\infty\}$) at a nonzero argument cannot end at another similar point, either because it cannot cross a forbidden sector, so cannot end at $\{r=0\}$ (\resp $\{r=\infty\}$), or because it cannot cross the circle $r=1$, so cannot end at $\{r=\infty\}$ (\resp $\{r=0\}$). Such a component must then have an end at a simple root of $f$. This gives the first two claims, according to the last part of Lemma \ref{lem:indeterminacykappa}.
\end{proof}

\subsubsection*{End of the proof of Proposition \ref{prop:G1}}
The proof will be done by induction on the number of critical points contained in $B^\theta:=G^{-1}\bigl(\theta+\rdInt\bigr)$, and Lemma \ref{lem:G1} provides the initial step of the induction. We now consider the subset $B^\theta$ for $\theta$ varying from $\pi$ to $2\pi$.
\begin{figure}[htb]
\centerline{An example with $p=4$ and $q=5$.}\par\smallskip
\begin{minipage}[t]{.46\textwidth}
\begin{center}\setlength{\unitlength}{.9mm}
\includegraphics[scale=0.4]{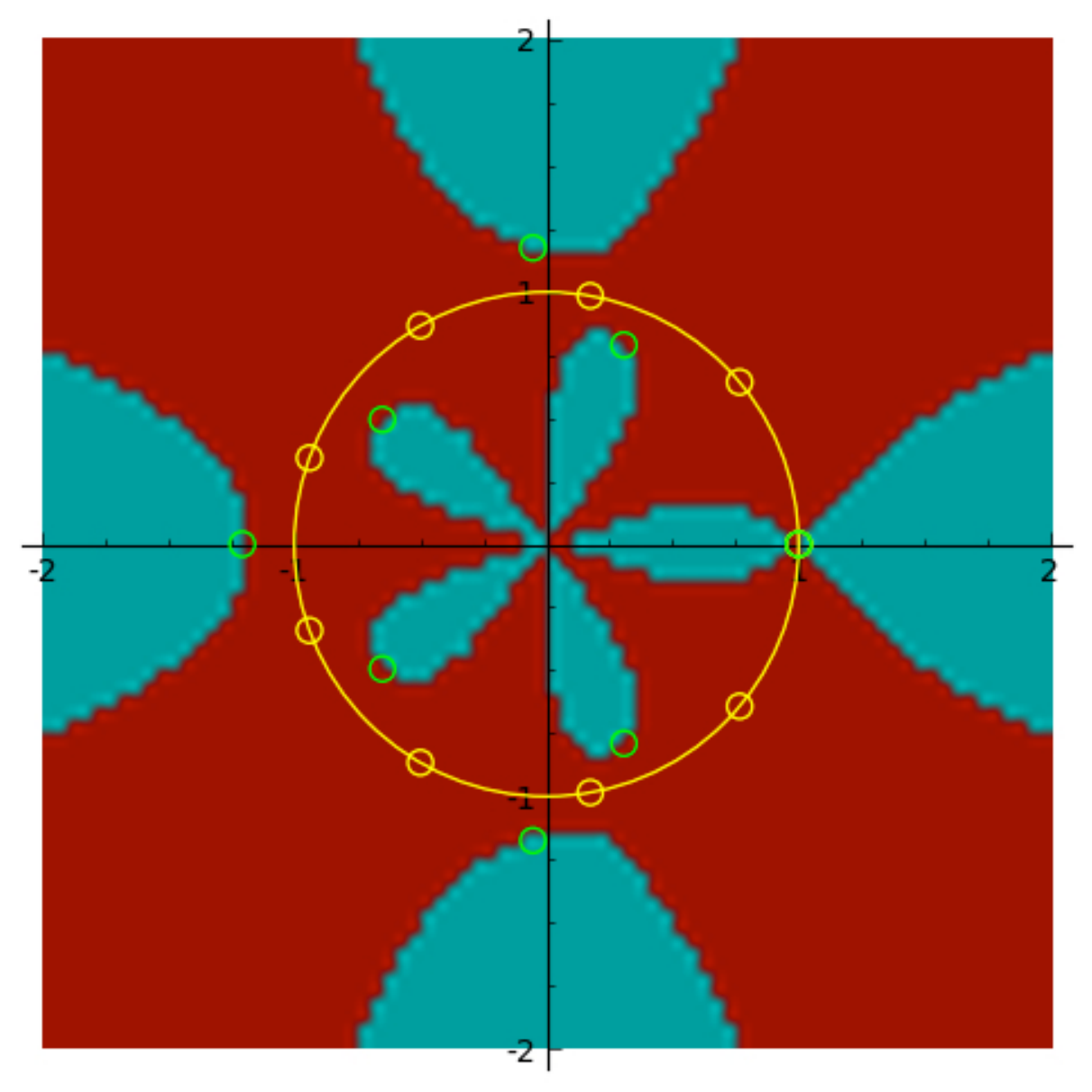}
\caption{The subset $B^\pi$ for the starting value $\pi$ of $\theta$. The green circles are the zeroes of $f(x)$. The small yellow circles are the roots of unity $\mu_{p+q}$, \ie the critical points of the Morse function.}\label{figurerd1}
\end{center}
\end{minipage}
\hfill
\begin{minipage}[t]{.46\textwidth}
\begin{center}\setlength{\unitlength}{.9mm}
\includegraphics[scale=0.4]{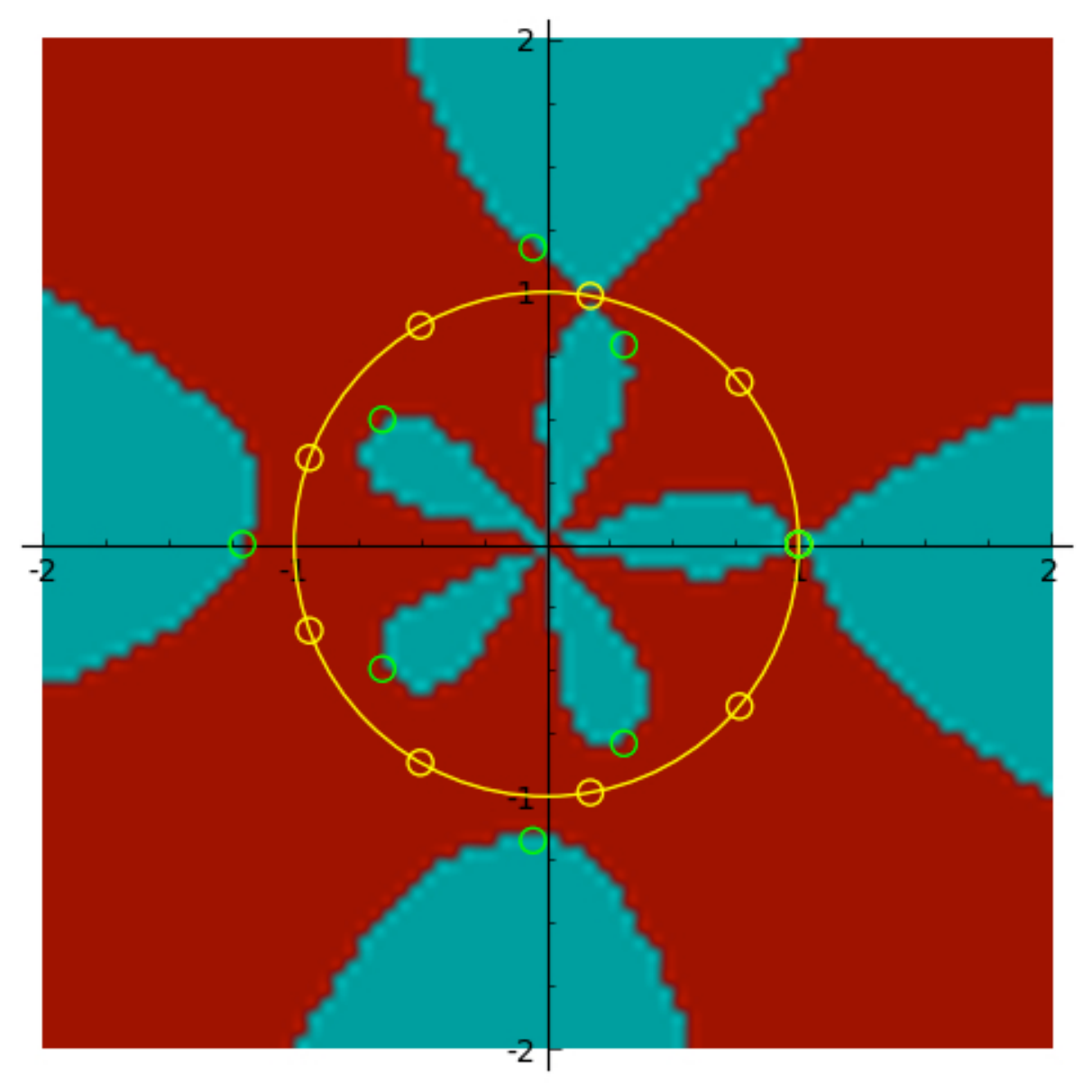}
\caption{The subset $B^\theta$ when the first critical point enters the picture (\hbox{$\theta_1=\pi+\frac{\pi}{p+q}$}). We see the gluing of the corresponding discs in the upper half of the picture.}\label{figurerd2}
\end{center}
\end{minipage}
\end{figure}
The case from $\pi$ to $0$ is very similar. When $\theta$ varies, no critical point can go out from $G^{-1}(\theta+\frac{\pi}{2})$, but a critical point may enter in $G^{-1}(\theta+\frac{3\pi}{2})$. This occurs successively for
\[
\theta=\theta_\kappa=\pi+\frac{\kappa\pi}{p+q},\quad\kappa=1,\dots,p+q-1,
\]
with corresponding critical point given by Lemma \ref{lem:claim1}. Attaching a $1$-cell at such a~$\theta$ consists in connecting a connected component of $B^{\theta-\ve}$ intersecting $\{r<1\}$ with one intersecting $\{r>1\}$.

The behaviour of $B^\theta$ when $\theta$ varies between $\theta_\kappa-\ve$ and $\theta_\kappa+\ve$ ($\ve>\nobreak0$ small) is pictured in Figure \ref{fig:B2thetacrit}, where the dotted lines are the part of the boundary which do not belong to $B^\theta$.
\begin{figure}[htb]
\begin{center}
\includegraphics[scale=.2]{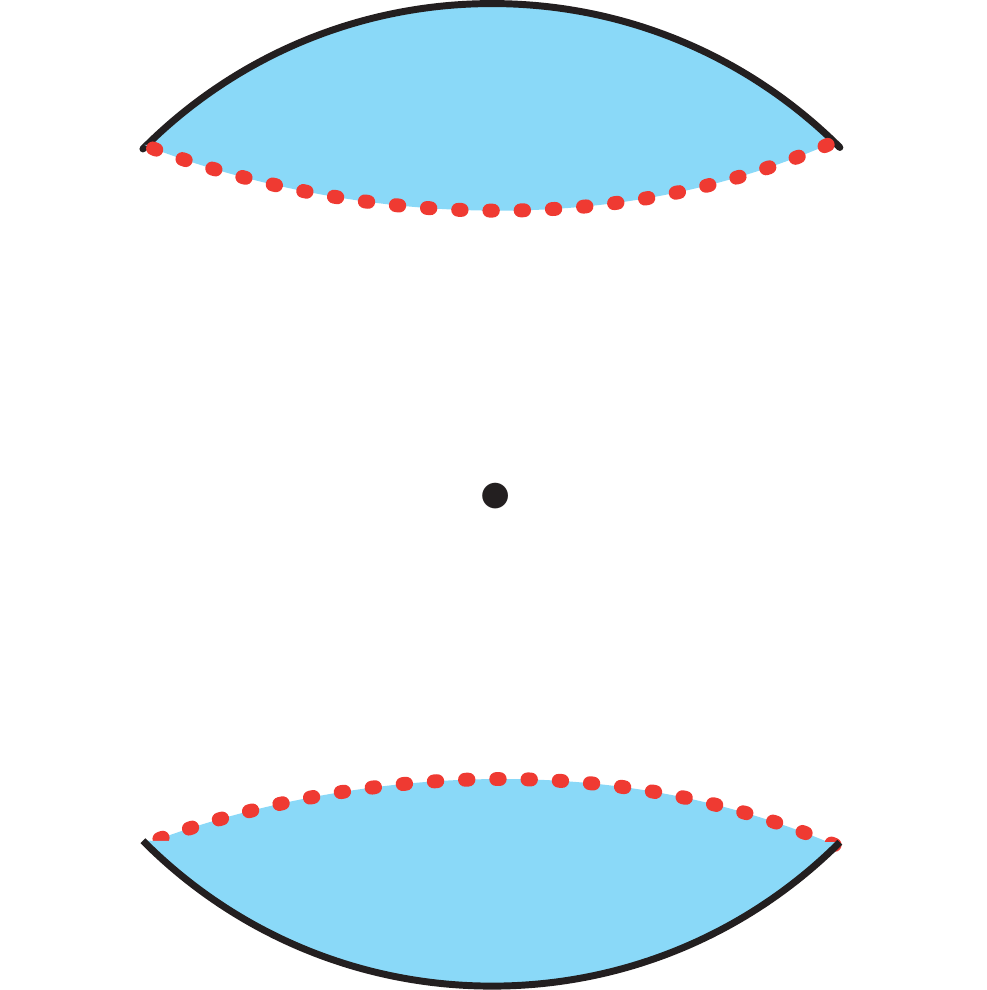}\hspace*{2cm}\includegraphics[scale=.2]{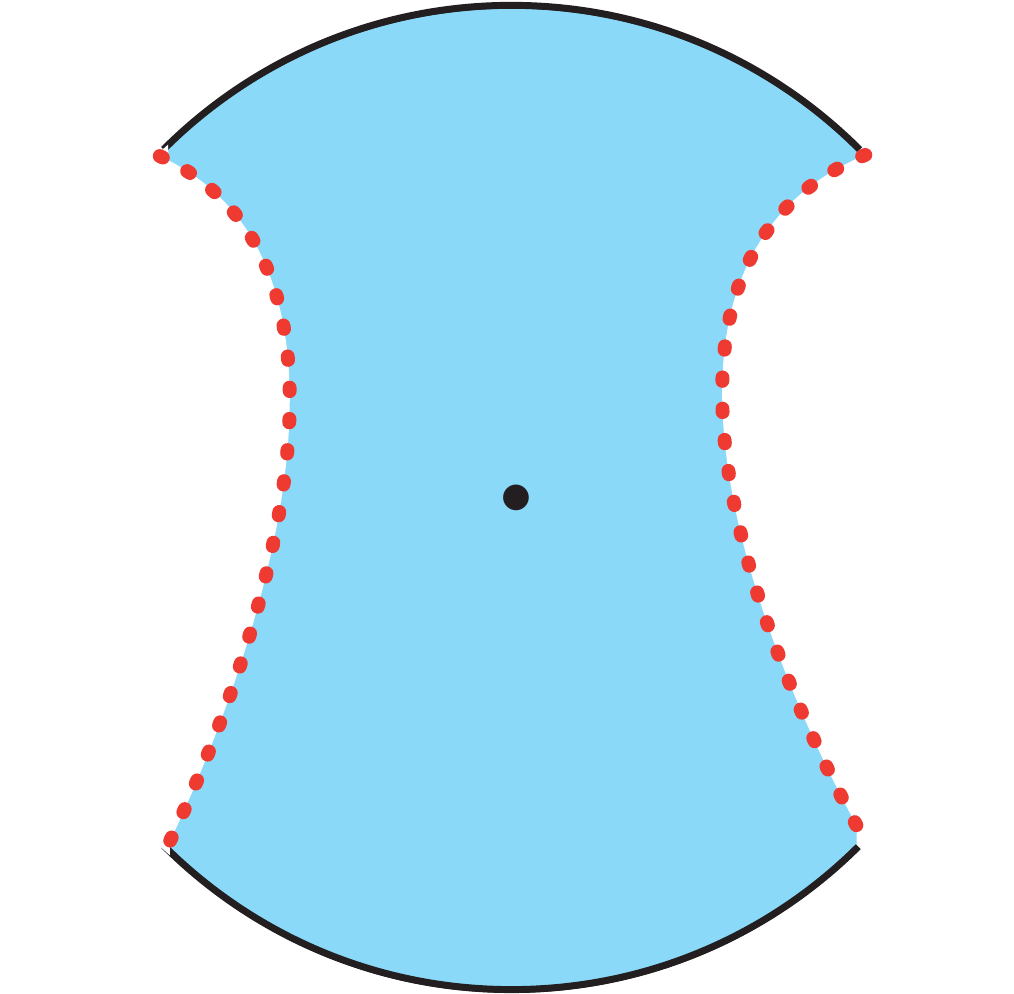}
\caption{Local view at the critical point $\wzeta=\exp(\frac{2k\pi i}{p+q})$ of the set $B^\theta$ at $\theta= \theta_\kappa-\ve$ and $\theta=\theta_\kappa+\ve$, with dotted lines deleted.}\label{fig:B2thetacrit}
\end{center}
\end{figure}
Indeed, for $\theta<\theta_\kappa$, the boundary of $B^\theta$ is contained in $\partial\wA$ by induction, hence far from the critical point and, by Morse theoretic arguments, the properties of~$B^\theta$ remain constant away from a neighbourhood of the corresponding critical point. The assertion of the proposition is thus also satisfied for $\theta\in[\theta_\kappa,\theta_\kappa+\ve)$.
\end{proof}

\begin{corollaire}\label{cor:Geta}
Let $\cK$ be a local system on $\wA$. Then, for each $\theta\!\in\!\SS^1$, $H^j(\wA, \beta^\theta_!\cK)\!=\!\nobreak0$ for $j\neq1$ and
\[
H^1(\wA, \beta^\theta_!\cK)\simeq\bigoplus_{\wzeta\in B^\theta\cap\mu^{p+q}}\cK_{\wzeta}.
\]
\end{corollaire}

\begin{proof}
For the first assertion, we note that the topological boundary of each connected component of $B^\theta$ intersects the component but is not contained in it, according to Proposition \ref{prop:G1}. Therefore, $\beta^\theta_!\cK$ has no global section, nor has its dual sheaf, hence the conclusion by using Poincaré-Verdier duality.

The second assertion is proved by induction as in Proposition \ref{prop:G1} on the number of critical points contained in $B^\theta$. The case $\theta=\pi$ follows from Lemma \ref{lem:G1} and \cite[Lem.\,7.13]{Bibi10}. When $\theta$ is in the neighbourhood of a critical value $\theta_\kappa$, we decompose $B^\theta$ in two closed subsets $B_1^\theta\cup B_2^\theta$, so that $B_2^{\theta_\kappa-\ve}$ \resp $B_2^{\theta_\kappa+\ve}$ are as in Figure \ref{fig:B2thetacrit} and $B_1^{\theta_\kappa-\ve}$ is diffeomorphic to $B_1^{\theta_\kappa+\ve}$ by a Morse-theoretic argument. We then have an exact sequence, according to the first part and with obvious notation,
\[
0\to H^1(\wA, \beta^\theta_!\cK)\to H^1(\wA, \beta^\theta_{1,!}\cK)\oplus H^1(\wA, \beta^\theta_{2,!}\cK)\to H^1(\wA, \beta^\theta_{12,!}\cK)\to0
\]
and $H^1(\wA, \beta^\theta_{1,!}\cK)$ remains constant, as well as $H^1(\wA, \beta^\theta_{12,!}\cK)$, when $\theta$ varies around~$\theta_\kappa$. The only changes come from $H^1(\wA, \beta^\theta_{2,!}\cK)$, which is zero for $\theta\in(\theta_\kappa-\ve,\theta_\kappa]$ and has dimension $\rk\cK$ for $\theta\in(\theta_\kappa,\theta_\kappa+\ve)$.
\end{proof}

Going back to the sets $B_{\omega_{\wzeta}>0}^{\vtn}$, we conclude:

\begin{corollaire}\label{cor:Geta2}
Let $\cK$ be a local system on the annulus $\wA$. Then, for each $\vtn\in\SS^1_{\eta=0}$ and $\wzeta\in\mu_{p+q}$, we have
\[
H^1\bigl(\wA,(\beta^{\vtn}_{\omega_{\wzeta}>0})_!\cK\bigr)\simeq\bigoplus_{\substack{\wzeta'\in\mu_{p+q}\moins\{1\}\\\wzeta'\in B_{\omega_{\wzeta}>0}^{\vtn}}}\cK_{\wzeta'},
\]
where we regard $\mu_{p+q}$ as embedded in $\wA=[0,\infty]\times\SS^1$ as $\{(1,\vt)\mid \vt^{p+q}=1\}$.\qed
\end{corollaire}

\subsection{The family \texorpdfstring{$(B_{\omega_{\wzeta}>0}^{\vtn_\ell})$}{Bvtn} for \texorpdfstring{$\wzeta\in\mu_{p+q}$}{wzeta} and \texorpdfstring{$\ell=0,\dots,2q-1$}{ell}}\label{subsec:Bwzeta}

We now make precise the choice of an \angle $\vtn_o\in\SS^1_{\eta=0}$, and so the set \hbox{$\{\vtn_\ell\!:=\!\vtn_o+\frac{\ell\pi}{q} \mid \ell\!=\!0,\dots,2q-1\}$}. Due to the previous results, the choice of a base point $\vtn'_o\in\SS^1_{\eta=0}$ such that $q \vtn'_o-\nobreak\arg(\vi_q)=\pi$ seems appropriate. However, since $\vtn'_o$ is a Stokes direction for some pair in $(\wvi_{\wzeta})_{\wzeta\in\mu_{p+q}}$, we modify it and set $\vtn_o :=\vtn'_o+\sfrac{\ve}{q}$ for $\ve>0$ small enough in such a way that $\vtn_o $ is not a Stokes direction. We thus have $q \vtn_o-\arg(\vi_q)=\pi+\ve$.

We now fix $\ell\in\{0,\dots,2q-1\}$. The order on the family $(\wvi_{\wzeta})_{\wzeta}$ at $\vtn_\ell$ is defined by $\wvi_{\wzeta}\le_{\vtn_\ell}\wvi_{\wzeta'}\iff \reel(\wvi_{\wzeta}-\wvi_{\wzeta'})<0$ in some neighbourhood of $\vtn_\ell$, which amounts to $\reel\bigl(e^{-\ve i}(\wzeta^p-\wzeta^{\prime p})\bigr)<0$ for $\ell$ odd, and the reverse order for $\ell$ even, according to Formula \eqref{eq:wvieta} and to the relation $q\vtn_\ell-\arg(\vi_q)=(\ell+1)\pi+\ve$. Therefore it corresponds to the even (\resp odd) order of Definition \ref{def:standardorder} on $\mu_{p+q}$ when $\ell$ is even (\resp odd). Recall that, with respect to this order, the family indexed by $\wzeta$:
\[
B_{\omega_{\wzeta}>0}^{\vtn_\ell}=\Big\{x\in\wA\mid G(\wzeta^{-1}x)\in q\arg\wzeta+q\vtn_\ell-\arg(\vi_q)+ \rdInt\Big\}
\]
is increasing. One can also check directly with the formula given in Lemma \ref{lem:claim1} that the order on $\mu_{p+q}$ at $\vtn_\ell$ coincides with the order by the number of critical points contained in $B_{\omega_{\wzeta}>0}^{\vtn_\ell}$. We thus have
\begin{equation}\label{eq:Bfiltre}
B_{\omega_{\wzeta}>0}^{\vtn_\ell}=\begin{cases}
\evB_{\omega_{\wzeta}>0}:=\wzeta\cdot G^{-1}\Bigl(q\arg\wzeta +\ve +\Bigl({-}\dfrac\pi2,\dfrac\pi2\Bigr)\Bigr)&\text{if $\ell$ is even},\\[5pt]
\oddB_{\omega_{\wzeta}>0}:=\wzeta\cdot G^{-1}\Bigl(q\arg\wzeta +\ve +\Bigl(\dfrac\pi2,\dfrac{3\pi}2\Bigr)\Bigr)&\text{if $\ell$ is odd}.
\end{cases}
\end{equation}

We will set
\begin{equation}\label{eq:evIeps}
\begin{aligned}
\evIin{m}&:=\evDin_m-\sfrac\ve q,&\evIout{n}&:=\evDout_n+\sfrac\ve p,\\[5pt]
\oddIin{m}&:=\oddDin_m-\sfrac\ve q,&\oddIout{n}&:=\oddDout_n+\sfrac\ve p.
\end{aligned}
\end{equation}
Then we have
\begin{equation}\label{eq:dB}
\bin B^{\vtn_\ell}_{\omega_{\wzeta}>0}=\left\{\begin{array}{lcl}
\bigcup_{m=0}^{q-1}\evIin{m}&\quad\text{($\ell$ even)}\quad&\bigcup_{n=0}^{p-1}\evIout{n}\\[7pt]
\bigcup_{m=0}^{q-1}\oddIin{m}&\quad\text{($\ell$ odd)}\quad&\bigcup_{n=0}^{p-1}\oddIout{n}
\end{array}\right\}
=\bout B^{\vtn_\ell}_{\omega_{\wzeta}>0}.
\end{equation}

In the following, $\ve>0$ will be chosen strictly smaller of any constant quantity like $\pi/(p+q)$, etc.

\section{Topological model for the standard linear Stokes data}\label{sec:standard}

In this section we introduce a ``linear model'' and we develop a computation of the Stokes data in this model. That this model suffices for the computation of the desired Stokes data will be shown in Section \ref{finalsection}.

\subsection{Standard filtration on a local system on the annulus}\label{sec:standtop}

We are given two coprime integers $p,q\in\N $. Recall that we defined two opposite orderings on the set of $(p+q)^{\text{th}}$ roots of unity in Definition \ref{def:standardorder}. We define the following two subsets related to those constructed in \S\ref{subsec:GGpsi} (notation as in \eqref{eq:evDeltain} and \eqref{eq:evDeltaout}):
\begin{equation}\label{eq:evoddbeta}
\begin{aligned}
\Aev&:=A \cup\Bigl(\bigsqcup_{m=0}^{q-1} \evDin_m\sqcup\bigsqcup_{n=0}^{p-1} \evDout_n\Bigr) \Hto{\evbeta} \wA,\\
\Aodd&:=A \cup\Bigl(\bigsqcup_{m=0}^{q-1} \oddDin_m \sqcup\bigsqcup_{n=0}^{p-1} \oddDout_n\Bigr) \Hto{\oddbeta} \wA.
\end{aligned}
\end{equation}
Both spaces are obtained from the closed annulus $\wA$ by deleting $p$ closed intervals in the outer boundary and $q$ closed intervals in the inner boundary. On the other hand, we embed $\mu_{p+q}$ into the circle with $r=1$.

\subsubsection{The even case}

The combinatorics which will play a role in the following is governed by the following map (\cf\eqref{eq:evinout} for the notation):
\begin{equation}
\label{eq:cev}
\begin{split}
\{0,\dots, p+q-1\}&\to\{0,\dots,q\}\times\{0,\dots,p\}\\
k&\mto\bigl(\evin(k),\evout(k)\bigr).
\end{split}
\end{equation}
We will also consider the composition $\cev$ with the projection to $\ZZ/q\ZZ\times\ZZ/p\ZZ$, and we will denote by $m$ (\resp $n$) the representative of $\evin(k)\bmod q$ in $\{0,\dots,q-1\}$ (\resp of $\evout(k)\bmod p$ in $\{0,\dots,p-1\}$), so we also regard $\cev:k\mto(m,n)$ as a map $\{0,\dots, p+q-1\}\to\{0,\dots,q-1\}\times\{0,\dots,p-1\}$.

\begin{remarques}\mbox{}\label{rem:cev}
\begin{enumerate}
\item\label{rem:cev1}
The map \eqref{eq:cev} clearly injective. Moreover, if $p$ and $q$ are $\neq1$, $\cev$ is also injective. Indeed, let $k_1,k_2$ be such that $(m_1,n_1)=(m_2,n_2)$.
\begin{itemize}
\item
If $m_1=m_2\neq0$ and $n_1=n_2\neq0$, then $\evin(k_1)=\evin(k_2)$, $\evout(k_1)=\evout(k_2)$, and $\evout(k_1)=k_1-\evin(k_1)$, $\evout(k_2)=k_2-\evin(k_2)$, so $k_1=k_2$.
\item
If $m_1=m_2=0$ and $n_1=n_2\neq0$, then $\evout(k_1)=\evout(k_2)=:n\in\{1,\dots,p-1\}$ and $\evin(k_1)=k_1-n$, $\evin(k_2)=k_2-n$. It is thus enough to consider the case $\evin(k_1)=0$ and $\evin(k_2)=q$, which is then equivalent to $k_1=n$, $k_2=n+q$, $\frac{n}{p+q}<\frac{1}{2q}$ and $\frac{n+q}{p+q}\geq1-\frac{1}{2q}$. This implies $p+q>pq$, which can occur only if $q=1$ (since $p\neq1$ with our starting assumption).
\item
The case $m_1=m_2\neq0$ and $n_1=n_2=0$ is treated similarly.
\item
Assume $m_1=m_2=0$ and $n_1=n_2=0$.
\begin{itemize}
\item
If $\evin(k_1)=\evin(k_2)=0$, the only case to consider is $\evout(k_1)=0$ and $\evout(k_2)=p$, which implies $p+q>2pq$ and cannot occur.
\item
If $\evin(k_1)=\evin(k_2)=q$, we have $k_1=\evout(k_1)+q$ and $k_2=\evout(k_2)+q$ with $n_i=0$ or $p$, but $n_i=p$ is not possible since $k_i<p+q$, so $\evout(k_1)=\evout(k_2)=0$, and therefore $k_1=k_2$.
\item
If $\evin(k_1)=0$ and $\evin(k_2)=q$, then $k_1=\evout(k_1)$, $k_2=\evout(k_2)+q$, so the only problematic cases are $k_1=0,p$, $\evout(k_2)=0$ and $k_2=q$. But $\evout(k_2)=0$ and $k_2=q$ imply $qp/(p+q)<1/2$, which cannot occur.
\end{itemize}
\item
If $q=1$ and $p$ is even (\resp odd) the pairs $k_1=p/2,k_2=(p+2)/2$ (\resp $k_1=(p-1)/2,k_2=(p+1)/2$) give rise to the same pair $(m,n)=(0,p/2)$ (\resp $(m,n)=(0,(p-1)/2)$). The case $p=1$ is similar.
\end{itemize}

\item\label{rem:cev2}
The map $\cev$ arises in the following way: $m$ is the unique element in $\{0,\dots,q-1\}$ such that the rotation of $\evDin_0$ by $\xi^k$ has non-empty intersection with $\evIin{m}$ as defined by \eqref{eq:evIeps}:
\[
\Bigl(\frac{2k\pi}{p+q}+\evDin_0\Bigr)\cap\evIin{m}\neq\emptyset\quad\forall\ve>0\text{ small enough}.
\]
Equivalently, this condition decomposes as
\[
\Bigl(\frac{2k\pi}{p+q}+\evDin_0\Bigr)\cap\evDin_m
\begin{cases}
\neq\emptyset,\text{ or}\\
=\emptyset\text{ and }\frac{2k\pi}{p+q}+\frac{\pi}{2q}=\frac{2m\pi}q-\frac{\pi}{2q}\bmod2\pi,
\end{cases}
\]
and the latter case can occur only if $p+q$ is even. Similarly, $n$ is the unique element of $\{0,\dots,p-1\}$ such that
\[
\Bigl(\frac{2k\pi}{p+q}+\evDout_0\Bigr)\cap\evIout{n}\neq\emptyset\quad\forall\ve>0\text{ small enough}.
\]
\item\label{rem:cev3}
It will be suggestive to write $k\mto(m, n)$ as
\[
\evconn{m}{k}{n}.
\]
We will construct a family of curves $\evgamma_k$ in Proposition \ref{prop:standtopmodel} below such that $\evgamma_k$ connects $\evDin_m $ and $\evDout_n $ passing through $\xi^k$. If $q\geq2$ we can characterize the index $\evminin(k)$ in \eqref{eq:minin} as the index $k'$ such that $\evgamma_{k'}$ starts from $\evDin_m$ but not $\evgamma_{k'-1}$ (where $k'$ is understood $\bmod (p+q)$) -- and similarly for $\evmaxout(k)$.
\item
\emph{In the remaining part of this section, we will assume that $p>q$}. Otherwise, the same arguments hold by interchanging the roles of the inner and outer circle of the annulus $\wA=[0,\infty]\times\SS^1$ with coordinates $(r,\vt)$ in the following constructions.
\end{enumerate}
\end{remarques}

The following lemma is straightforward and is illustrated by Figure \ref{fig:leray}.

\begin{lemme}\label{lem:3xis}
Assuming that $\evconn{m}{k}{n}$
one of the following holds:
$$
\begin{array}{rlcl}
\text{i)} & \evconn{m-1}{(k-1)}{n} & \text{and} & \evconn{m}{(k+1)}{n+1}\\[.1cm]
\text{ii)} & \evconn{m}{(k-1)}{n-1} & \text{and} & \evconn{m}{(k+1)}{n+1}\\[.1cm]
\text{iii)} & \evconn{m}{(k-1)}{n-1} & \text{and} & \evconn{m+1}{(k+1)}{n}.\\
\end{array}
$$
Moreover,
$$
\begin{array}{rcl}
\text{i) holds} & \Longleftrightarrow & \frac{k}{p+q}\in\big[\frac mq-\frac1{2q},\frac mq-\frac1{2q}+\frac1{p+q}\bigr), \\[.2cm]
\text{ii) holds} & \Longleftrightarrow & \frac{k}{p+q}\in\big[\frac mq-\frac1{2q}+\frac1{p+q},\frac mq+\frac1{2q}-\frac1{p+q}\bigr),\\[.2cm]
\text{iii) holds} & \Longleftrightarrow & \frac{k}{p+q}\in\big[\frac mq+\frac1{2q}-\frac1{p+q},\frac mq+\frac1{2q}\bigr).\\[.2cm]
\end{array}
$$
In particular, $\#(\cev^\mathrm{out})^{-1}(n)\leq2$ for each $n\in\{0,\dots,p-1\}$.\qed
\end{lemme}

\begin{figure}[htb]
\begin{center}
\begin{picture}(150,60)
\put(10,0){\includegraphics[scale=0.4]{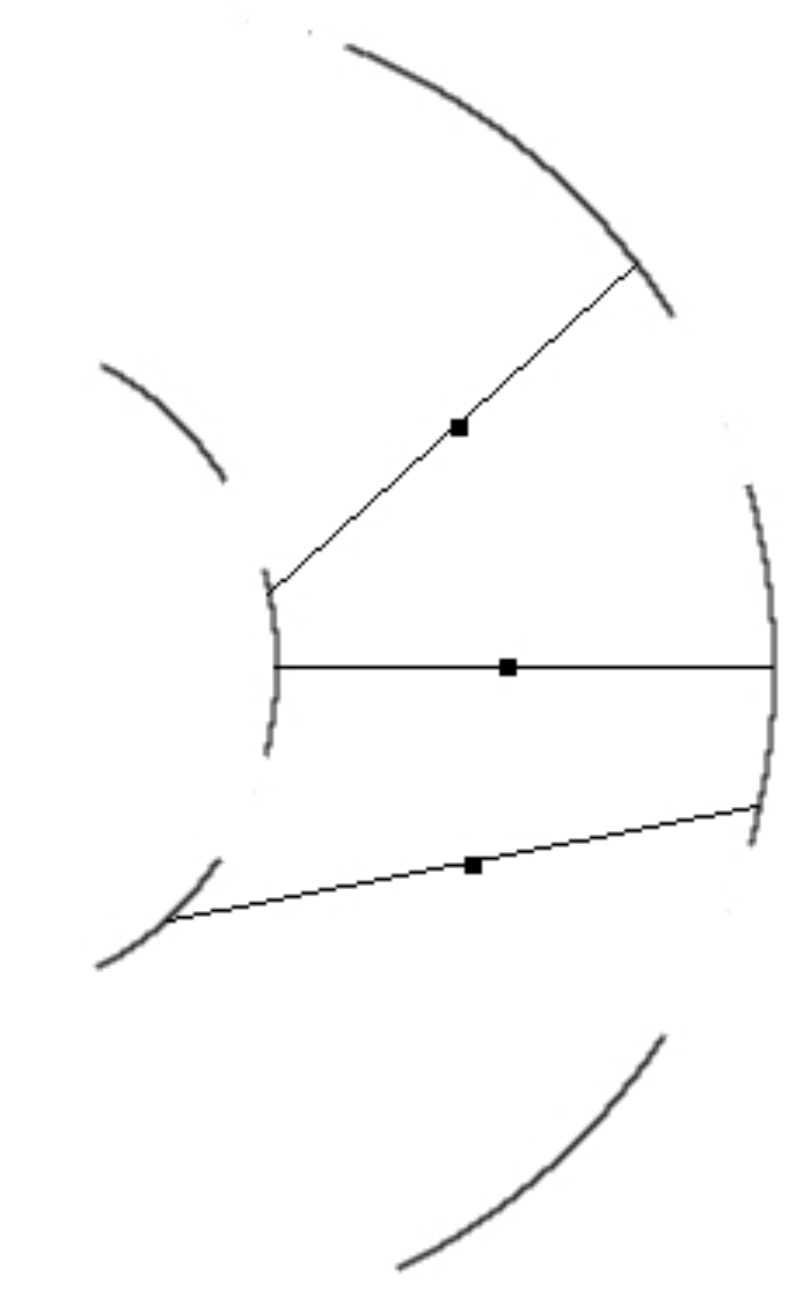}}
\put(50,0){\includegraphics[scale=0.4]{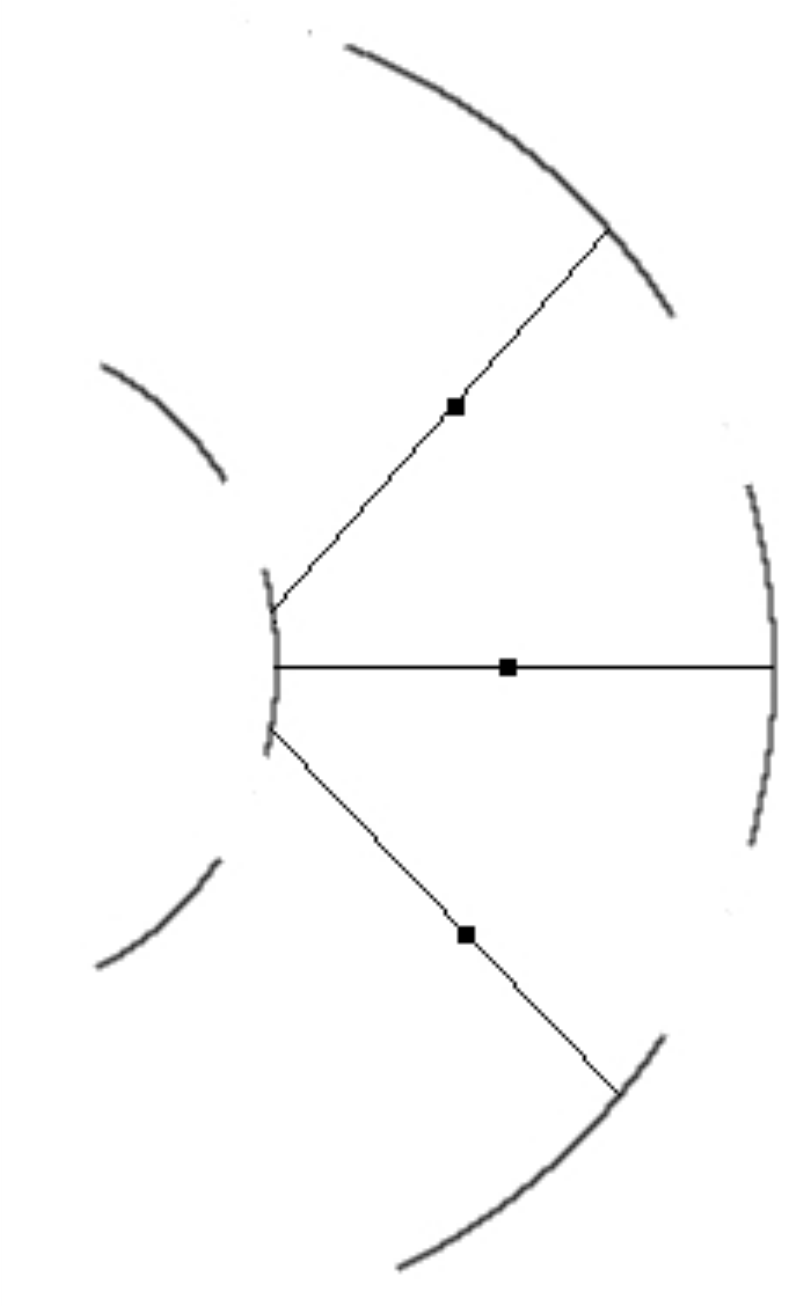}}
\put(90,0){\includegraphics[scale=0.4]{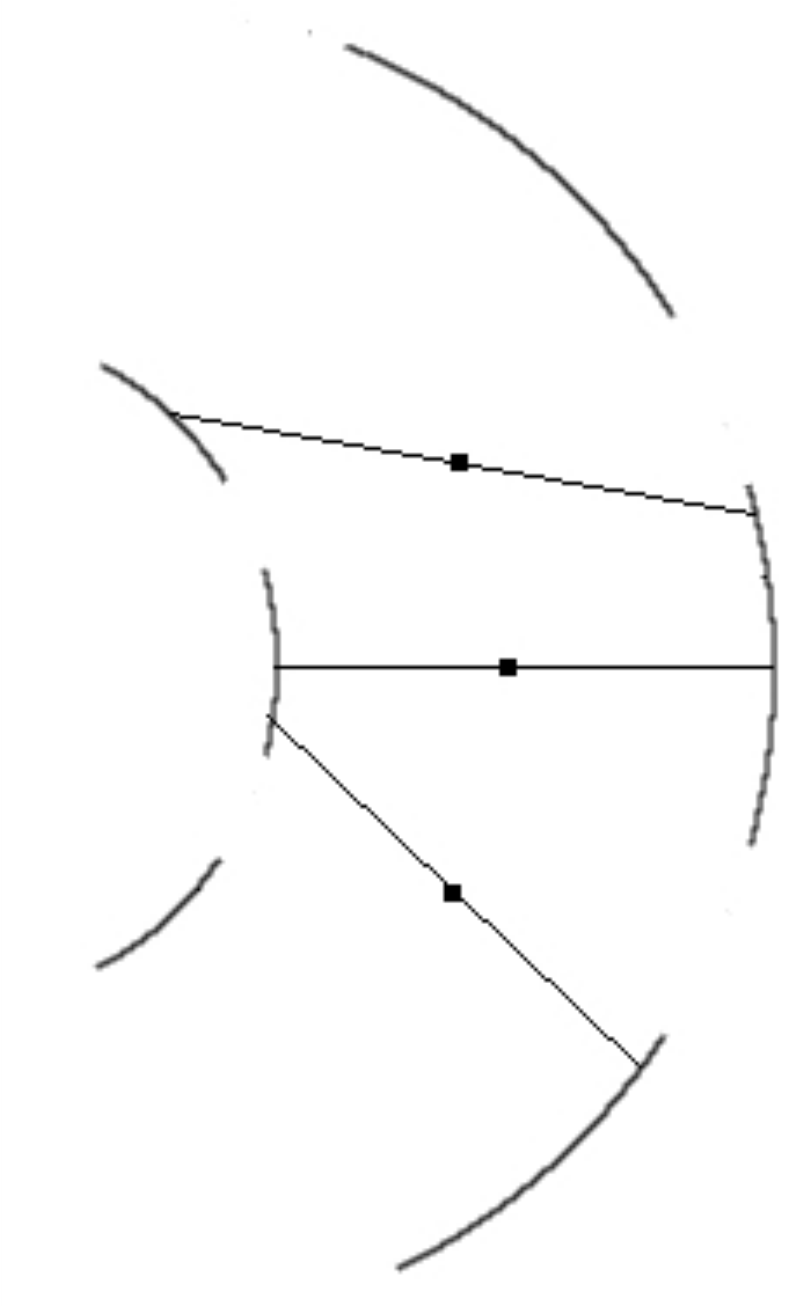}}

\put(15,25){\footnotesize $\evDin_m$}
\put(9,33){\footnotesize $\evDin_{m+1}$}
\put(6,16){\footnotesize $\evDin_{m-1}$}
\put(30,28){\footnotesize $\xi^k$}
\put(28,38){\footnotesize $\xi^{k+1}$}
\put(28,15){\footnotesize $\xi^{k-1}$}
\put(43,25){\footnotesize $\evDout_{n}$}
\put(31,49){\footnotesize $\evDout_{n+1}$}
\put(35,4){\footnotesize $\evDout_{n-1}$}
\end{picture}
\end{center}
\caption{The three possible situations for $\cev(k-1),\cev(k),\cev(k+1)$.}\label{fig:leray}
\end{figure}

In the following we use the convention that $(p+q-1)+1=0$.

\begin{proposition}\label{prop:standtopmodel}
There exists a closed covering $(\evfS_k)_{k=0,\dots,p+q-1}$ of $\wA$ satisfying the following properties:
\begin{enumerate}
\item
Each $\evfS_k$ is the homeomorphic image of the unit square with edges denoted successively by $a,b,c,d$.
\item
The image of $a$ (\resp $c$) is a closed interval in the inner (\resp outer) boundary of $\wA$.
\item\label{prop:standtopmodel3}
For each $k$, there is exactly one of the intervals $\oddDin$ and $\oddDout$ which intersects $\partial(\evfS_k)$, either $\oddDin_m$ or $\oddDout_n$ with $(m,n):=\cev(k)$, and it is entirely contained in $\partial(\evfS_k)$.
\item
The image of $b$ (\resp $d$) is a smooth curve $\evgamma_k$ (\resp $\evgamma_{k+1}$) which satisfies
\begin{enumerate}
\item
$\evgamma_k\cap\partial\wA\subset\partial(\evgamma_k)$,
\item
$\evgamma_k\cap\mu_{p+q}=\{\xi^k\}$,
\item\label{prop:standtopmodel4c}
$\evgamma_k$ connects $\evDin_m$ with $\evDout_{n}$, where $(m,n):=\cev(k)$.
\end{enumerate}
\item
We have $\evfS_k\cap\evfS_{k'}=\begin{cases}\emptyset&\text{if }k'\neq k+1,k-1,\\ \evgamma_k&\text{if }k'=k-1,\\ \evgamma_{k+1}&\text{if }k'=k+1.\end{cases}$
\end{enumerate}
\end{proposition}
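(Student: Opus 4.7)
The plan is to construct the family of curves $\evgamma_k$ first, and then define $\evfS_k$ as the closed region of $\wA$ cut off by the pair $\evgamma_k,\evgamma_{k+1}$. For each $k\in\{0,\dots,p+q-1\}$, set $(m,n):=\cev(k)$, pick a point $p_k$ in the interior of $\evDin_m$ and $q_k$ in the interior of $\evDout_n$, and let $\evgamma_k$ be a smooth simple arc from $p_k$ through $\xi^k$ to $q_k$ meeting the middle circle $\{r=1\}$ only at $\xi^k$. Within any common $\evDin_m$ the points $p_k$ are placed in the cyclic order induced by increasing $k$, and similarly for the $q_k$ within each $\evDout_n$.

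The heart of the argument, and what I expect to be the main obstacle, is verifying that the $\evgamma_k$ can be drawn pairwise disjoint. Lemma \ref{lem:3xis} is the key input: it describes how the triple $\cev(k-1),\cev(k),\cev(k+1)$ behaves, and in particular that exactly one of $\evin$ or $\evout$ strictly increases when $k\to k+1$. Combined with the injectivity of $\cev$ (Remark \ref{rem:cev}\eqref{rem:cev1}), this forces the three sequences, namely the $p_k$'s on the inner boundary, the $\xi^k$'s on $\{r=1\}$, and the $q_k$'s on the outer boundary, to inherit compatible cyclic orderings indexed by $k$. A standard planar-topology argument (three compatibly cyclically ordered families of points on the three nested circles of an annulus can be joined by pairwise disjoint monotone arcs through the middle circle) then produces the desired disjoint realization of the $\evgamma_k$.

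Once disjointness is secured, $\wA\smallsetminus\bigcup_k \evgamma_k$ has exactly $p+q$ open connected components, and I would define $\evfS_k$ to be the closure of the component whose boundary contains both $\evgamma_k$ and $\evgamma_{k+1}$. By the Jordan-Schoenflies theorem applied in $\wA$, each $\evfS_k$ is homeomorphic to a closed square whose four edges are as described in the statement. Properties (1), (2), (4), (5) then follow directly from the construction. For property \eqref{prop:standtopmodel3}, the inner edge $a$ joins the endpoint of $\evgamma_k$ on $\evDin_{\evin(k)}$ to that of $\evgamma_{k+1}$ on $\evDin_{\evin(k+1)}$ along the short arc. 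If $\evin(k+1)=\evin(k)$, then $a\subset\evDin_{\evin(k)}$ meets no odd inner interval; if $\evin(k+1)=\evin(k)+1$, then $a$ spans exactly the gap between $\evDin_{\evin(k)}$ and $\evDin_{\evin(k)+1}$, which coincides (using the definitions in \eqref{eq:evDeltain}) with the closure of $\oddDin_{\evin(k)}$, so $\oddDin_{\evin(k)}\subset a\subset \partial\evfS_k$. The symmetric statement holds for the outer edge $c$ and $\oddDout_{\evout(k)}$. Since by Lemma \ref{lem:3xis} exactly one of the transitions $\evin(k+1)=\evin(k)+1$ or $\evout(k+1)=\evout(k)+1$ occurs, exactly one of $\oddDin_m,\oddDout_n$ lies in $\partial\evfS_k$, establishing \eqref{prop:standtopmodel3} and completing the proof.
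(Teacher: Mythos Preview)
Your approach is essentially the same as the paper's: construct the curves $\evgamma_k$ first, then take $\evfS_k$ to be the region they bound. The paper realizes this concretely by starting from the radial segments $\evgamma'_k=[0,\infty]\times\{2k\pi/(p+q)\}$ (which are trivially disjoint and cut $\wA$ into squares) and then sliding only the endpoints of each $\evgamma'_k$ along $\partial\wA$, keeping $\xi^k$ fixed, until they land in the correct intervals $\evDin_m$, $\evDout_n$. Disjointness is preserved throughout the isotopy, so no separate planar-topology argument is needed.

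There is, however, a genuine gap in your treatment of the case $q=1$. You invoke the injectivity of $\cev$ from Remark~\ref{rem:cev}\eqref{rem:cev1}, but that remark explicitly says injectivity \emph{fails} when $q=1$ (or $p=1$): two distinct $k$'s then share the same $(m,n)$. More seriously, your verification of Property~\eqref{prop:standtopmodel3} breaks down here. When $q=1$ all inner endpoints $p_k$ lie in the single interval $\evDin_0$, and the phrase ``placed in the cyclic order induced by increasing $k$'' does not determine which of the $p+q$ inner edges is the ``long'' one containing $\oddDin_0$. A naive placement (e.g.\ $p_0<p_1<\cdots<p_{p+q-1}$ clustered near $0$) puts $\oddDin_0$ in the boundary of $\evfS_{p+q-1}$, which already contains an $\oddDout$ interval---so that piece gets two odd intervals while another $\evfS_k$ gets none. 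Your dichotomy ``exactly one of $\evin$ or $\evout$ increases'' is correct for $k\to k+1$ with $k<p+q-1$, but the wrap-around $k=p+q-1\to 0$ is not of this form, and matching Property~\eqref{prop:standtopmodel3} at this step requires a specific routing choice. The paper acknowledges this: it treats $q=1$ separately (see Figure~\ref{fig:casq=1}) and, when $p$ is odd, fixes a convention for how $\evgamma_{(p+1)/2}$ is routed around the inner boundary. You need an analogous explicit choice.
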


\begin{proof}
We will represent $\wA$ as obtained from $[0,\infty]\times[0,2\pi]$ by identifying $[0,\infty]\times\{0\}$ with $[0,\infty]\times\{2\pi\}$ through the identity map. We start by considering the closed covering $(\evfS'_k)$ cut out by the line segments $\evgamma'_k=[0,\infty]\times\{\frac{2k\pi}{p+q}\}$.
\begin{figure}[htb]
\begin{center}\setlength{\unitlength}{.8mm}
\begin{picture}(100,35)
\put(0,2){\includegraphics[width=8cm,height=2.5cm]{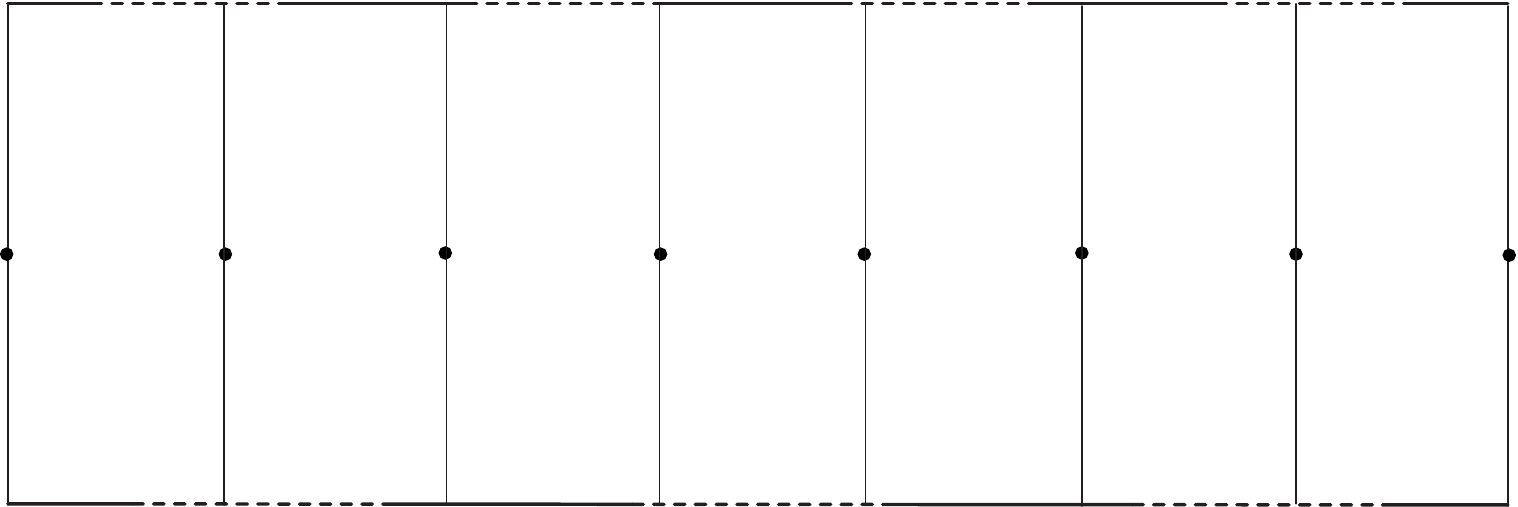}}
\put(2,17){$\xi^0$}
\put(95,17){$\xi^0$}
\put(59,17){$\xi^k$}
\put(34,17){$\xi^{k+1}$}
\put(46,25){$\evfS'_k$}
\put(58,7){$\evgamma'_k$}
\put(31,7){$\evgamma'_{k+1}$}
\put(62,-2){$\evDin_m$}
\put(45,35){$\evDout_n$}
\put(44,-2){$\oddDin_m$}
\put(29,35){$\oddDout_n$}
\end{picture}
\caption{The closed covering $(\evfS'_k)$ (example with $p=4$, $q=3$).}
\end{center}
\end{figure}
However, Properties~\eqref{prop:standtopmodel3} and \eqref{prop:standtopmodel4c} are not satisfied in general by some line segments $\evgamma'_k$ for $k\neq0$.
\begin{figure}[htb]
\begin{center}\setlength{\unitlength}{.8mm}
\begin{picture}(100,35)
\put(0,2){\includegraphics[width=8cm,height=2.5cm]{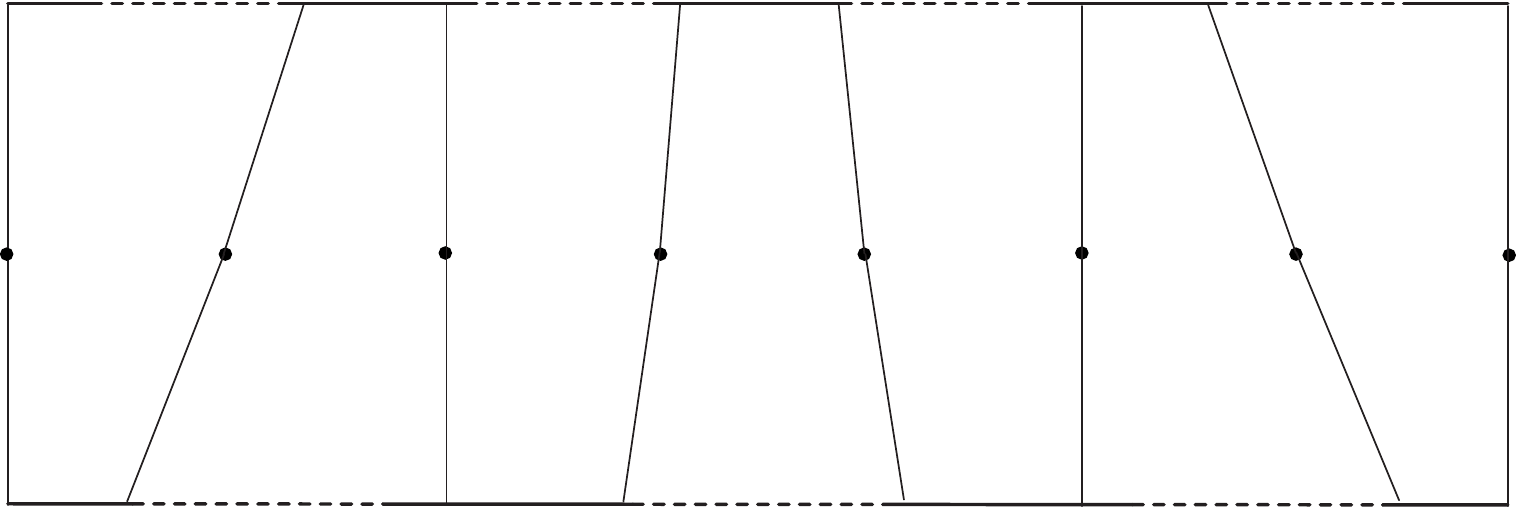}}
\put(2,17){$\xi^0$}
\put(95,17){$\xi^0$}
\put(59,17){$\xi^k$}
\put(34,17){$\xi^{k+1}$}
\put(46,25){$\evfS_k$}
\put(59,7){$\evgamma_k$}
\put(43,7){$\evgamma_{k+1}$}
\put(62,-2){$\evDin_m$}
\put(45,35){$\evDout_n$}
\put(44,-2){$\oddDin_m$}
\put(29,35){$\oddDout_n$}
\end{picture}
\caption{The closed covering $(\evfS_k)$ (example with $p=4$, $q=3$).}
\end{center}
\end{figure}
We then move the ends of the line segment $\evgamma'_k$, keeping $\xi^k$ fixed, so that the new curve~$\evgamma_k$ fulfills \eqref{prop:standtopmodel4c}. Then, if $q>1$, \eqref{prop:standtopmodel3} is fulfilled, according to Lemma \ref{lem:3xis}.

The case $q=1$, which needs a special treatment since $\cev$ is not injective in this case, is checked on Figure \ref{fig:casq=1}.
\begin{figure}[htb]
\begin{center}
\includegraphics[width=8cm,height=2cm]{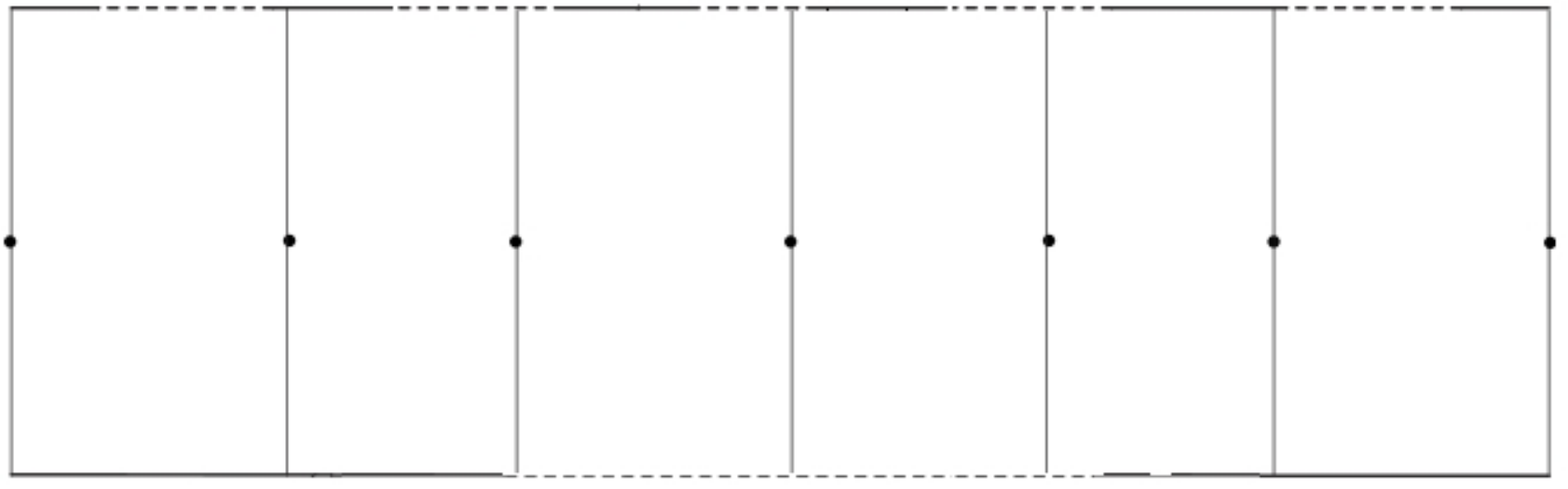}
\par\vspace*{3mm}
\includegraphics[width=8cm,height=2cm]{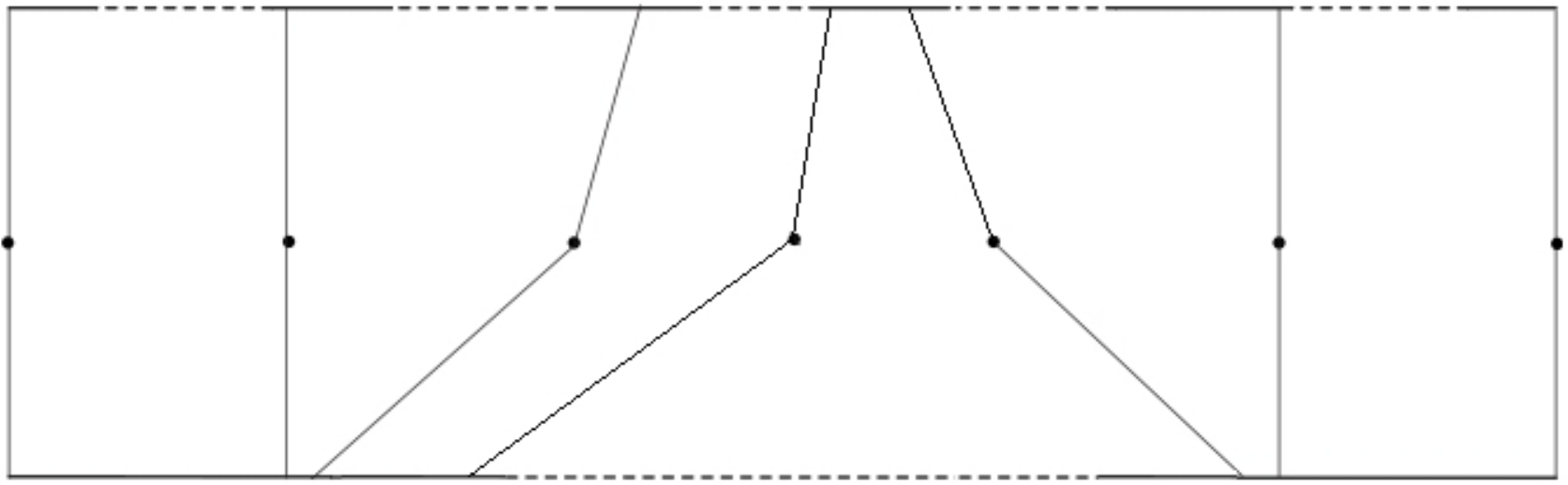}
\put(-40,6){\footnotesize $\xi^{\frac{p+q}{2}}$}
\caption{The closed coverings $(\evfS'_k)$, $(\evfS_k)$ ($p=5$, $q=1$).}\label{fig:casq=1}
\end{center}
\end{figure}
In the case $q=1 $ and $p \equiv 1 \text{ mod } 2 $, we use the convention that the curve $\evgamma_{\frac{p+1}{2}}$ is chosen as in Figure \ref{fig:casq=1}.
\end{proof}

\begin{corollaire}\label{coro:even}
Let $\cK$ be a local system on $\wA$. Then the covering $(\evfS_k)_{k=0,\dots,p+q-1}$ is a Leray covering for $\evbeta_!\cK$ ($\evbeta$ as in \eqref{eq:evoddbeta}) and the identification $H^0(\evgamma_k,\cK) \!\cong\nobreak\!\nobreak\cK_{\xi^k}$ induces an isomorphism
\begin{starequation}\label{eq:Psiev}
\Psiev: H^1(\wA, \evbeta_!\cK) \isom\bigoplus_{\wzeta\in\mu_{p+q}}\hspace*{-2mm}\cK_{\wzeta}.
\end{starequation}%
\end{corollaire}

\begin{proof}
The Leray property follows from Proposition \ref{prop:standtopmodel}\eqref{prop:standtopmodel3} together with \cite[Lem.\,7.13]{Bibi10}.
\end{proof}

Let us fix the even ordering $\mu_{p+q}=\{\wzeta_0 \lev \wzeta_1\lev\cdots \lev \wzeta_{p+q-1}\}$ on $\mu_{p+q}$ (\cf Definition \ref{def:standardorder}).

\begin{definition}\label{def:standfilev}
The even standard Stokes (increasing) filtration on $\bigoplus_{\wzeta\in\mu_{p+q}}\hspace*{-2mm}\cK_{\wzeta}$ is defined by the even ordering $F_k (\bigoplus_{\wzeta\in\mu_{p+q}}\hspace*{-2mm}\cK_{\wzeta}) :=\bigoplus_{j\leq k}\cK_{\wzeta_j}$.
\end{definition}

\subsubsection{The odd case} \label{sec:stdodd}

Similarly to \eqref{eq:cev}, consider the map (\cf\eqref{eq:oddinout} for the notation):
\begin{equation}\label{eq:codd}
\begin{split}
\{0,\dots, p+q-1\}&\to\{0,\dots,q-1\}\times\{-1,\dots,p-1\}\\
k&\mto\bigl(\oddin(k),\oddout(k)\bigr)
\end{split}
\end{equation}
and we also consider the composition $\codd$ with the projection to $\ZZ/q\ZZ\times\ZZ/p\ZZ$, and we still denote by $m$ (\resp $n$) the representative of $\oddin(k)\bmod q$ in $\{0,\dots,q-1\}$ (\resp of $\oddout(k)\bmod p$ in $\{0,\dots,p-1\}$), so we regard $\codd:k\mto(m,n)$ as a map $\{0,\dots, p+q-1\}\to\{0,\dots,q-1\}\times\{0,\dots,p-1\}$. We thus have $m=\oddin(k)$.

\begin{remarque}
As in Remark \ref{rem:cev}, $(m,n)$ is characterized by the properties
\[
\begin{cases}
\dpl\Bigl(\frac{2k\pi}{p+q}-\frac{\pi}{2q},\frac{2k\pi}{p+q}+\frac{\pi}{2q}\Bigr)\cap\oddIin{m}\neq\emptyset,\\[8pt]
\dpl\Bigl(\frac{2k\pi}{p+q}-\frac{\pi}{2q},\frac{2k\pi}{p+q}+\frac{\pi}{2q}\Bigr)\cap\oddIout{n}\neq\emptyset,
\end{cases}
\quad\forall\ve>0\text{ small enough}.
\]
We thus have
\begin{equation}\label{eq:connGodd}
\oddconn{m}{k}{k-m-1}\quad \text{if }\frac{k}{p+q}\in\Big[\frac mq,\frac mq+\frac1q\Bigr)\bmod1.
\end{equation}
\end{remarque}

Let us denote by $\ximinodd=\ximaxev\in\mu_{p+q}$ the minimal element with respect to the odd ordering, which is the maximal one with respect to the even ordering. There are two cases to be distinguished.
\begin{itemize}
\item
If $p+q$ is even, we have $\ximinodd=\xi^{a(p+q)/2}=-1$ and we set $\kminodd:=(p+q)/2$.
\item
If $p+q$ is odd, we have $\ximinodd=\xi^{-a (p+q-1)/2}$ (\cf \eqref{eq:ordev} for $a$). We set correspondingly $\kminodd:=-a (p+q-1)/2$.
\end{itemize}

\begin{remarque}
Although we could proceed as in the even case to define the Leray covering $(\oddfS_k)_{k=0,\dots,p+q-1}$, it will be convenient for a future use to define it by rotation from the even covering already constructed.
\end{remarque}

For $k\in\{0,\dots,p+q-1\}$, let us set $\{0,\dots,p+q-1\}\ni k':=k+\kminodd\bmod(p+q)$. We now define
\begin{equation}\label{eq:oddalpha}
\oddfS_{k'} :=\ximinodd \cdot \evfS_k
\end{equation}
to be the rotation of $\evfS_k$ by $\ximinodd$.

\begin{proposition}
If we assume that the paths $\evgamma_k$ satisfy the following supplementary properties when $p+q$ is odd:
\begin{starequation}\label{eq:oddevalpha}
\left\{\begin{aligned}
\evgamma_k\cap\evDin_m&\in \evDin_m-\frac{\pi}{q(p+q)},\\
\evgamma_k\cap\evDout_n&\in \evDout_n+\frac{\pi}{q(p+q)},
\end{aligned}\right.
\end{starequation}%
then the odd analogue of Proposition \ref{prop:standtopmodel} holds for $(\oddfS_k)_{k=0,\dots,p+q-1}$ (where we replace $\evDin$ and $\evDout$ in \ref{prop:standtopmodel}\eqref{prop:standtopmodel3} with the corresponding rotated intervals).
\end{proposition}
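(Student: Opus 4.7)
My plan is to derive the odd analog of Proposition~\ref{prop:standtopmodel} by transporting each of its five properties through the homeomorphism $R:\wA\to\wA$ given by multiplication by $\ximinodd$. By the defining relation $\oddfS_{k'}=R(\evfS_k)$ with $k'=k+\kminodd\bmod(p+q)$, properties (1), (2) and (5), as well as (4a)--(4b), are immediate from the even case: $R$ is an orientation-preserving homeomorphism that fixes each boundary circle of $\wA$ setwise, sends square edges to square edges, sends the root $\xi^k$ to $\xi^{k'}$, and preserves intersection patterns. It remains to verify property (3) and the conclusion of (4c), namely that the endpoints of the rotated curves $\oddgamma_{k'}:=R(\evgamma_k)$ lie in the prescribed boundary intervals read off from $\codd(k')$.

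I would split the verification into two cases according to the parity of $p+q$. When $p+q$ is even, the coprimality forces both $p$ and $q$ to be odd, and $\arg\ximinodd=\pi$ is an odd-integer multiple of both $\pi/q$ and $\pi/p$. Consequently $R$ maps $\evDin_m$ exactly onto $\oddDin_{m+(q-1)/2}$ and $\evDout_n$ onto $\oddDout_{n+(p-1)/2}$, so properties (3) and (4c) transport literally. The only thing left is then the combinatorial identity that the pair $\bigl(\evin(k)+(q-1)/2,\evout(k)+(p-1)/2\bigr)\bmod(q,p)$ agrees with $\codd(k')$, which is a short computation comparing the floor functions in \eqref{eq:evinout} and \eqref{eq:oddinout} using $\kminodd=(p+q)/2$.

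The more delicate case is $p+q$ odd, where $\arg\ximinodd=\pm a\pi/(p+q)$ is not a multiple of either $\pi/q$ or $\pi/p$, so $R$ does not carry interval centers to interval centers and the image $R(\evDin_m)$ straddles two of the even intervals (this is why the odd analog speaks of ``corresponding rotated intervals''). This is precisely where the supplementary condition \eqref{eq:oddevalpha} is used: it places the endpoint $\evgamma_k\cap\evDin_m$ at a controlled offset $-\pi/(q(p+q))$ from the center of $\evDin_m$, and similarly on the outer side at offset $+\pi/(q(p+q))$. Using the B\'ezout relation $ap=1+b(p+q)$ and its consequence $aq\equiv-1\bmod(p+q)$, a direct congruence calculation modulo $2\pi/q$ (respectively $2\pi/p$) shows that the rotated endpoints $R(\evgamma_k\cap\evDin_m)$ and $R(\evgamma_k\cap\evDout_n)$ land strictly inside the appropriate width-$\pi/q$ and width-$\pi/p$ intervals indexed by the combinatorial prescription $\codd(k')$ in \eqref{eq:connGodd}. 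Property (3) then follows from the same computation applied to the one boundary interval that was contained in $\partial\evfS_k$.

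\textbf{Main obstacle.} The principal difficulty is the case $p+q$ odd: one must check that the offsets in \eqref{eq:oddevalpha} are exactly the correction required so that a rotation by the non-lattice angle $a\pi/(p+q)$ still lands the endpoints of $\oddgamma_{k'}$ strictly inside the correct intervals, and not at a boundary. This is a bookkeeping exercise driven entirely by the B\'ezout identity. A secondary check is needed in the special case $q=1$, in which $\cev$ fails to be injective and the distinguished path $\evgamma_{(p+1)/2}$ was chosen by hand in Proposition~\ref{prop:standtopmodel}; here one must verify that the rotation by $\ximinodd$ respects that choice so that the resulting $\oddgamma_{k'}$ is still the only odd curve whose combinatorial image duplicates that of another.
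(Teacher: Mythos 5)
Your proposal takes essentially the same route as the paper's proof: reduce everything to checking the endpoint property \ref{prop:standtopmodel}\eqref{prop:standtopmodel4c}, split into the cases $p+q$ even and $p+q$ odd, observe that rotation by $\ximinodd$ maps even intervals exactly onto odd intervals in the even case (with index shift $(q-1)/2$, $(p-1)/2$), and in the odd case use the B\'ezout relation $aq\equiv-1\bmod(p+q)$ together with the offsets in \eqref{eq:oddevalpha} to place the rotated endpoints strictly inside the target intervals prescribed by $\codd(k')$. The congruence bookkeeping you defer to "a direct calculation" is precisely what the paper carries out with the explicit interval computations for $\oddin(k')$ and $\oddout(k')$.
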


We note that since the length of $\evDin_m$ (\resp $\evDout_n$) is $\pi/q$, \eqref{eq:oddevalpha} can be realized by moving the end points of the paths $\evgamma_k$.

\begin{proof}
We only need to check \eqref{prop:standtopmodel4c}. Assume first that $p+q$ is even (so~that~$p$ and $q$ are odd, $\arg\ximinodd=\pi$ and $\kminodd=(p+q)/2$). We will show
\begin{align}\label{align:inDoddD}
\ximinodd\cdot\evDin_{\evin(k)}&=\oddDin_{\oddin(k')},\\ \notag
\ximinodd\cdot\evDout_{\evout(k)}&=\oddDout_{\oddout(k')}.
\end{align}
This amounts to showing
\begin{align*}
\frac2q\Big[\frac{q(k'-(p+q)/2)}{p+q}+\frac12\Big]+1&\equiv\frac2q\Big[\frac{qk'}{p+q}\Big]+\frac1q\mod2,\\
\frac2p\Big\lceil\frac{p(k'-(p+q)/2)}{p+q}-\frac12\Big\rceil+1&\equiv\frac2p\Big\lceil\frac{pk'}{p+q}-1\Big\rceil+\frac1p\mod2,
\end{align*}
which is easily checked.

Assume now that $p+q$ is odd. Let us write $aq\!=\!c(p+q)-1$ for some $c\!\in\!\ZZ$ (then $c-aq$ is odd), so~that
\[
\arg\ximinodd=-a\pi+\frac{a\pi}{p+q}=\frac{(c-aq)\pi}{p+q}-\frac{\pi}{q(p+q)}
\]
and
\[
\ximinodd\cdot\evDin_m=\oddDin_{m'}-\frac{\pi}{q(p+q)},\quad\text{with }\{0,\dots,q-1\}\ni m':=m+\tfrac{c-aq-1}2\bmod q,
\]
and therefore, since $\oddDin_{m'}$ has length $\pi/q$, $(\ximinodd\cdot\evDin_m)\cap\oddDin_{m'}\neq\emptyset$. Similarly,
\[
\ximinodd\cdot\evDout_n=\oddDout_{n'}+\frac{\pi}{p(p+q)},\ \text{with }\{0,\dots,p-1\}\ni n':=n+\tfrac{a-c-ap-1}2\bmod p.
\]
Let us check that $m'=\oddin(k)$ and $n'=\oddout(k)$, therefore proving the odd analogue of \ref{prop:standtopmodel}\eqref{prop:standtopmodel4c}. We have
\begin{align*}
\frac{qk'}{p+q}&=\frac{qk}{p+q}-\frac{aq(p+q-1)/2}{p+q}\in m-\frac{aq(p+q-1)/2}{p+q}+\Big[-\frac12,\frac12\Bigr)\bmod q\\
&=m'+\frac12-\frac1{2(p+q)}+\Big[-\frac12,\frac12\Bigr)\bmod q\\
&=\Big[m'-\frac1{2(p+q)},m'+1-\frac1{2(p+q)}\Bigr)\bmod q,
\end{align*}
which implies $qk'/(p+q)\in[m',m'+1)$, hence $\oddin(k')=m'$. Similarly,
\begin{align*}
\frac{pk'}{p+q}-1=\frac{pk}{p+q}-1&-\frac{ap(p+q-1)/2}{p+q}\\
&\in n-1-\frac{ap(p+q-1)/2}{p+q}+\Bigl(-\frac12,\frac12\Big]\bmod p\\
&=n'-\frac12+\frac1{2(p+q)}+\Bigl(-\frac12,\frac12\Big]\bmod p\\
&=\Bigl(n'-1+\frac1{2(p+q)},n'+\frac1{2(p+q)}\Big]\bmod p,
\end{align*}
hence $\oddout(k')=n'$.
\end{proof}

The obvious odd analogues of Lemma \ref{lem:3xis} and Corollary \ref{coro:even} hold, and we have an isomorphism
\begin{equation}\label{eq:Psiodd}
\Psiodd: H^1(\wA, \oddbeta_!\cK) \isom\bigoplus_{\wzeta\in\mu_{p+q}}\hspace*{-2mm}\cK_{\wzeta}.
\end{equation}

\begin{definition}\label{def:standfilodd}
The odd standard Stokes (decreasing) filtration on $\bigoplus_{\wzeta\in\mu_{p+q}}\hspace*{-2mm}\cK_{\wzeta}$ is defined by the odd ordering, i.e. if we number the elements of $\mu_{p+q}$ by the {\em even} ordering on $\mu_{p+q}=\{\zeta_0 \lev \zeta_1 \lev\cdots\}$ we have $F^k (\bigoplus_{\wzeta\in\mu_{p+q}}\hspace*{-2mm}\cK_{\wzeta}) :=\bigoplus_{j\geq k}\cK_{\wzeta_j}$.
\end{definition}

\subsection{The topological model}
We will now mimic the description of \S\ref{subsec:Stokesabstract}, from which we keep the notation, in the present simplified setting. However we choose the coordinates $(r,\vt_x,\vtn)$ on the product $\wA\times\SS^1_{\eta=0}$. This will make formulas simpler. We thus introduce the isomorphism\enlargethispage{\baselineskip}%
\begin{equation}\label{eq:vrho}
\begin{split}
[0,\infty]\times\SS^1_{u=0}\times\SS^1_{\eta=0}&\underset\sim{\To{\varrho}}[0,\infty]\times\SS^1_{x=0}\times\SS^1_{\eta=0}\\
(r,\vt,\vtn)&\Mto{\hphantom{\varrho}}(r,\vt_x,\vtn)=(r,\vt-\vtn,\vtn).
\end{split}
\end{equation}
The subset $\beta: B \hto \wA \times \SS^1$ contains~$A$ and has its inner (\resp outer) boundary described by \eqref{eq:Evt0at0} (\resp\eqref{eq:Evt0atinfty}). If we fix~$\vtn_\ell$ ($\ell=0,\dots2q-1$) as in \S\ref{subsec:Bwzeta} but with $\ve=0$ for simplicity, we obtain
\[
\bin B^{\vtn_\ell}=\left\{\begin{array}{lcl}
\bigcup_{m=0}^{q-1}\evDin_m&\quad\text{if $\ell$ is even}\quad&\bigcup_{n=0}^{p-1}\evDout_n\\[7pt]
\bigcup_{m=0}^{q-1}\oddDin_m&\quad\text{if $\ell$ is odd}\quad&\bigcup_{n=0}^{p-1}\oddDout_n
\end{array}\right\}
=\bout B^{\vtn_\ell}.
\]
With this notation, and given a local system $\cF$ on $[0,\infty]\times\SS^1_{u=0}$, we set $\wt\cK:=\vrho_*\wt\cF$ and $\cK^\ell:=\wt\cK_{|\wA\times\{\vtn_\ell\}}$. Then \eqref{eq:Labstr} reads
\begin{equation}\label{eq:Labstrsimple}
\begin{array}{c}
\xymatrix@C=.2cm@R=.5cm{
& H^1(\wA \times I_\ell, (\beta^{I_\ell})_!\wt\cK) \ar[dl]_{\sim} \ar[dr]^{\sim} \\
H^1(\wA, (\beta^\ell)_!\cK^\ell) \ar[rr]^{S_\ell^{\ell+1}}\ar[d]_{\Psi^\ell} &&
H^1(\wA, (\beta^{\ell+1})_!\cK^{\ell+1})\ar[d]^{\Psi^{\ell+1}}\\
\dpl\bigoplus_{\wzeta\in\mu_{p+q}}\hspace*{-2mm}\cK^\ell_{\wzeta}\ar[rr]^{\sigma^{\ell+1}_\ell}
& &\dpl\bigoplus_{\wzeta\in\mu_{p+q}}\hspace*{-2mm}\cK^{\ell+1}_{\wzeta}
}
\end{array}
\end{equation}
where $\Psi^\ell$ (\resp $\beta^\ell$) is $\Psiev$ or $\Psiodd$ (\resp $\evbeta$ or $\oddbeta$) according to the parity of $\ell$.

Let $(\bV, \bT)$ be the monodromy pair of the local system $\cF$ on the annulus. Then~$\wt\cK$ can be described as the triple $(\bV,\bT,\id)$, where $\bT$ is the monodromy with respect to $\SS^1_{x=0}$ and $\id$ that with respect to $\SS^1_{\eta=0}$.

\begin{convention}\label{conv:identificationV}
We fix an identification $\Gamma\bigl((\SS^1_{x=0}\moins\{\vt_x=-\ve\})\times\SS^1_{\eta=0},\wt\cK\bigr)$ with~$\bV$, and $\bT$ is recovered as usual by going in the positive direction from $\vt_x=-2\ve$ to $\vt_x=0$. This fixes, for each $\ell$, an identification $\bigoplus_{\wzeta\in\mu_{p+q}}\cK^\ell_{\wzeta}=\bigoplus_{k=0}^{p+q-1}\bV\otimes{\bf1}_k$, with the notation as in \eqref{eq:bVpq}.
\end{convention}

\begin{definition}\label{def:topmodel}
The data $\bigl((\bigoplus_{\wzeta\in\mu_{p+q}}\hspace*{-2mm}\cK^\ell_{\wzeta})_\ell, (\sigma^{\ell+1}_\ell)_\ell,F)$ -- the last entry being defined by \ref{def:standfilev} and \ref{def:standfilodd} -- will be called the topological model for the standard Stokes structure associated to $(\bV, \bT)$.
\end{definition}

For each $\ell=0,\dots,2q-1$, let us fix a diffeomorphism $\wA\times I_\ell\isom\wA\times I_\ell$ by lifting the vector field $\partial_{\vtn}$ to $\wA\times\SS^1_{\eta=0}$ in such a way that the lift is equal to $\partial_{\vtn}$ away from a small neighbourhood of $\partial\wA$ and such that the diffeomorphism induces $B^{I_\ell}\isom B^{\vtn_{\ell+1}}\times I_\ell$. It also induces a diffeomorphism $\wA\times\{\vtn_\ell\}\to\wA\times\{\vtn_{\ell+1}\}$ and the push-forward of $\cK^\ell$ is isomorphic to $\cK^{\ell+1}$. Moreover, \eqref{eq:Evt0at0} and \eqref{eq:Evt0atinfty} show that
\begin{itemize}
\item
for $\ell$ even, ${}^\ell\!\Din_m$ is sent to ${}^{\ell+1}\!\Din_{m-1}$ and ${}^\ell\!\Dout_n$ to ${}^{\ell+1}\!\Dout_n$,
\item
for $\ell$ odd, ${}^\ell\!\Din_m$ is sent to ${}^{\ell+1}\!\Din_m$ and ${}^\ell\!\Dout_n$ to ${}^{\ell+1}\!\Dout_{n+1}$.
\end{itemize}
The Leray covering $(\evfS_k)_k$ is sent to a Leray covering $(\evtfS_k)_k$ and the curves $(\evgamma_k)_k$ are sent to curves $(\evtgamma_k)_k$. The Leray covering $(\evtfS_k)_k$ induces the isomorphism
\[
\wt\Psi^\ell:H^1(\wA, (\beta^{\ell+1})_!\cK^{\ell+1})\isom\bigoplus_{\wzeta\in\mu_{p+q}}\hspace*{-2mm}\cK^{\ell+1}_{\wzeta},
\]
which is transported by the previous isomorphism from $\Psi^\ell$, and the lower part of \eqref{eq:Labstrsimple} reads
\begin{equation}\label{eq:Labstrsimpleell+1}
\begin{array}{c}
\xymatrix@C=.2cm@R=.5cm{
H^1(\wA, (\beta^{\ell+1})_!\cK^{\ell+1})\ar@{=}[rr]\ar[d]_{\wt\Psi^\ell} &&
H^1(\wA, (\beta^{\ell+1})_!\cK^{\ell+1})\ar[d]^{\Psi^{\ell+1}}\\
\dpl\bigoplus_{\wzeta\in\mu_{p+q}}\hspace*{-2mm}\cK^{\ell+1}_{\wzeta}\ar[rr]^{\sigma^{\ell+1}_\ell}
& &\dpl\bigoplus_{\wzeta\in\mu_{p+q}}\hspace*{-2mm}\cK^{\ell+1}_{\wzeta}
}
\end{array}
\end{equation}

\subsubsection{Explicit description of \texorpdfstring{$\sigma_\ell^{\ell+1}$}{sigma} for \texorpdfstring{$\ell$}{ell} even}\label{subsec:cechcompu}
Due to Proposition \ref{prop:standtopmodel}, we know that (regarding $m$ \resp $k-m$ modulo $q$ \resp $p$ as indicated above)
\begin{equation}\label{eq:evalphconn}
\evgamma_k:
\begin{cases}
\evconn{m}{k}{k-m} & \text{if } \frac{k}{p+q}\in\Big[\frac mq,\frac mq+\frac1{2q}\Bigr)\bmod1, \\[.2cm]
\evconn{m+1}{k}{k-m-1} & \text{if }\frac{k}{p+q}\in\Big[\frac mq+\frac1{2q},\frac mq+\frac1q\Bigr)\bmod1,
\end{cases}
\end{equation}
and consequently
\begin{equation}\label{eq:connGev}
\evtgamma_k:
\begin{cases}
\oddconn{m-1}{k}{k-m} & \text{if }\frac{k}{p+q}\in\Big[\frac mq,\frac mq+\frac1{2q}\Bigr)\bmod1,\\[.2cm]
\oddconn{m}{k}{k-m-1} & \text{if }\frac{k}{p+q}\in\Big[\frac mq+\frac1{2q},\frac mq+\frac1q\Bigr)\bmod1.
\end{cases}
\end{equation}
From \eqref{eq:connGodd} we deduce that $\oddgamma_k$ and $\evtgamma_k$ show the same behaviour whenever $\frac{k}{p+q}\in\Big[\frac mq+\nobreak\frac1{2q},\frac mq+\nobreak\frac1q\Bigr)\allowbreak\bmod1$.

Fixing an $m\in\{0,\dots, q-1\}$ and considering the interval $\oddDin_m$, we see that the curves $\oddgamma_k$ with
$\ellmin(m):=\lceil \frac{m(p+q)}{q} \rceil \le k \le \lceil \frac{(m+1)(p+q)}{q}-1 \rceil=:\ellmax(m)$ are exactly the ones starting at $\oddDin_m$. Additionally, we define $\ellmid(m):=\lceil\frac{(2m+1)(p+q)}{2q} \rceil $. We have $\ellmid(m)-\ellmax(m-1)\geq2$ and $\ellmax(m)-\ellmid(m)\geq1$. Moreover, for fixed~$m$, the curves $\oddgamma_k$ and $\evtgamma_k$
\begin{itemize}
\item
connect the same intervals if $\ellmid(m) \le k \le \ellmax(m)$ and
\item
connect different intervals if $\ellmin(m) \le k < \ellmid(m)$.
\end{itemize}

\begin{figure}[htb]
\begin{center}
\begin{picture}(90,40)(0,0)
\put(0,3){\includegraphics[scale=0.4]{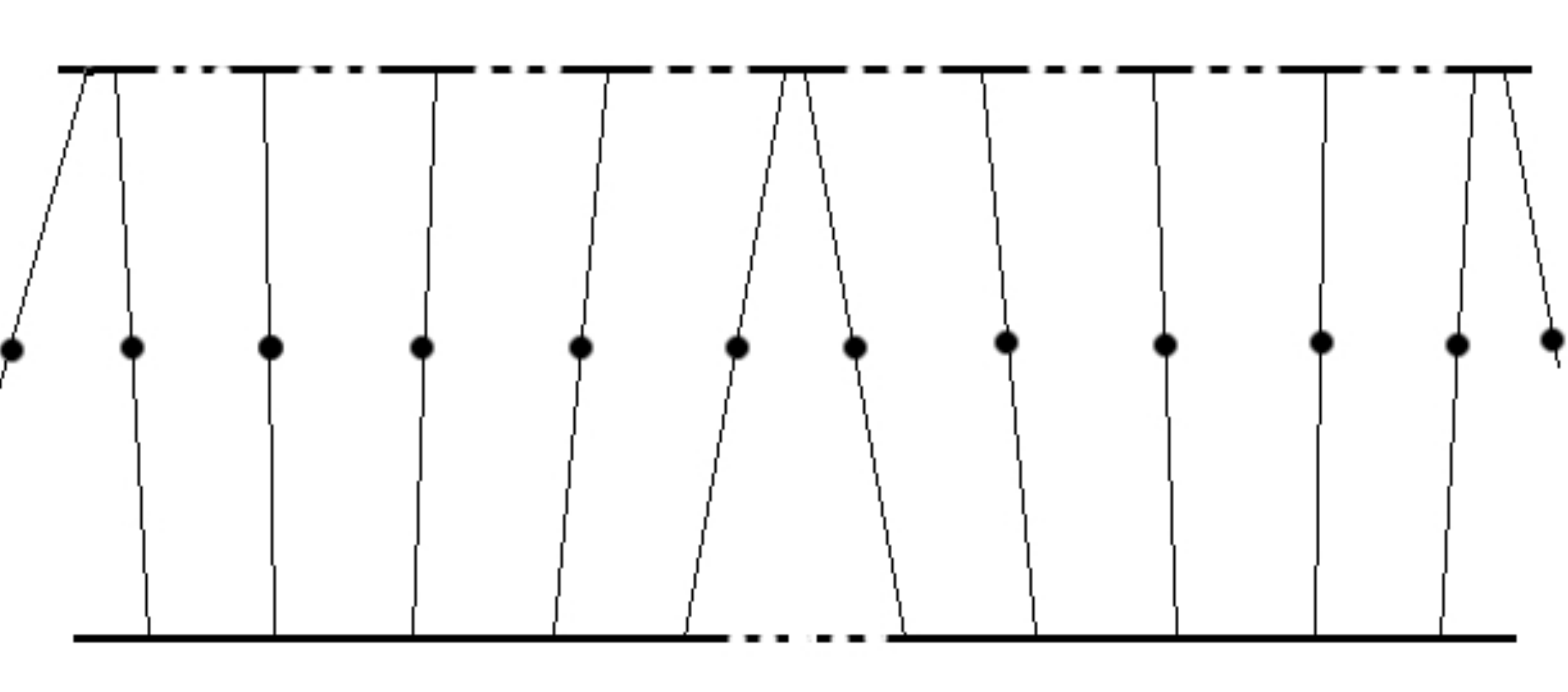}}

\put(0,13){\footnotesize $\ellmax(m)$}
\qbezier(7,21)(3,18)(2,16)
\put(42,15){\footnotesize $\ellmin(m)$}
\qbezier(41,21)(42,18)(45,16)
\put(48,30){\footnotesize $\ellmax(m-1)$}
\qbezier(52,29)(52,25)(48,23)

\put(28,30){\footnotesize $\ellmid(m)$}
\qbezier(30,29)(28,25)(25,23)

\put(16,20){\footnotesize $k$}
\put(9,39){\footnotesize $\oddDout_{k-m-1}$}

\put(20,0){\footnotesize $\oddDin_m$}
\put(60,0){\footnotesize $\oddDin_{m-1}$}
\end{picture}
\end{center}
\caption{The curves $\oddgamma_k$ starting at $\oddDin_m$ and $\oddDin_{m-1}$ for fixed~$m$. Exactly for $\ellmid(m) \le k \le \ellmax(m)$, the curves $\evtgamma_k$ have the same behaviour as $\oddgamma_k$ (we denote $k$ instead of $\xi^k$).}\label{figuremminus1}
\end{figure}

Let us consider the situation between $\ellmax(m-1)$ and $\ellmid(m)$ for a given $m\in\{0,\dots,q-1\}$, as sketched on Figure \ref{figure:lmaxmid1}.
\begin{figure}[htb]
\begin{center}
\begin{picture}(80,40)(0,0)
\put(0,3){\includegraphics[scale=0.38]{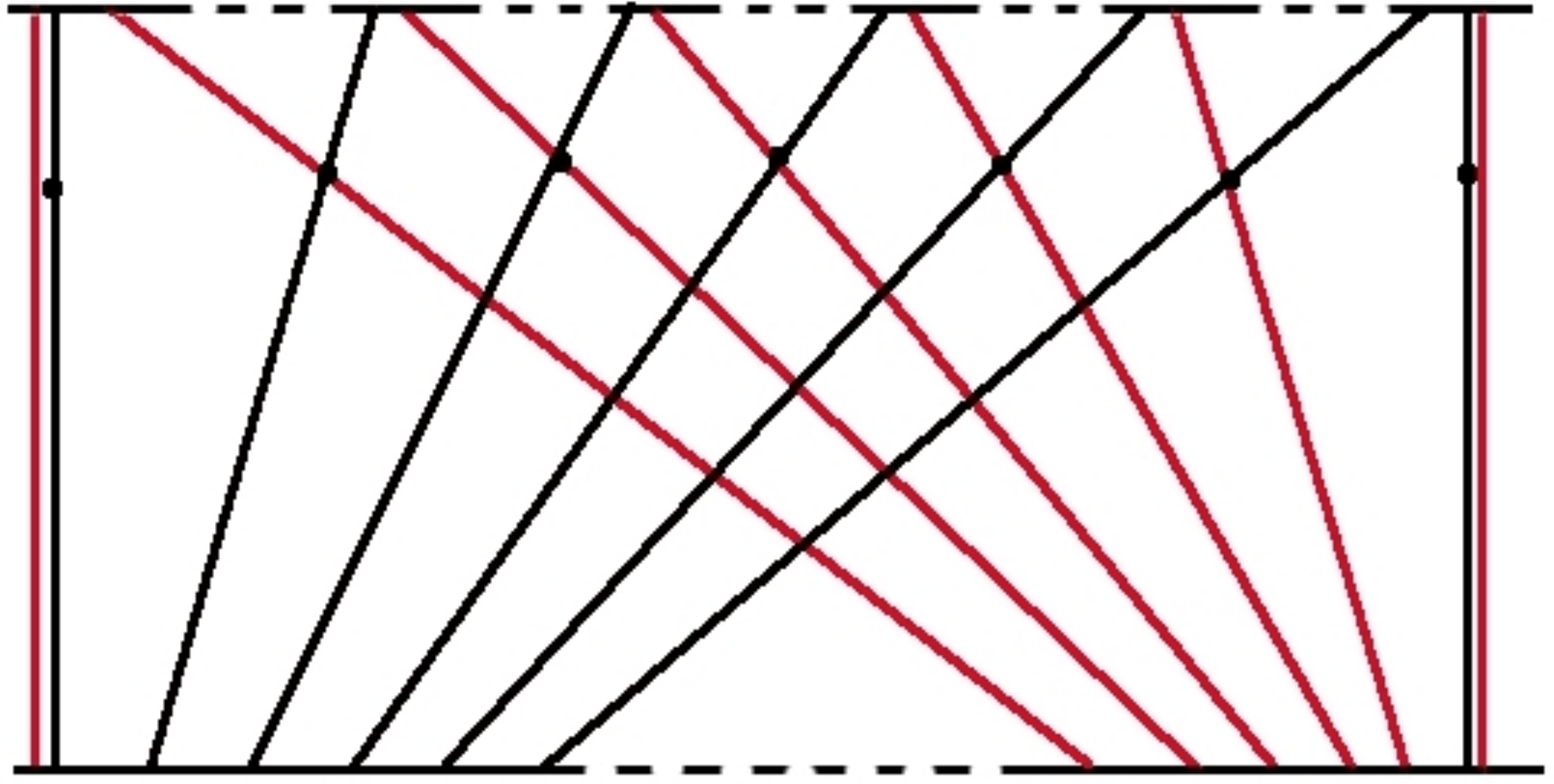}}

\put(-10,30){\footnotesize $\ellmid(m)$}
\put(70,30.3){\footnotesize $\ellmax(m-1)$}
\put(17,30.5){\footnotesize $k$}

\put(3,20){\footnotesize $\oddgamma_k$}
\qbezier(13.5,26)(11,26)(7,23)
\put(6,28){\footnotesize $\evtgamma_k$}
\qbezier(8,30.5)(9,34)(10,35)

\put(10,0){\footnotesize $\oddDin_m$}
\put(60,0){\footnotesize $\oddDin_{m-1}$}
\end{picture}
\end{center}
\caption{The sector $\fS$ and the curves $\oddgamma_k$ and $\evtgamma_k$ in the range $[\ellmax(m-1), \ellmid(m)] $.}\label{figure:lmaxmid1}
\end{figure}

We set
\[
s:=\ellmid(m)-\ellmax(m-1)\geq2.
\]
This picture allows us to define the matrix ${}^\ev\bS(m)\in\mathrm{Mat}_{(s+1) \times (s+1)}(\End(\bV))$. Writing $1=\id_{\bV}$, we set\enlargethispage{\baselineskip}%
\begin{equation}\label{eq:Smatrix}
\arraycolsep3.5pt
{}^{\ev}\bS(m):=
\begin{pmatrix}
1&1&1&1&\dots &1&0 \\
0&-1&-1&-1&\dots &-1&0 \\
0&1&0&0&\dots &0&0 \\
0&0&1&0&\dots &0&0 \\
\vdots&\vdots&\vdots& \ddots & \ddots & \vdots&\vdots \\
0&0&0&\dots&1 &0&0\\
0&0&0&0&\dots &1&1
\end{pmatrix}.
\end{equation}
In case $s$ takes its minimal value $s=2$, the block with diagonal zeros disappears and we obtain
\[\arraycolsep3.5pt
{}^{\ev}\bS(m)=\begin{pmatrix}
1&1&0 \\
0&-1&0 \\
0&1&1
\end{pmatrix}.
\]

\begin{remarque}\label{rem:heur}
The heuristic rule is that, on Figure \ref{figure:lmaxmid1}, the red segment going through $\ellmax(m-1)-1+j$ ($j=1,\dots,s+1$) oriented from in to out ($j$th column of ${}^{\ev}\bS(m)$) is written as a sum of black segments oriented from in to out with the same origin and end horizontal segments. The vertical black segments are also regarded to be red.
\end{remarque}

\begin{lemme}\label{lem:Lmatrix10}
The isomorphism $\sigma_\ell^{\ell+1}:\bigoplus_{k=0}^{p+q-1}\bV\otimes{\bf1}_k\to\bigoplus_{k=0}^{p+q-1}\bV\otimes{\bf1}_k$ does not depend on $\ell$ if $\ell$ is even and is the linear map decomposing into diagonal blocks as follows:
\begin{enumerate}
\item
$\id_{[\ellmid(m),\ellmax(m))}$ (maybe empty), $m=0,\dots,q-1$,
\item
${}^\ev\bS(m)_{[\ellmax(m-1),\ellmid(m)]}$ (of size at least $3$), $m=1,\dots,q-1$,
\item
$\diag(1,\bT, \bT,\dots, \bT) \cdot {}^\ev\bS(0)_{[\ellmax(q-1),\ellmid(0))} \cdot \diag(1, \bT^{-1}, \bT^{-1},\dots, \bT^{-1})$.
\end{enumerate}
\end{lemme}

\begin{proof}
We can first replace $\evtfS_k$ with $\oddfS_k$ when $\ellmid(m)\leq k<\ellmax(m)$ without changing $\wt\Psi_\ell$, and thus identify $\evtgamma_k$ with $\oddgamma_k$ when $\ellmid(m)\leq k\leq\ellmax(m)$. It follows that the restriction of the linear isomorphism $\sigma_\ell^{\ell+1}$
to the subspace
$$
\bigoplus_{k=\ellmid(m)+1}^{\ellmax(m)-1} \cK^{\ell+1}_{\xi^k}
$$
equals the identity. Note that this block may be empty if $p<3q$.

Next, let us understand the situation when $\ellmax(m-1) < k < \ellmid(m)$ (\cf Figure~\ref{figure:lmaxmid1}). In such a case,
\begin{equation}\label{eq:alphaevS}
\oddgamma_k: \oddconn{m}{k}{k-m-1} \text{\quad and \quad} \evtgamma_k: \oddconn{m-1}{k}{k-m}.
\end{equation}
Consider the closed sector
\[
\fS(m):=\bigcup_{k=\ellmax(m-1)-1}^{\ellmid(m)}\oddfS_k=\bigcup_{k=\ellmax(m-1)-1}^{\ellmid(m)}\evtfS_k
\]
of the closed annulus between the curves $\evtgamma_{\ellmax(m-1)}=\oddgamma_{\ellmax(m-1)}\geq2$ and $\evtgamma_{\ellmid(m)}=\oddgamma_{\ellmid(m)}$, due to our previous identification. Our goal is to replace both Leray coverings on $\fS(m)$ with a single one, in order to compute the matrix of $\sigma_\ell^{\ell+1}$ in the corresponding basis. Let consider the closed coverings $\fA:=(A_j)_{j=-1,\dots, s}$ and $\fE:=(E_j)_{j=-1,\dots,s}$, with $A_j=\oddfS_{\ellmax(m-1)+j}$ and $E_j=\evtfS_{\ellmax(m-1)+j}$ as in Figure \ref{figure:lerayAE}.
\begin{figure}[htb]
\begin{center}
\begin{picture}(125,110)(5,0)
\put(5,83){\includegraphics[scale=0.3]{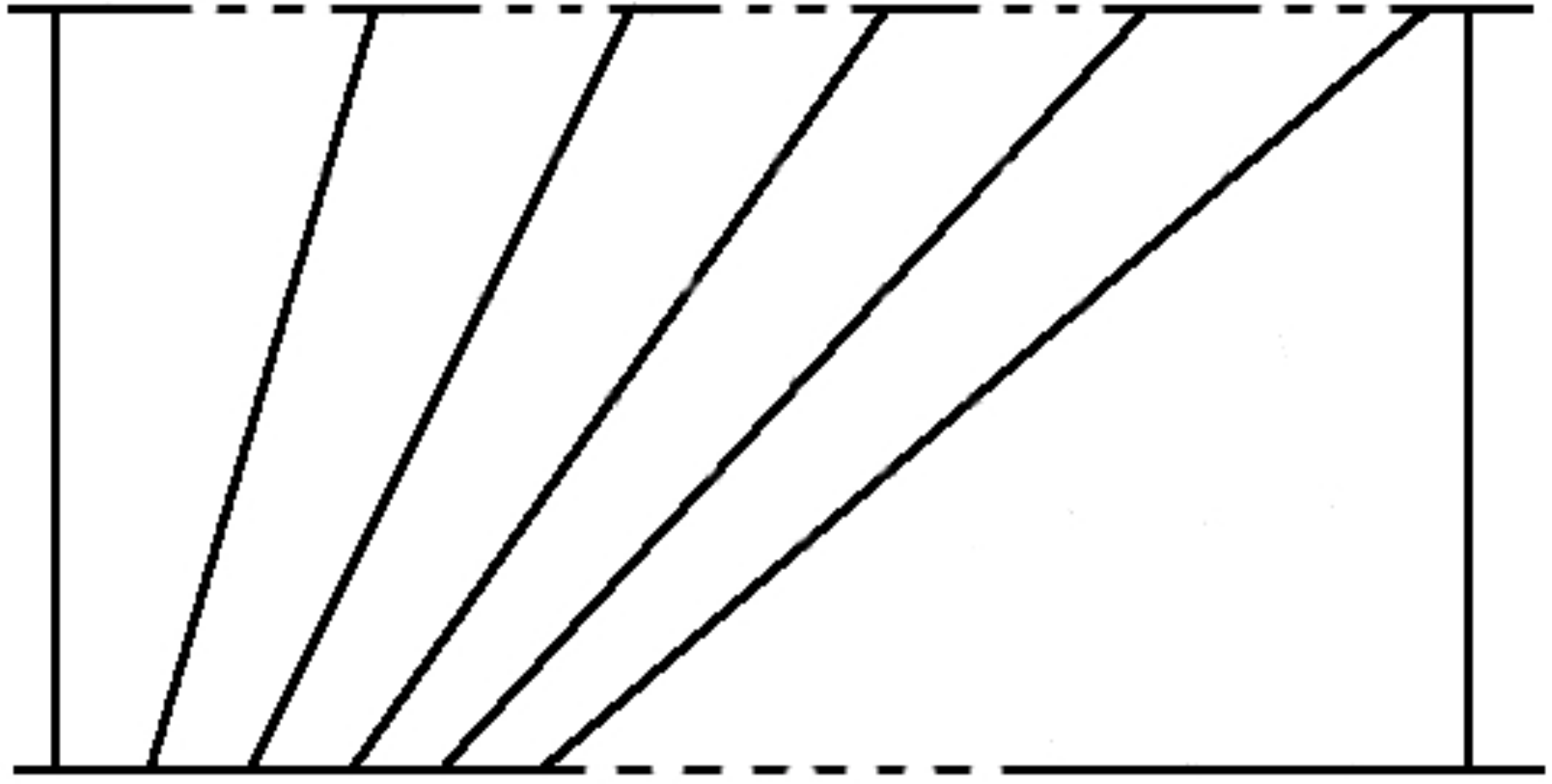}}
\put(70,83){\includegraphics[scale=0.3]{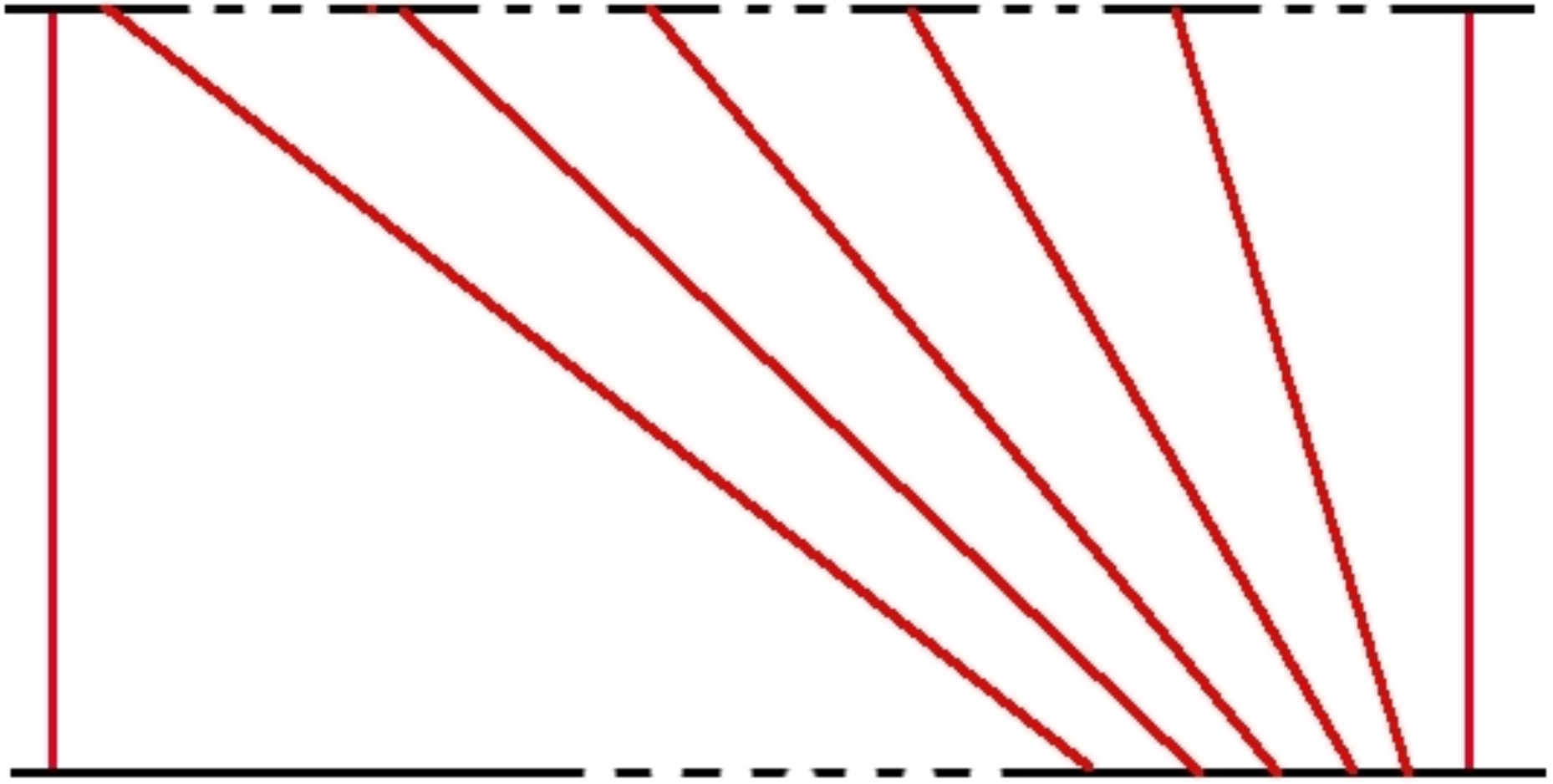}}

\put(5,43){\includegraphics[scale=0.3]{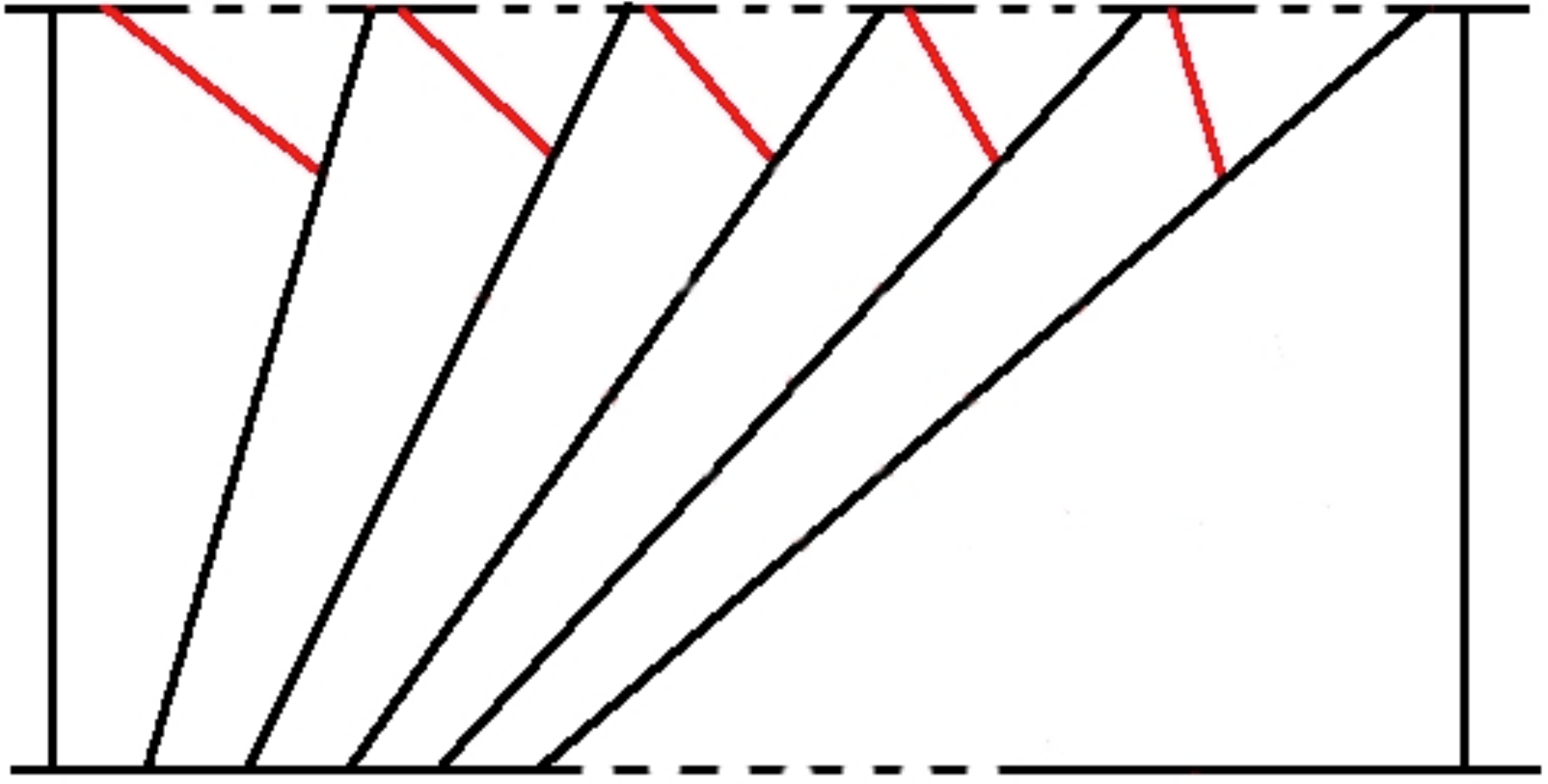}}
\put(70,43){\includegraphics[scale=0.3]{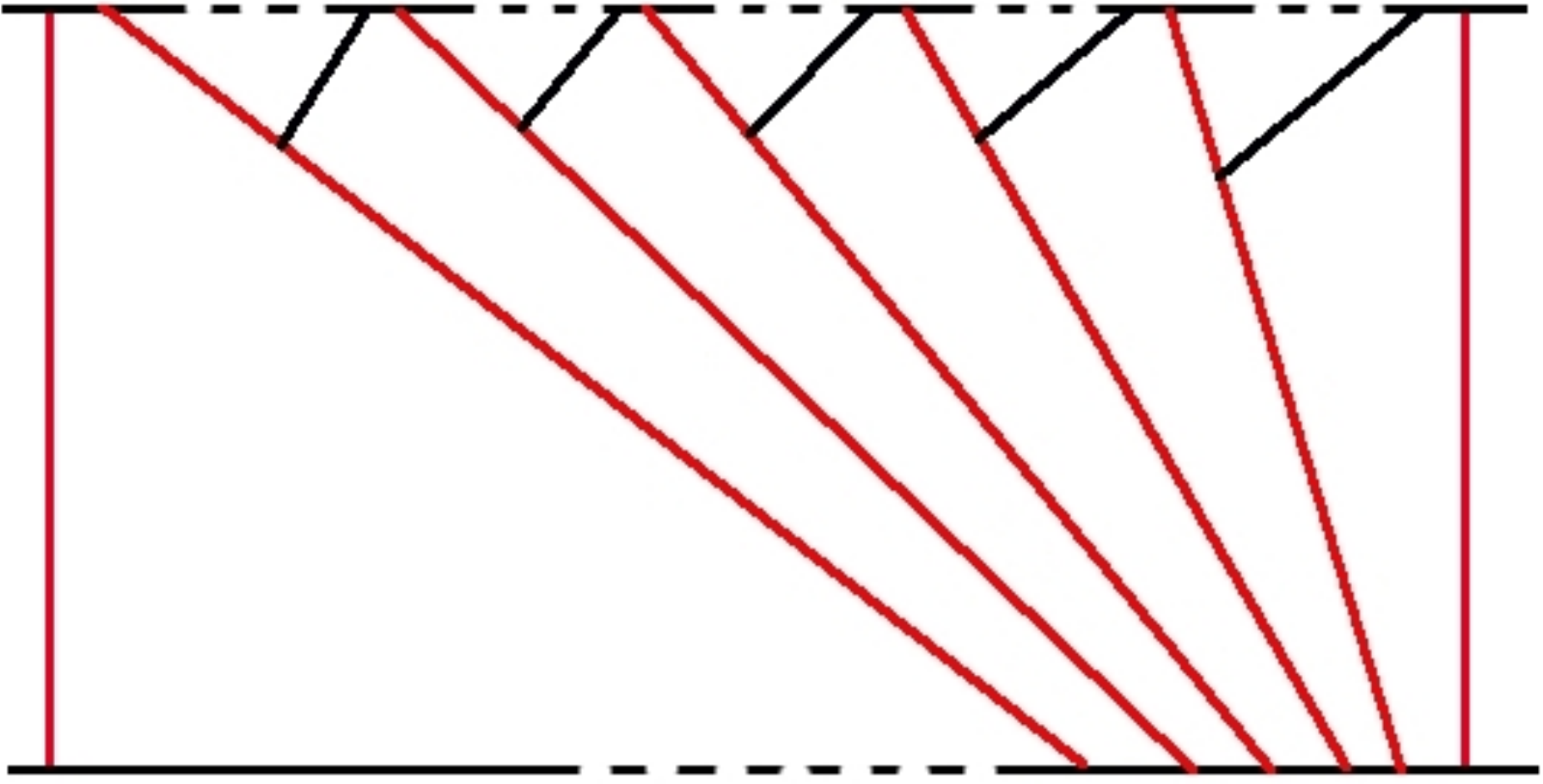}}

\put(35,3){\includegraphics[scale=0.3]{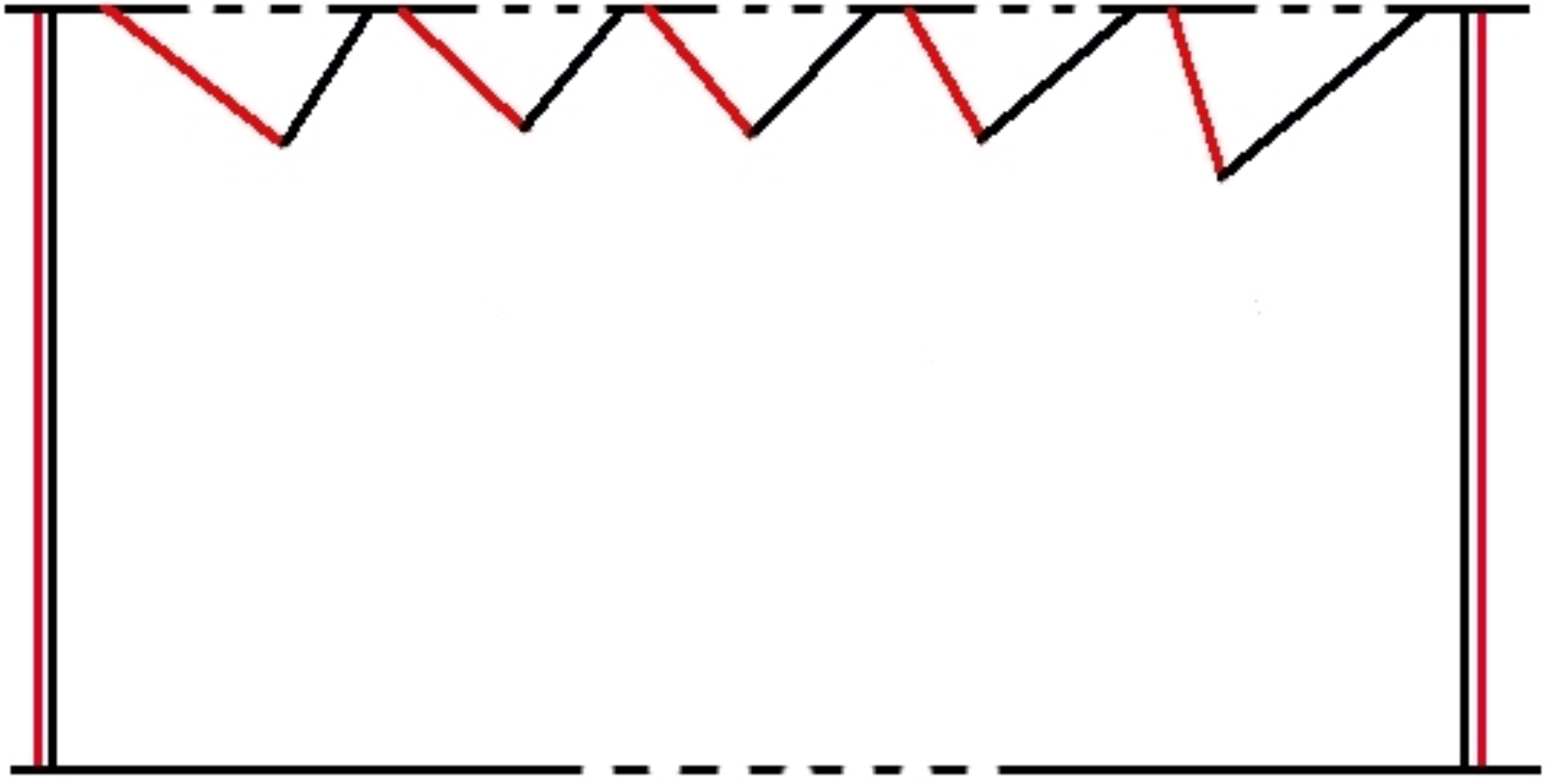}}

\put(2,96){\footnotesize $A_s$}
\put(8,102){\footnotesize $A_{s-1}$}
\put(28,106){\footnotesize $A_j$}
\put(52,92){\footnotesize $A_0$}
\put(60,92){\footnotesize $A_{-1}$}

\put(67,100){\footnotesize $E_s$}
\put(77,98){\footnotesize $E_{s-1}$}
\put(93,105){\footnotesize $E_j$}
\put(118,104){\footnotesize $E_0$}
\put(125,100){\footnotesize $E_{-1}$}

\put(2,56){\footnotesize $B_s$}
\put(25,62){\footnotesize $B^+_j$}
\put(31,68.5){\footnotesize $B^-_j$}
\put(52,52){\footnotesize $B_0$}
\put(60,52){\footnotesize $B_{-1}$}

\put(67,60){\footnotesize $D_s$}
\put(77,58){\footnotesize $D_{s-1}$}
\put(93,64){\footnotesize $D^+_j$}
\put(87,68.5){\footnotesize $D^-_j$}
\put(118,62){\footnotesize $D^+_0$}
\put(114.3,68.5){\footnotesize $D^-_0$}
\put(125,60){\footnotesize $D_{-1}$}

\put(32,16){\footnotesize $C_s$}
\put(48,23){\footnotesize $C_{s-1}$}
\qbezier(49,25.5)(47,26)(45,30)
\put(57,23){\footnotesize $C_{s-2}$}
\qbezier(58,25.5)(56,26)(54,30)
\put(82,23){\footnotesize $C_1$}
\qbezier(83,25.5)(81,26)(80,30)
\put(62,12){\footnotesize $C_0$}
\put(90,12){\footnotesize $C_{-1}$}

\put(10,80){\footnotesize $\oddDin_m$}
\put(45,80){\footnotesize $\oddDin_{m-1}$}
\put(75,80){\footnotesize $\oddDin_m$}
\put(110,80){\footnotesize $\oddDin_{m-1}$}

\put(28,82){\vector(0,-1){8}}
\put(30,76){\footnotesize refinement}

\put(36,34){\vector(-1,1){8}}
\put(35,36){\footnotesize refinement}

\put(93,82){\vector(0,-1){8}}
\put(95,76){\footnotesize refinement}

\put(85,34){\vector(1,1){8}}
\put(90,36){\footnotesize refinement}

\end{picture}
\end{center}
\caption{The coverings $\fA$ and $\fE$ of the sector $\fS$, their refinements $\fB,\fD$, themselves being refinements of a common covering $\fC $.}\label{figure:lerayAE}
\end{figure}
We first pass from both coverings to individual refinements. Let us denote~by
\begin{itemize}
\item
$\fB:=\{B_{-1}, B_0, B_s\} \cup \{B^{\pm}_j \mid j=1,\dots, s-1\}$ the refinement of $\fA$ and
\item
$\fD:=\{D_{-1}, D_{s-1}, D_s\} \cup \{D^{\pm}_j \mid j=0,\dots, s-2\}$ the refinement of $\fE$
\end{itemize}
as shown on Figure \ref{figure:lerayAE}, so that $B_{-1}=D_{-1}=\oddfS_{\ellmax(m-1)-1}$ and $B_s=D_s=\oddfS_{\ellmid(m)}$. From these refinements we again pass back to a common coarser covering $\fC:=\{C_j\mid j=-1,\dots, s\}$ -- in the sense of a closed covering which refines to both~$\fB$ and $\fD$. The construction of these can be read off Figure \ref{figure:lerayAE}. The proof of Lemma \ref{lem:Lmatrix10} consists in a computation of the corresponding base change.
\end{proof}

\begin{remarque}\label{rem:elloddev} \mbox{}
\begin{enumerate}
\item The above computations have been carried out in the case $q<p $. In the case $q>p $, we can proceed in the same way (which amounts to interchanging the roles of the inner and outer boundary components). The case $p=q=1 $ will be studied in \S\ref{subsec:pq1} below.

\item The isomorphism $\sigma^{\ell+1}_\ell $ of Lemma \ref{lem:Lmatrix10} coincides with $\sigma^\odd_\ev $
defined by \eqref{eq:defL01}. Let us verify this statement. Recall that $\evtgamma_k \cap \partial^{\text{in}} \wA \in \oddDin_{\evin(k)-1}$. By definition of $\ellmin(m), \ellmid(m) $ and $\ellmax(m) $, we deduce that
$$
k \in [\ellmid(m), \ellmax(m)] \Longleftrightarrow \evin(k)-1=\oddin(k).
$$
Then $\sigma^{\ell+1}_\ell(v \otimes 1_k)=v \otimes 1_k = \sigma_\ev^\odd (v \otimes 1_k) $. In the other case, $\sigma^{\ell+1}_\ell(v \otimes 1_k) $ is determined by the corresponding column of the block ${}^{\ev}\bS(m) $ (different from the first or last column). The rows with the non-vanishing entries $1,-1,1 $ correspond to the places
$$
\oddminin(k),\quad \oddmaxout(\oddminin(k)) \quad\text{and}\quad \oddmaxout(k)
$$
respectively.
\end{enumerate}
\end{remarque}

\subsubsection{Explicit description for \texorpdfstring{$\sigma^{\ell+1}_\ell$}{sigma} for \texorpdfstring{$\ell$}{ell} odd}

The result in this case is similar. In analogy to \eqref{eq:evalphconn} and \eqref{eq:connGev}, we now have
\begin{equation}\label{eq:oddgammak}
\oddgamma_k:
\begin{cases}
\oddconn{m-1}{k}{k-m} & \text{if } \frac{k}{p+q} \in \Big[ \frac{m}{q}-\frac{1}{2q}, \frac{m}{q} \Bigr) \bmod 1
\\[0.3cm]
\oddconn{m}{k}{k-m-1} & \text{if } \frac{k}{p+q} \in \Big[ \frac{m}{q}, \frac{m}{q}+\frac{1}{2q} \Bigr) \bmod 1
\end{cases}
\end{equation}
and consequently
\begin{equation}\label{eq:oddgammatildek}
{}^{\text{odd}} \widetilde{\gamma}_k:
\begin{cases}
\evconn{m-1}{k}{k-m+1} & \text{if }
\frac{k}{p+q} \in \Big[ \frac{m}{q}-\frac{1}{2q}, \frac{m}{q} \Bigr) \bmod 1
\\[0.3cm]
\oddconn{m}{k}{k-m} & \text{if }
\frac{k}{p+q} \in \Big[ \frac{m}{q}, \frac{m}{q}+\frac{1}{2q} \Bigr) \bmod 1.
\end{cases}
\end{equation}
Hence $\evgamma_k$ and ${}^{\text{odd}} \widetilde{\gamma}_k$ show the same behaviour whenever $\tfrac{k}{p+q} \in\big[ \tfrac{m}{q}, \tfrac{m}{q}+\tfrac{1}{2q}\bigr) \bmod 1$.

Picking up the notation from \S\ref{subsec:cechcompu}, we see that for fixed $m \in \{0, \ldots, q-1 \}$, the curves $\evgamma_k$ start at $\evDin_m $ exactly for
\begin{equation}\label{eq:evkm}
\kmid(m-1) = \Big\lceil \frac{(2m-1)(p+q)}{2q} \Big\rceil \le k \le \Big\lceil \frac{(2m+1)(p+q)}{2q}-1 \Big\rceil = \kmid(m)-1,
\end{equation}
and that among these the curves $\evgamma_k$ and ${}^{\text{odd}}\widetilde{\gamma}_k$
\begin{itemize}
\item connect the same intervals if $\kmin(m) \le k \le \kmid(m)-1 $ and
\item connect different intervals if $\kmid(m-1) \le k < \kmin(m) $.
\end{itemize}

\begin{figure}[htb]
\begin{center}
\begin{picture}(90,40)(0,0)
\put(0,3){\includegraphics[scale=0.4]{mmminus2}}

\put(0,13){\footnotesize $\kmid(m)-1 $}
\qbezier(7,21)(3,18)(2,16)
\put(42,15){\footnotesize $\kmid(m-1) $}
\qbezier(41,21)(42,18)(45,16)
\put(48,30){\footnotesize $\kmid(m-1)-1 $}
\qbezier(52,29)(52,25)(48,23)

\put(28,30){\footnotesize $\kmin(m) $}
\qbezier(30,29)(28,25)(25,23)

\put(16,20){\footnotesize $k$}
\put(9,39){\footnotesize $\evDout_{k-m}$}

\put(20,0){\footnotesize $\evDin_m $}
\put(60,0){\footnotesize $\evDin_{m-1}$}
\end{picture}
\end{center}
\caption{The curves $\evgamma_k$ starting at $\evDin_m $ and $\evDin_{m-1}$ for fixed $m $. Exactly for $\kmin(m) \le k \le \kmid(m)-1 $, the curves ${}^{\text{odd}} \widetilde \gamma_k$ have the same behaviour as $\evgamma_k$.}\label{figuremminus1odd}
\end{figure}

Consequently, the situation between $\kmid(m-1)-1$ and $\kmin(m)$ for a given $m $ gives the exact same picture as before, \cf Figure \ref{figuremminus1} and Figure \ref{figuremminus1odd}. We now can proceed as in the proof of Lemma \ref{lem:Lmatrix10}:

Replacing $\oddtfS_k$ with $\evfS_k$ when $\ellmin(m) \leq k<\ellmid(m)-1$ and thus identifying $\oddtgamma_k$ with $\evgamma_k$ for these $k$, we obtain that the restriction of $\sigma^{\ell+1}_\ell $ to the subspace
$$
\bigoplus_{k=\ellmin(m)}^{\ellmid(m)-1} \cK^{\ell+1}_{\xi^k}
$$
is the identity. For $\ellmid(m-1)-1 < k < \ellmin(m) $, we have
$$
\evgamma_k: \evconn{m}{k}{k-m} \text{\quad and \quad} \oddtgamma_k: \evconn{m-1}{k}{k-m+1}.
$$
Comparing with \eqref{eq:alphaevS}, we see that the appropriate sector of the annulus reproduces the same situation as in the proof of Lemma \ref{lem:Lmatrix10}, \cf Figure \ref{figure:lmaxmid1new}.

\begin{figure}[htb]
\begin{center}
\begin{picture}(80,40)(0,0)
\put(0,3){\includegraphics[scale=0.38]{lerayRefine4neu}}

\put(-10,30.5){\footnotesize $\ellmin(m)$}
\put(70,30.5){\footnotesize $\ellmid(m-1)-1$}
\put(17,31){\footnotesize $k$}

\put(3,20){\footnotesize $\evgamma_k$}
\qbezier(13,26)(11,26)(6,22)
\put(6,28){\footnotesize $\oddtgamma_k$}
\qbezier(8,30.5)(9,34)(10,34.5)

\put(10,0){\footnotesize $\evDin_m$}
\put(60,0){\footnotesize $\evDin_{m-1}$}
\end{picture}
\end{center}
\caption{The sector $\fS$ and the curves $\evgamma_k$ and $\oddtgamma_k$ in the range $[\ellmid(m-1)-1, \ellmin(m)] $.}\label{figure:lmaxmid1new}
\end{figure}

Defining
$$
s:=\kmin(m)-\kmid(m-1)+1 \ge 2
$$
the resulting matrix is the matrix ${}^\odd\bS(m)\in \mathrm{Mat}_{(s+1)\times(s+1)}(\End(V)) $ with exactly the same design as the one defined in \eqref{eq:Smatrix} before. Therefore, we finally obtain the analogous result to Lemma \ref{lem:Lmatrix10}:

\begin{lemme}\label{lem:Lmatrix10odd}
The isomorphism $\sigma^{\ell+1}_\ell:\bigoplus_{k=0}^{p+q-1} V \otimes \mathbf{1}_k \to\bigoplus_{k=0}^{p+q-1} V \otimes \mathbf{1}_k$ does not depend on $\ell $ if $\ell $ is odd and is the linear map decomposing into diagonal blocks as follows:
\begin{enumerate}
\item $\id_{[\kmin(m), \kmid(m)-1)}$ (maybe empty), $m=0, \ldots, q-1 $,
\item $^\odd\bS(m)_{[\kmid(m-1)-1,\kmin(m)] }$ (of size at least $3$), $m=1, \ldots, q-1 $,
\item $\diag(1,\ldots, 1, T) \cdot {}^\odd\bS(0)_{[\kmid(q-1)-1, \kmin(0)]} \cdot \diag(1, \ldots, 1, T^{-1}) $
\end{enumerate}
\end{lemme}

\begin{remarque} The case $q<p $ can again be obtained in an analogous way interchanging the role of the inner and outer boundary (\cf Remark \ref{rem:elloddev}). With the same arguments as in Remark \ref{rem:elloddev} we verify that $\sigma^{\ell+1}_\ell $ for $\ell $ odd coincides with $\sigma_\odd^\ev $ of \eqref{eq:defL01}.
\end{remarque}

\subsubsection{The case $p=q=1 $} \label{subsec:pq1}

Then the Leray coverings $\evfS $ and $\oddfS $ are given as in Figure \ref{fig:pq1}. The curves $\evtgamma_k$ and $\oddtgamma_k$ obtained by the appropriate isotopy are also given in the picture.

\begin{figure}[htb]
\begin{center}
\begin{picture}(122,40)(0,0)
\put(5,0){\includegraphics[scale=0.3]{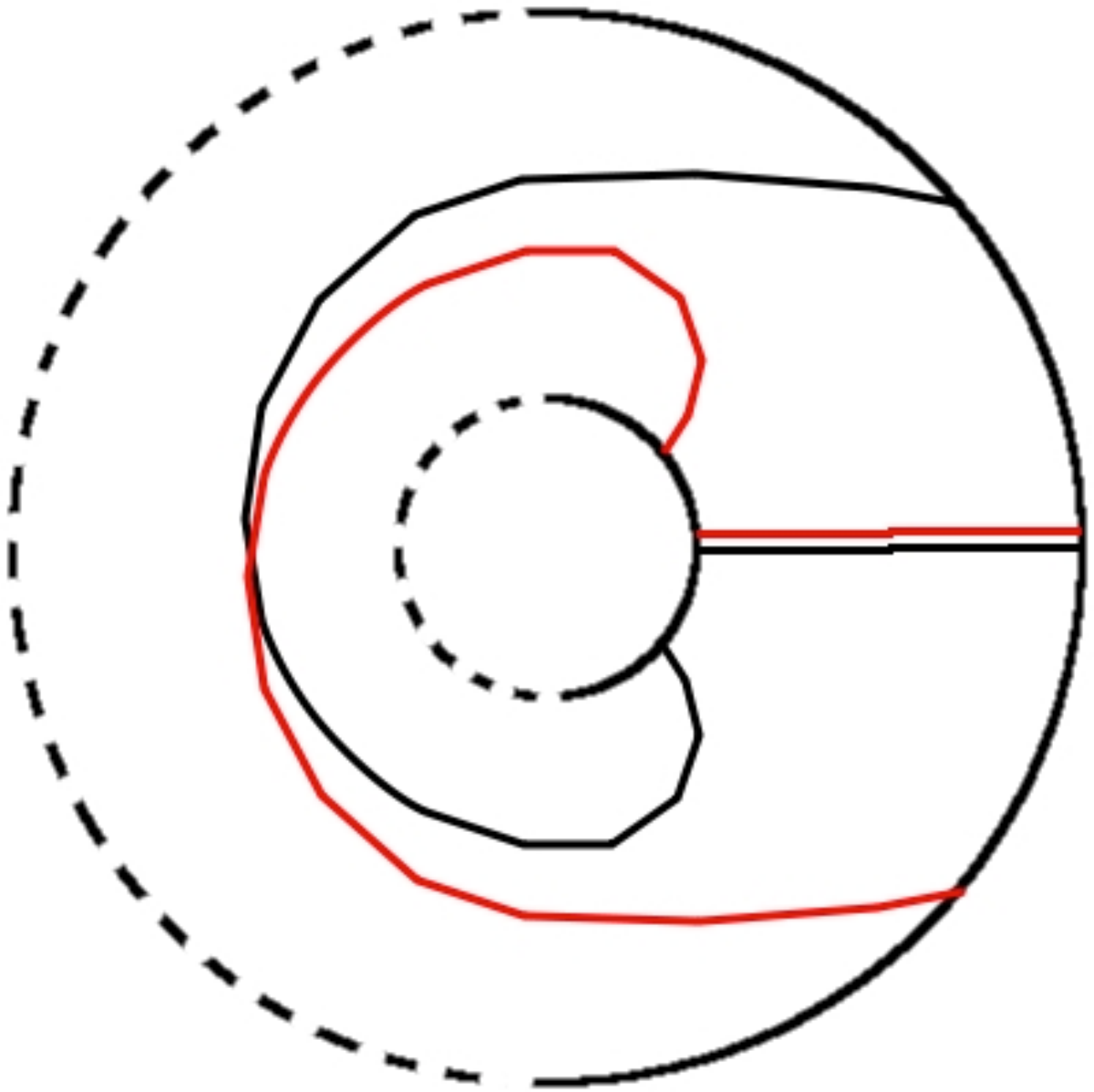}}
\put(75,0){\includegraphics[scale=0.3]{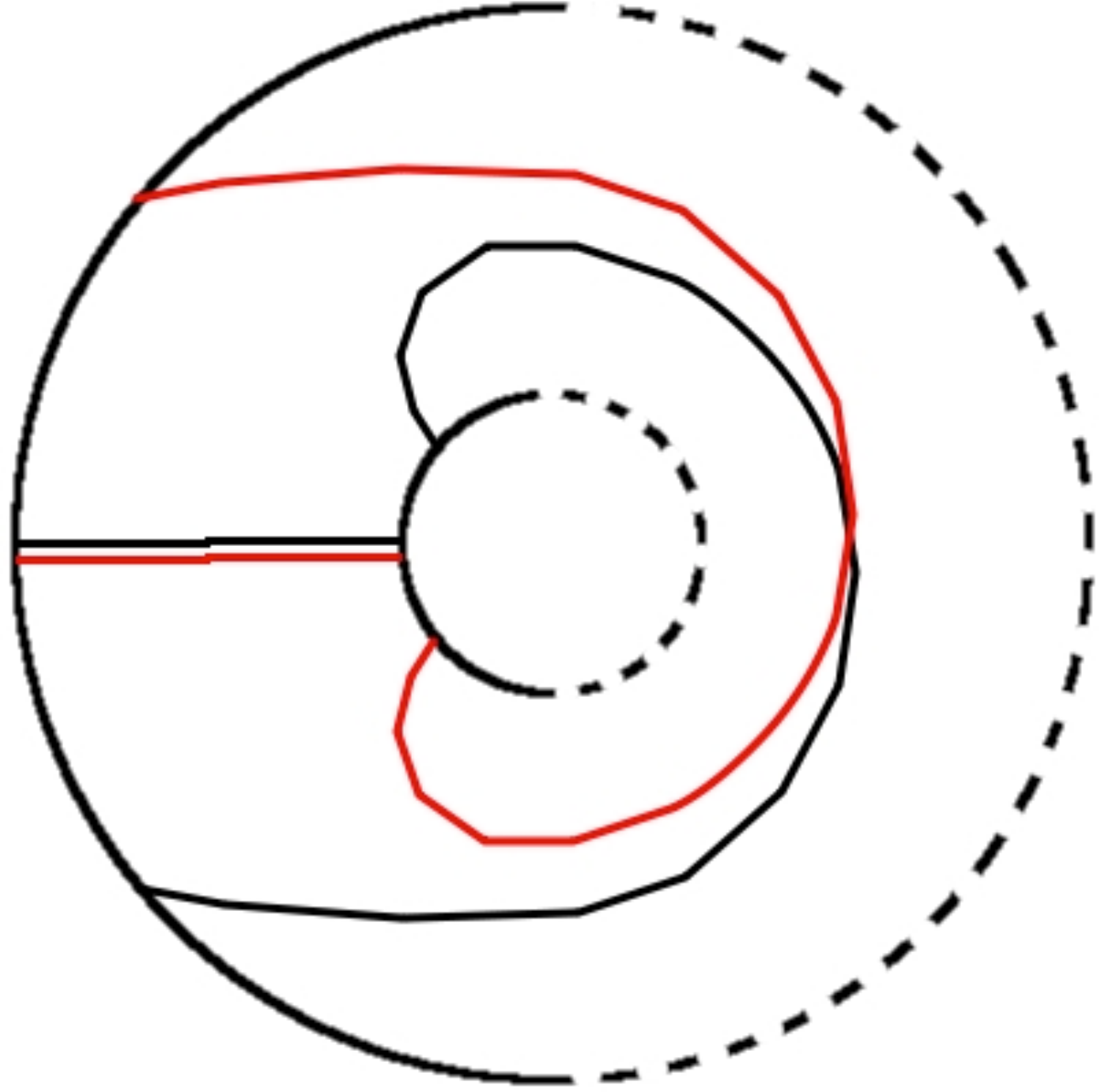}}

\put(25,37){\footnotesize $\evgamma_1 $}
\put(38,18){\footnotesize $\evgamma_0 $}
\put(38,23){\footnotesize $\oddtgamma_0 $}
\put(25,4){\footnotesize $\oddtgamma_1 $}
\put(35,30){\footnotesize $B_0 $}
\put(35,12){\footnotesize $B_1 $}
\put(16,20){\footnotesize $B_+ $}
\put(8,20){\footnotesize $B_- $}

\put(92,37){\footnotesize $\evtgamma_0 $}
\put(78,17){\footnotesize $\evtgamma_1 $}
\put(78,23){\footnotesize $\oddgamma_1 $}
\put(92,4){\footnotesize $\oddgamma_0 $}
\put(85,30){\footnotesize $B_0 $}
\put(85,12){\footnotesize $B_1 $}
\put(111,20){\footnotesize $B_- $}
\put(102,20){\footnotesize $B_+ $}

\put(45,3){\footnotesize $\evB $}
\put(73,3){\footnotesize $\oddB $}
\end{picture}
\end{center}
\caption{The Leray covering $\evfS $ (black) and the curves $\oddtgamma_k$ (red) on the left hand side, the Leray covering $\oddfS $ (black) and the curves $\evtgamma_k$ (red) on the right hand side.}\label{fig:pq1}
\end{figure}

By Remark \ref{rem:heur}, the heuristic rule provides an expectation on the form of $\sigma_\ev^\odd $ and $\sigma_\odd^\ev $ (namely to be the one defined in \ref{def:L}). These expectations can be verified by the analogous procedure used in the proof of Lemma \ref{lem:Lmatrix10}. Let us denote by $\fA:=(A_0, A_1) $ the closed covering induced by the curves $\evgamma_k$ and by $\fC:=(C_0,C_1)$ the one induced by the curves $\oddtgamma_k$. Furthermore, let $\fB $ be the canonical common (ordered) refinement $\fB=(B_0, B_1, B_-, B_+) $ (\cf Figure \ref{fig:pq1}), such that $A_0=B_0 \cup B_+ $, $A_1=B_1 \cup B_- $, $C_0=B_0 \cup B_- $ and $C_1=B_1 \cup B_+ $. This refinement allows to compute the change of the basis from the \Cech complex associated to $\fA $ to the one associated to $\fC $ as we did in the proof of Lemma \ref{lem:Lmatrix10}. The resulting presentation of the base change gives the map $\sigma^\ev_\odd $ as in Definition \ref{def:L}. The same arguments apply for $\sigma^\odd_\ev $ using the picture on the right hand side of Figure~\ref{fig:pq1}.

\subsubsection{Standard Stokes data}

We have now obtained the following result:
\begin{proposition}\label{prop:standStokes}
Let $p,q\in\N $ be two co-prime numbers and let $(\bV, \bT)$ be a vector space with an automorphism. The topological model of Definition \ref{def:topmodel} is isomorphic as a set of linear Stokes data to the standard linear Stokes data of Definition \ref{def:standstokesintro}.\qed
\end{proposition}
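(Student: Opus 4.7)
The plan is to verify, one entry at a time, the three pieces of data that make up a set of linear Stokes data in the sense of \eqref{eq:linearStokesData}: the underlying vector spaces, the connecting isomorphisms $S^{\ell+1}_\ell$, and the filtrations $F\bLMH_\ell$. The identification of the vector spaces is immediate from Convention \ref{conv:identificationV}: the sections of $\wt\cK$ on the complement of a cut yield a canonical identification $\bigoplus_{\wzeta\in\mu_{p+q}}\cK^\ell_{\wzeta}=\bigoplus_{k=0}^{p+q-1}\bV\otimes{\bf1}_k=\bV^{p+q}$, provided we enumerate $\mu_{p+q}$ according to the even ordering \eqref{eq:evenum}. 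Under this enumeration, the filtration of Definition \ref{def:standfilev} (\resp \ref{def:standfilodd}) is tautologically the filtration of Definition \ref{def:standstokesintro}\eqref{def:standstokesintro4} on $\bLMH_{2\mu}$ (\resp $\bLMH_{2\mu+1}$), since both are indexed by the even (\resp odd) ordering on the set of $\wzeta$'s. This settles item \eqref{def:standstokesintro4}.

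For the isomorphisms, I would separate three cases according to the parity of $\ell$ and the wrap-around. When $\ell$ is even and $\ell<2q-1$, Lemma \ref{lem:Lmatrix10} describes $\sigma^{\ell+1}_\ell$ as a block-diagonal map whose non-trivial blocks are the matrices $^{\ev}\bS(m)$ for $m=1,\dots,q-1$, and whose other blocks are identity. Remark \ref{rem:elloddev} then identifies this with $\sigma_\ev^\odd$ of \eqref{eq:defL01} by matching the three non-zero positions $\oddminin(k)-1$, $\oddmaxout(\oddminin(k))$, $\oddmaxout(k)+1$ in each column of ${}^\ev\bS(m)$ with the corresponding entries of formula \eqref{eq:defL01}. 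The parity condition on $[\frac{qk}{p+q}-\tfrac12,\frac{qk}{p+q}]\cap\N$ coincides exactly with the condition $k\in[\ellmid(m),\ellmax(m)]$ versus $k\in[\ellmax(m-1)+1,\ellmid(m)-1]$ used in Lemma \ref{lem:Lmatrix10}. The odd case ($\ell$ odd, $\ell<2q-1$) is symmetric, using Lemma \ref{lem:Lmatrix10odd} together with \eqref{eq:defL12}.

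The wrap-around map $S^{2q}_{2q-1}$ is the delicate one. When one moves $\vtn$ from $\vtn_{2q-1}$ past $\vtn_{2q}=\vtn_0+2\pi$, the identification \eqref{conv:identificationV} for $\wt\cK$ picks up the monodromy $\bT$ of the original local system $\cF$ around the inner boundary. Tracking this in the proof of Lemma \ref{lem:Lmatrix10odd}, the third block gains a factor $\diag(1,\dots,1,\bT)\cdot{}^{\odd}\bS(0)\cdot\diag(1,\dots,1,\bT^{-1})$, which after a cyclic shift converting the final block to the first block becomes the form $\diag(\bT,\dots,\bT)\cdot\sigma^\ev_\odd$ prescribed in Definition \ref{def:standstokesintro}\eqref{def:standstokesintro3}; this cyclic shift is exactly the one implicit in Convention \ref{conv:alphaT}. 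The main obstacle is really bookkeeping: verifying that the two factors $\diag(1,\dots,1,\bT)$ on the left and $\diag(1,\dots,1,\bT^{-1})$ on the right conspire, under the cyclic re-indexing induced by the even ordering, to produce a global $\diag(\bT,\dots,\bT)$ precomposed with $\sigma^\ev_\odd$.

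Finally, for the degenerate case $p=q=1$, one cannot directly apply the combinatorics of $^{\ev}\bS(m)$ and $^{\odd}\bS(m)$ (since the indexing ranges degenerate), and one must instead invoke the direct \v Cech computation of \S\ref{subsec:pq1}, which produces exactly the matrices \eqref{eq:defpq1i} and \eqref{eq:defpq1ii} after the sign conjugation $1_1\mto-1_1$. Combining the four verifications above yields the asserted isomorphism of sets of linear Stokes data.
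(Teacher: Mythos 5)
Your proposal follows the same route as the paper: the paper proves Proposition~\ref{prop:standStokes} by simply combining Lemma~\ref{lem:Lmatrix10} with Remark~\ref{rem:elloddev}, Lemma~\ref{lem:Lmatrix10odd} with the remark following it, and the direct \v Cech calculation of \S\ref{subsec:pq1}, together with the tautological identification of the vector spaces and filtrations through Convention~\ref{conv:identificationV} -- which is exactly what you do in the first two paragraphs and the last. Your observation that the $p=q=1$ case requires the sign conjugation $1_1\mapsto -1_1$ is correct and matches a computation suppressed in the paper's source.

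The one place where your argument is imprecise is the wrap-around map $S_{2q-1}^{2q}$. You correctly sense that this is the delicate point -- Definition~\ref{def:standstokesintro}\eqref{def:standstokesintro3} prescribes an extra $\diag(\bT,\dots,\bT)$ factor that is absent from the other odd-index maps, whereas Lemma~\ref{lem:Lmatrix10odd} claims $\sigma_\ell^{\ell+1}$ ``does not depend on $\ell$'' for $\ell$ odd -- and your first sentence on this (the identification of Convention~\ref{conv:identificationV} acquires the monodromy $\bT$ as $\vtn$ is wound once around $\SS^1_{\eta=0}$, because $\vrho$ shears the $\vt$-coordinate by $\vtn$) points at the right mechanism. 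But your next sentence misreads Lemma~\ref{lem:Lmatrix10odd}: the block $\diag(1,\dots,1,\bT)\cdot{}^{\odd}\bS(0)\cdot\diag(1,\dots,1,\bT^{-1})$ is \emph{not} ``gained'' for $\ell=2q-1$ specifically; it is present in $\sigma_\ell^{\ell+1}$ for \emph{every} odd $\ell$, and it is just the incarnation, through Convention~\ref{conv:alphaT}, of the $\vt_x$-monodromy that occurs when the index $k$ wraps past $p+q-1$. The global $\diag(\bT,\dots,\bT)$ of item~\eqref{def:standstokesintro3} is a second, independent phenomenon attached to the $\vtn$-direction, and your attempt to derive it by a ``cyclic shift'' of the already-present third block conflates the two. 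A clean proof would keep these separate: establish once and for all that $\sigma_\ell^{\ell+1}=\sigma_\odd^\ev$ for all odd $\ell$ under Convention~\ref{conv:identificationV} applied consistently for $\ell\in\{0,\dots,2q-1\}$, and then show separately that the canonical identification $\cK^{2q}_{\wzeta}=\cK^{0}_{\wzeta}$ used to close the cycle differs from the convention's one by precisely $\diag(\bT,\dots,\bT)$, owing to the nontrivial $\vtn$-monodromy of $\wt\cK=\vrho_*\wt\cF$.
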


\subsection{Explicit computation of \texorpdfstring{$\protect\wh T_\top$}{Ttop}}\label{subsec:topmono}
We now prove Proposition \ref{prop:topmonointro}. In principle, the topological monodromy of the Laplace transform $\wrho\ccF^{(0,\infty)}\ccM$ around $\wh\infty $ can be obtained once we determined its Stokes data (see Remark \ref{rem:topmdrmy}). However, using the techniques developed above in this section, it is possible to compute in a straightforward way the monodromy. In particular, we can work on the space $\P^1_u \times \Af^1_\eta $ and do not have to use the blow-up of $(0, \wh\infty)$ -- see sections \ref {subsec:simplifassumpt} and \ref{sec:caseofelement} for the notation.

Let $\wt\PP^1_u$ be the closed annulus obtained as the real blow up of $0,\infty$ in $\PP^1_u$. We set $\vt=\arg u$ as in \S\ref{subsec:GGpsi}. The fiber at $\tau_o\!\neq\!0$ of $\wh M$ (\cf\S\ref{subsec:general}) is obtained~as
\begin{equation}\label{eq:H1F}
H^1\bigl(\wt\PP^1_u,\cH^0\DR^{\rmod{(0,\infty)}}(E^{-\vi(u)-u^p/\tau_o}\otimes R)\bigr),
\end{equation}
where $\wh\cF_{\tau_o}:=\cH^0\DR^{\rmod{(0,\infty)}}(E^{-\vi(u)-u^p/\tau_o}\otimes R)$ is the extension $j_*\cK$ of the sheaf~$\cK$ (corresponding to~$R$, \ie defined by the monodromy $\bT$) on the open annulus $\CC^*_u$ to the open part of the boundary defined by
\refstepcounter{equation}\label{eq:inout}
\begin{align}
-q\vt+\arg\vi_q&\in(\pi/2,3\pi/2)\bmod2\pi,\tag*{\eqref{eq:inout}$_\mathrm{in}$}\label{eq:in}\\
p\vt-\arg\tau_o&\in(\pi/2,3\pi/2)\bmod2\pi,\tag*{\eqref{eq:inout}$_\mathrm{out}$}\label{eq:out}
\end{align}
and extended by zero to the remaining part of the boundary. The action of the monodromy $\tau_o\mto e^{2\pi i}\tau_o$ consists only in moving each of the $p$ open intervals \ref{eq:out} (of length $\pi/p$) of the outer boundary in the positive direction to the next one. The intervals \ref{eq:in} can be written as $\evDin_m+(\arg\varphi_q)/q$, $m=0,\dots,q-1$, and it is possible to choose $\tau_o$ so that the intervals \ref{eq:out} are the intervals \hbox{$\evDout_n+(\arg\varphi_q)/q$}, $n=0,\dots,p-1$. For simplicity we assume below that $\arg\varphi_q=0$. We can therefore use the topological model of the annulus $\wA$ with the subset $\evB$ defined by \eqref{eq:evoddbeta} and the sheaf $\cK$ on $A$, and obtain the isomorphism
\[
H^1(\wt\PP^1_u,\wh\cF_{\tau_o})\simeq H^1(\wA,(\evbeta)_!\cK)=H^1_\mathrm{c}(\evB,\cK).
\]
It will be simpler here, since we do not have to control Stokes filtrations, to work with Borel-Moore homology, for which we refer to \cite{B-M60}, \cite{B-H61} and \cite{Bredon97}. We have an isomorphism $H^1_\mathrm{c}(\evB,\cK)\simeq \HBM(\oddB, \cK)$: On the one hand, by Poincaré duality, $H^1_\mathrm{c}(\evB,\cK)\simeq H^1_\mathrm{c}(\oddB,\cK^\vee)^\vee$; on the other hand, the cap product (\cf\cite[Th.\,V.10.4]{Bredon97} -- note that the result in \loccit\ is stated for $\cK$ being of rank one but generalizes to local systems of arbitrary rank) induces an isomorphism $H_1^\BM(\oddB,\cK)\simeq H^1_\mathrm{c}(\oddB,\cK^\vee)^\vee$.

The elements of $\HBM(\oddB,\cK)$ are represented by Borel-Moore cycles of the form
$$
\Big[ \sum^{\mathrm{loc.fin.}}_{\sigma \in \oldDelta_1(\oddB)} \sigma \otimes w_\sigma \Big]
$$
where $\oldDelta_1(\oddB)$ denotes the set of closed simplicial $1$-chains, $w_\sigma \in H^0(\sigma, \cK)$, and the sum being locally finite. Note that a curve $\gamma:(0,1) \to\evB$ such that $\lim_{t\to0} \gamma(t),\,\lim_{t\to 1} \gamma(t) \in \partial(\wA)$ defines a Borel-Moore cycle $\gamma \otimes w\in\HBM(\oddB, \cK)$ if $w \in H^0(\gamma, \cK)$. In particular, there is a well-defined morphism
\begin{equation}\label{eq:BMbasis3a}
\bigoplus_{k=0}^{p+q-1} \cK_{\xi^k} \lto \HBM(\oddB, \cK),
\end{equation}
defined by $w \mto \big[ \evgamma_k \otimes w \big]$ for $w \in \cK_{\xi^k}$. The long exact sequence for the open embedding $\oddB \subset \wA $ (\cite[Th.\,3.8]{B-M60} or \cite[\S V.5]{Bredon97}) yields that this is a basis for $\HBM(\oddB, \cK) $. Let us denote by
\begin{equation}\label{eq:BMbasis3}
\Psi^\BM: \HBM(\oddB, \cK) \isom \bigoplus_{k=0}^{p+q-1} \cK_{\xi^k}
\end{equation}
its inverse. We can therefore use the curves $\evgamma_k$ to compute this homology by way of the isomorphism $\Psi^\BM$.

We fix an isotopy $\wA \times [0,1] \to \wA \times [0,1] $ of the identity to a diffeomorphism $\psi $ of $\wA $ to itself by lifting the vector field $\partial_t $ on $[0,1] $ in a way that the lift equals $\partial_t $ outside a neighbourhood of the outer boundary of $\wA $ and that the diffeomorphism restricted to the outer boundary maps $\evDout_n $ to $\evDout_{n+1}$. Let $\evtgamma_k$ be the images of the curves $\evgamma_k$ by the diffeomorphism $\psi $. We obtain another isomorphism
$$
\wt \Psi^\BM: H_1^\BM(\oddB, \cK) \to\bigoplus_{k=0}^{p+q-1} \cK_{\xi^k} .
$$
The topological monodromy is then represented by $\Psi^\BM \cdot (\wt \Psi^\BM)^{-1}$.

We see that $\evgamma_k$ and $\evtgamma_k$ connect the boundary intervals in the following way:
$$
\evgamma_k: \evconn{m}{k}{n} \quad\text{and}\quad
\evtgamma_k: \evconn{m}{k}{n+1},
$$
with $(m,n)=(\evin(k), \evout(k)) $ as defined in \eqref{eq:cev}.
Fixing $m \in \{0, \ldots, q-1 \}$, we have (\cf \eqref{eq:evkm})
$$
\evin(k)=m \Longleftrightarrow k \in\big[ \ellmid(m-1), \ellmid(m)-1 \big],
$$
where $\ellmid(m)= \lceil \tfrac{(2m+1)(p+q)}{2q} \rceil $. Therefore, $\evtgamma_k$ and $\evgamma_{k+1}$ connect the same boundary intervals $\evDin_m $ and $\evDout_{n+1}$ for all
\begin{equation}\label{eq:intervl}
k \in\big[ \ellmid(m-1), \ellmid(m)-2\big]
\end{equation}
(which is possibly empty). Note, that if $p+q \ge 3 $, there is at least one $m $ for which this interval is non-empty (\cf Lemma \ref{lem:3xis}). The case $p=q=1 $ was completely determined in \S\ref{subsec:pq1}. The situation is sketched in Figure \ref{leraymdrmy} and \ref{leraymdrmy2}.

\begin{figure}[htb]
\centerline{The Borel-Moore cycles $\evgamma_k$ (black) and $\evtgamma_k$ (red).}\par\smallskip
\begin{minipage}[t]{.46\textwidth}
\begin{center}\setlength{\unitlength}{.9mm}
\begin{picture}(65,35)
\put(0,5){\includegraphics[scale=0.28]{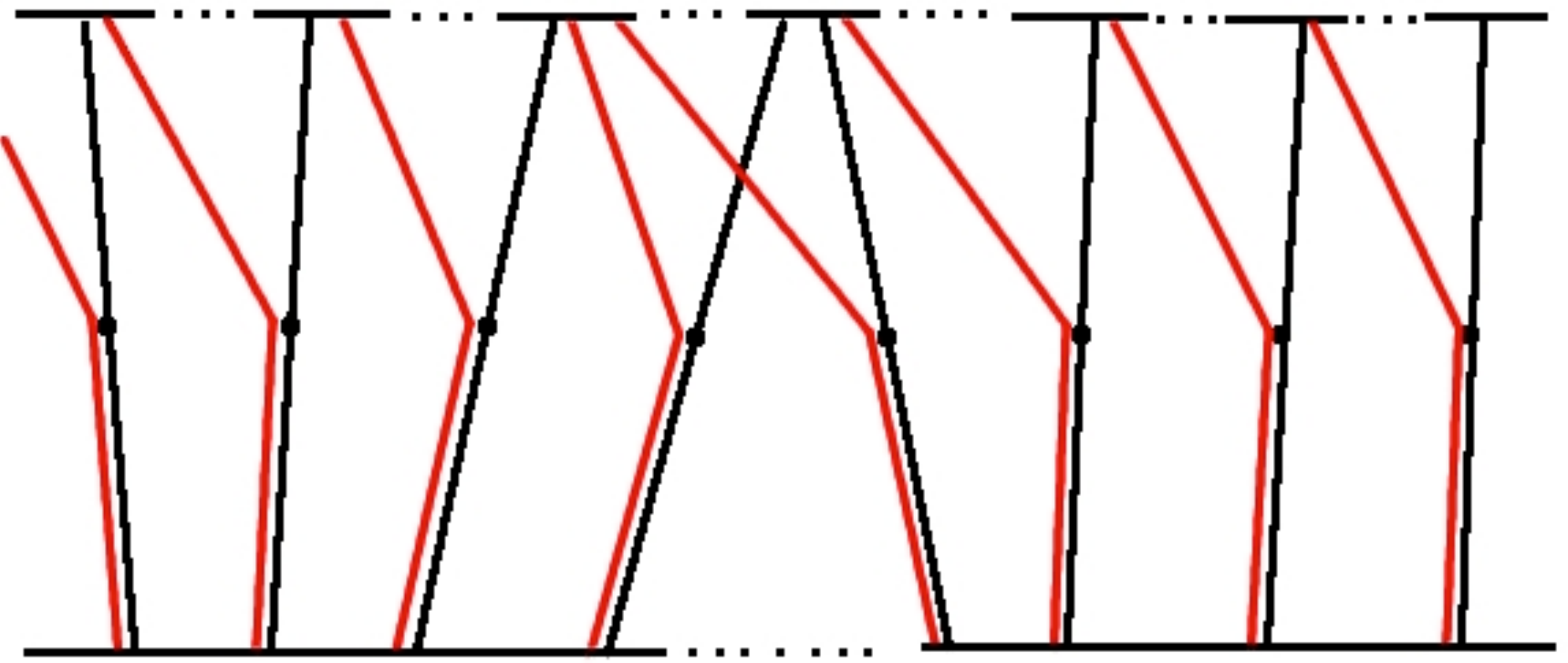}}

\put(10,0){\footnotesize $\evDin_{m+1}$}
\put(45,0){\footnotesize $\evDin_{m}$}

\put(44,17){\footnotesize $k$}
\put(36,32){\footnotesize $\evtgamma_k$}
\qbezier(36,28)(37,28)(38,31.5)
\put(46,32){\footnotesize $\evgamma_k$}
\qbezier(44,25)(45,27)(47,32)
\put(25,0){\footnotesize $\ellmid(m) $}
\qbezier(28,3)(29,10)(28,17)
\end{picture}
\caption{The case $q<p $, the interval \eqref{eq:intervl} always containing at least one integer.}\label{leraymdrmy}
\end{center}
\end{minipage}
\hfill
\begin{minipage}[t]{.46\textwidth}
\begin{center}\setlength{\unitlength}{.9mm}
\begin{picture}(65,35)
\put(0,4){\includegraphics[scale=0.28]{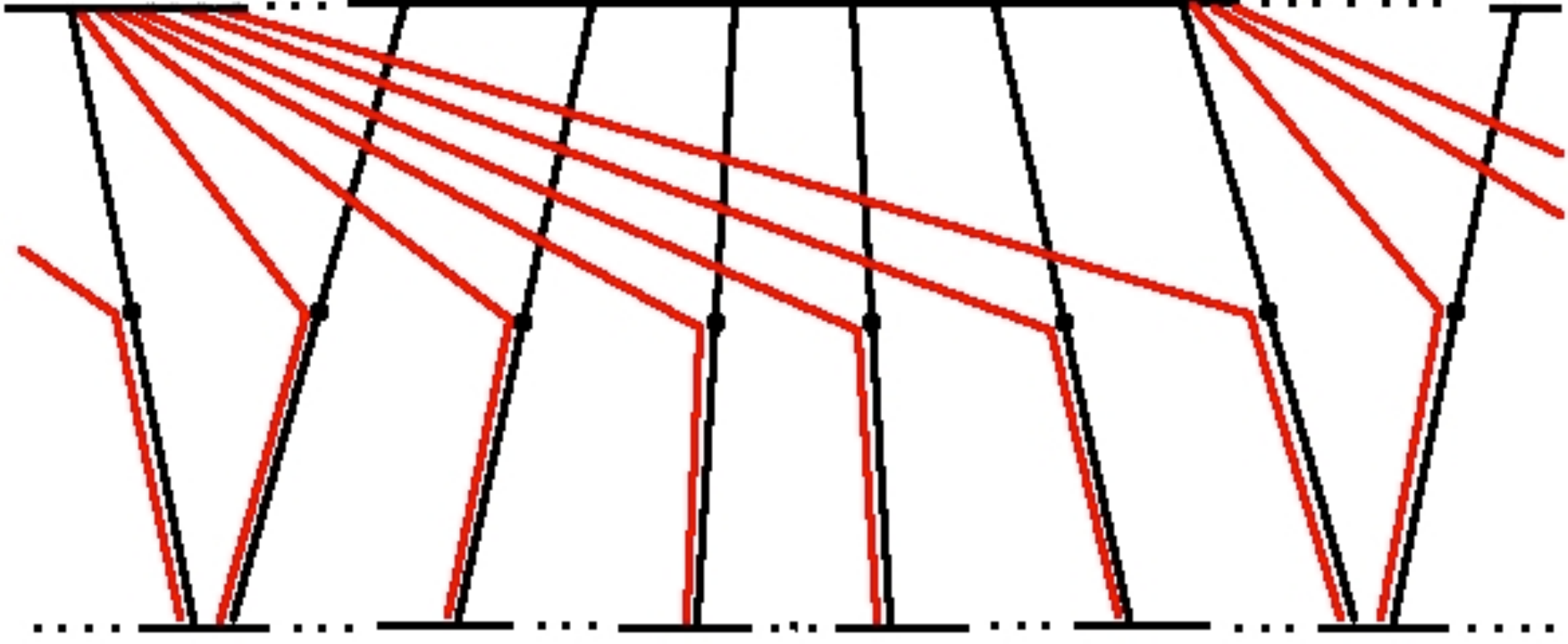}}

\put(12,0){\footnotesize $\evDin_{m+1}$}
\put(24,0){\footnotesize $\evDin_{m}$}
\put(6,1){\footnotesize $(\ast)$}
\put(54,1){\footnotesize $(\ast)$}

\put(44,16){\footnotesize $k$}
\put(25,32){\footnotesize $\evtgamma_k$}
\qbezier(31,32)(37,28)(38,19)
\put(42,32){\footnotesize $\evgamma_k$}
\qbezier(44,32)(44,27)(43,23)
\put(-8,7){\footnotesize $\max\nolimits_\mathrm{out}(k) $}
\qbezier(-3,10)(5,15)(13,17)
\end{picture}
\caption{The case $q>p $, the interval \eqref{eq:intervl} containing at most one integer. Here, it is non-empty only for the values of $m $ where $\evDin_m $ is marked with an $(\ast) $ .}\label{leraymdrmy2}
\end{center}
\end{minipage}
\end{figure}
Now, the assertion of Proposition \ref{prop:topmonointro} can directly be read off from these figures, the index
$\max\nolimits_\mathrm{out}(k)$ denoting the index $k' \ge k$ such that $\mathrm{out}(k')=\mathrm{out}(k) $, but $\mathrm{out}(k'+1)\neq \mathrm{out}(k) $. Note, that in the case $q<p $, we have
$$
\max\nolimits_\mathrm{out}(k) =
\begin{cases}
k & \text{if } k \neq \ellmid(m)-1 \\
\ellmid(m) & \text{if } k=\ellmid(m)-1
\end{cases}
$$
with $m=\mathrm{in}(k)$ (\cf Figure \ref{leraymdrmy}).

In case $q>p $, we have $\max\nolimits_\mathrm{out}(k) = k$ if and only if $\mathrm{in}(k+1)=\mathrm{in}(k)$ (where $k$ is the lower index of the two curves starting at the marked $(\ast)$ intervals in Figure~\ref{leraymdrmy2}).
\qed

\section{Stokes data for \texorpdfstring{$\protect\wrho\ccF^{(0,\infty)}\ccM$}{FM}}\label{finalsection}

We will now prove Theorem \ref{thm:mainintro} asserting that the linear Stokes data attached to $\wrho\ccF^{(0,\infty)}\ccM$ are isomorphic to the standard model of \S\ref{subsec:modelStokes}. We are left to proving that for each $\ell\in\{0,\dots,2q-1\}$, the isomorphism $\Psi_\ell$ in \eqref{eq:Labstrsimple} is compatible with the filtrations of \eqref{eq:Filell} and of Definitions \ref{def:standfilev} and \ref{def:standfilodd}. For that purpose, we will modify the construction of the Leray coverings $\evfS$ \resp $\oddfS$ in such a way that the new Leray coverings moreover induce Leray coverings on each $\evB_{\omega_{\wzeta>0}}$ \resp $\oddB_{\omega_{\wzeta>0}}$ ($\wzeta\in\mu_{p+q}$).

\subsection{The even case}
Throughout this subsection, we use the even order $\lev$ on $\mu_{p+q}$ (\cf Definition \ref{def:standardorder}) and we simply denote it by $<$. The key observation is the following Lemma:

\begin{lemme}\label{lem:alphaxisneu}
There exists a family $\{\gamma_{\wzeta} \mid \wzeta\in\mu_{p+q}\}$ of smooth curves in $\wA$ with the following properties:
\begin{enumerate}
\item\label{lem:alphaxisneu1}
$\gamma_{\wzeta}$ passes through $\wzeta$,
\item\label{lem:alphaxisneu2}
$\gamma_{\wzeta} \subset \evB_{\omega_{\wzeta}>0}$,
\item\label{lem:alphaxisneu3}
setting $\wzeta=\exp(\frac{2k\pi i}{p+q})$ with $k\in\{0,\dots,p+q-1\}$, the end points of $\gamma_{\wzeta}$ belong to the intervals $\evIin{m},\evIout{n}$ with $(m\bmod q,n\bmod p)=\cev(k)$,
\item\label{lem:alphaxisneu4}
for $\wzeta'\neq \wzeta$, we have $\gamma_{\wzeta'}\cap\gamma_{\wzeta}=\emptyset $.
\end{enumerate}
\end{lemme}

\begin{proof}
For $\ell$ even, we have $B_{\omega_{\wzeta}>0}^{\vtn_\ell}=\evB_{\omega_{\wzeta}>0}$, according to \eqref{eq:Bfiltre}. Lemma \ref{lem:G1} shows that there exist exactly two connected components of $G^{-1}\bigl((-\frac{\pi}{2}, \frac{\pi}{2})\bigr)$ containing the point $x=1$ in their closure: these are the components containing $\evDin_0$ and $\evDout_0$ (see Figures \ref{figure:fibreDelta} and \ref{figurerd1}). Let us denote by $B$ the union of their closures.

We will construct paths $\gamma_{\wzeta}$ of the form $\gamma_{\wzeta}=\wzeta \cdot \ogamma_{\wzeta}$ for some $\ogamma_{\wzeta}$ connecting $\evDin_0$ and $\evDout_0$ via $x=1$ (so that \eqref{lem:alphaxisneu1} will be fulfilled) inside the region $B$ and contained in the fiber $G^{-1}(\theta_{\wzeta})$ for some \angle $\theta_{\wzeta}\in (- \frac{\pi}{2}, \frac{\pi}{2})\cap (q\arg(\wzeta)+(-\frac{\pi}{2}+\ve, \frac{\pi}{2}+\ve))$ (so that \eqref{lem:alphaxisneu2} will be fulfilled, according to \eqref{eq:Bfiltre}; note that this intersection is not empty). We will then have
\begin{align*}
\gamma_{\wzeta}\cap\bin\ov A&\subset \wzeta\cdot\evDin_0\cap\bin(\evB_{\omega_{\wzeta}>0}),\\
\tag*{and}
\gamma_{\wzeta}\cap\bout\ov A&\subset \wzeta\cdot\evDout_0\cap\bout(\evB_{\omega_{\wzeta}>0}),
\end{align*}
so \eqref{lem:alphaxisneu3} will be fulfilled according to \eqref{eq:dB} and Remark \ref{rem:cev}\eqref{rem:cev2}.

We are thus reduced to find $(\theta_{\wzeta})_{\wzeta\in\mu_{p+q}}$ in order to fulfill \eqref{lem:alphaxisneu4}. In the following, we will identify the paths $\gamma$ and their images, and we will consider the set
$$
\arg(\gamma):=\{\arg(x) \mid x\in\gamma\}.
$$
According to the ordering at $\vtn_o$ (\cf\S\ref{subsec:Bwzeta}), we will inductively find $\theta_{\wzeta}$ with the slightly stronger condition (preparing for a variant necessary later) that
\begin{equation}\label{eq:thetachi}
\theta_{\wzeta}\in \Bigl({-} \frac{\pi}{2}+\ve, \frac{\pi}{2}\Bigr)\cap \Bigl(q\arg(\wzeta) +\ve +\Bigl({-}\frac{\pi}{2}, \frac{\pi}{2}- \frac{\pi}{p+q}\Bigr)\Bigr),
\end{equation}
assuming that $q\arg({\wzeta}) \neq \pi$ (in which case the intersection is non-empty). The case $q\arg({\wzeta}) =\pi $ appears exactly when $p+q $ is even and $\wzeta=-1=:\ximaxodd$ is the maximal element in $\mu_{p+q}$ with respect to the even ordering. Therefore, we can also use Condition \eqref{eq:thetachi} in the inductive argument in the case $p+q \equiv 0\bmod2 $ if we apply the induction hypotheses to all $\wzeta < \ximaxodd$ only (recall that $<$ means $\lev$), and it is enough to construct an appropriate $\theta_{\ximaxodd}$ at the end of the induction process.

Assuming we already have found $\gamma_{\wzeta'}$ of the above form for $\wzeta'\in\{\wzeta_0,\dots, \wzeta_{k-1}\}$, the task then is to find an \angle $\theta=\theta_{\wzeta}$ with $\wzeta=\wzeta_k$, giving rise to the path $\ogamma_{\wzeta} \subset B$ (whose image is then uniquely determined by $\ogamma_{\wzeta}=G^{-1}(\theta)\cap B$) with the properties:\refstepcounter{equation}\label{eq:thetaBig}
\begin{align*}
\tag*{\eqref{eq:thetaBig}(i)}\label{eq:thetaBigi}
\theta&\in q\arg(\wzeta)+\ve+\Bigl({-} \frac{\pi}{2}, \frac{\pi}{2}- \frac{\pi}{p+q}\Bigr),\text{ and}\\
\tag*{\eqref{eq:thetaBig}(ii)}\label{eq:thetaBigii}
\theta&\in \Bigl({-}\frac{\pi}{2}+\ve,\frac{\pi}{2}\Bigr) \smallsetminus\bigcup_{{\wzeta'}\in\{\wzeta_0,\dots, \wzeta_{k-1}\}}\arg(\wzeta^{-1} \cdot \gamma_{\wzeta'}).
\end{align*}
Condition \ref{eq:thetaBigi} ensures that $G^{-1}(\theta) \subset \wzeta^{-1} \cdot \evB_{\omega_{\wzeta}>0}$ (\cf \eqref{eq:Bfiltre}) and Condition \ref{eq:thetaBigii} that~$\theta $ is such that $\ogamma_{\wzeta}:=G^{-1}(\theta)\cap B$ does not intersect any of the curves $\wzeta^{-1} \gamma_{\wzeta'}=\wzeta^{-1} {\wzeta'}\,\ogamma_{\wzeta'}$ for ${\wzeta'}<\wzeta$, which forces $\gamma_{\wzeta}\cap\gamma_{\wzeta'}=\emptyset $ for all these ${\wzeta'}$.

To this end, consider a ${\wzeta'}\in\{\wzeta_0,\dots, \wzeta_{k-1}\}$. Then by induction, we have $\ogamma_{\wzeta'}=G^{-1}(\theta_{\wzeta'})\cap B$ for some $\theta_{\wzeta'}$ as desired. In particular, each $x\in\ogamma_{\wzeta'}$ satisfies
$$
G(x)=\arg\bigl(g(x)\bigr)=\theta_{\wzeta'}\in \Bigl({-}\frac{\pi}{2}+\ve, \frac{\pi}{2} \Bigr)\qquad (g(x):=f(x)/x^q,\text{ \cf\eqref{eq:fX}}).
$$
Consequently, each point in $\wzeta^{-1} {\wzeta'}\,\ogamma_{\wzeta'}$ is of the form $\wzeta^{-1}{\wzeta'}x$ for such an $x$ and satisfies
\begin{equation} \label{mult:Gxi}
G(\wzeta^{-1}{\wzeta'}x)=\arg\bigl(g(\wzeta^{-1}{\wzeta'}x)\bigr)\\
=q\arg(\wzeta)+\arg\bigl(\wzeta^{\prime p}g(x)+(p+q) (\wzeta^{\prime p}-\wzeta^p)\bigr).
\end{equation}

\begin{remarque}\label{rem:argpi2}
Since ${\wzeta'}<\wzeta$, we know that $\arg(\wzeta^{\prime p} - \wzeta^p)\in (-\frac{\pi}{2}, \frac{\pi}{2}] $. Using the notation of Definition \ref{def:standardorder}, the pairs $(\wzeta',\wzeta)$ such that $\wzeta'<\wzeta$ and $\arg(\wzeta^{\prime p}-\wzeta^p)=\sfrac{\pi}{2}$ are the pairs $(\xi^{ka},\xi^{-ka})$ with $k\in[1,\frac{p+q}2)\cap\NN$. Indeed, set $\wzeta'=\xi^{k'a}$ and $\wzeta=\xi^{-ka}$ with $k'\in(-\frac{p+q}2,\frac{p+q}2)\cap\ZZ$ and $k\in[-\frac{p+q}2,\frac{p+q}2)\cap\ZZ$. Then $\wzeta^{\prime p}-\wzeta^p=\xi^{k'}-\xi^{-k}$, which has argument $\sfrac{\pi}{2}$ if and only if the conditions above are fulfilled. We then have $p\arg\wzeta\in(\pi,2\pi-\frac{2\pi}{p+q}]$.
\end{remarque}

Additionally, since $x\in\ogamma_{\wzeta'}$, we know from \ref{eq:thetaBigi} that
$$
p\arg(\wzeta')+\theta_{\wzeta'}\in\ve+\Bigl({-}\frac{\pi}{2}, \frac{\pi}{2}- \frac{\pi}{p+q}\Bigr).
$$
Notice also that if $\arg(\wzeta^{\prime p}-\wzeta^p)\in (-\frac{\pi}{2}, \frac{\pi}{2})$, we have more accurately
$$
\arg(\wzeta^{\prime p}-\wzeta^p)\in \Big[-\frac{\pi}{2}+\frac{\pi}{p+q}, \frac{\pi}{2}- \frac{\pi}{p+q}\Big].
$$
In such a case, the set
$
\{\wzeta^{\prime p}g(x)+(p+q) (\wzeta^{\prime p}-\wzeta^p) \mid x\in\ogamma_{\wzeta'}\}
$
is contained in a half-line starting at the point $(p+q)(\wzeta^{\prime p}-\wzeta^p)$ -- with fixed \angle inside $[-\frac{\pi}{2}+\frac{\pi}{p+q}, \frac{\pi}{2}- \frac{\pi}{p+q}] $ -- and having direction $p\arg(\wzeta')+\theta_{\wzeta'}\in\ve+(-\frac{\pi}{2}, \frac{\pi}{2}- \frac{\pi}{p+q})$.

Let us set
\begin{equation}\label{eq:Thetadef}
\Theta_{\wzeta'}^{\wzeta}:=\{
\arg\bigl(\wzeta^{\prime p} r e^{i\theta_{{\wzeta'}}}+(p+q)(\wzeta^{\prime p}-\wzeta^p)\bigr) \mid r\in\R_{\ge 0}\}.
\end{equation}
We deduce that there exists $\ve'>0$ such that
\begin{equation}\label{eq:Theta}
\Theta_{\wzeta'}^{\wzeta}\subset
\begin{cases}\dpl
\ve+\Bigl({-}\frac{\pi}{2}+\ve', \frac{\pi}{2}- \frac{\pi}{p+q}- \ve'\Bigr)&\text{if }\arg(\wzeta^{\prime p}-\wzeta^p)\neq\sfrac\pi2,\\[8pt]
\dpl\Bigl({-}\frac{\pi}{2}+\ve+\ve', \frac{\pi}{2}\Big]&\text{if }\arg(\wzeta^{\prime p}-\wzeta^p)=\sfrac\pi2.
\end{cases}
\end{equation}
Let us set $\Theta^{\wzeta}:=\bigcup_{\wzeta'<\wzeta}\Theta_{\wzeta'}^{\wzeta}$. Now, there are two cases to distinguish. Let us consider the interval
\[
\Sigma:= \Bigl(p\arg(\wzeta)+\Bigl({-} \frac{\pi}{2}+\ve, \frac{\pi}{2} \Bigr)\Bigr)\cap\Bigl(\ve+ \Bigl({-}\frac{\pi}{2}, \frac{\pi}{2}- \frac{\pi}{p+q} \Bigr) \Bigr).
\]
Then $\Sigma\neq\emptyset$, and only two cases can occur for the relative position of the two intervals.
\subsubsection*{Case 1}
Assume that
$$
\Sigma=\ve+\Bigl(p\arg(\wzeta) - \frac{\pi}{2}, \frac{\pi}{2}- \frac{\pi}{p+q}\Bigr).
$$
In this case we have $p\arg\wzeta\in(0,\pi-\frac{\pi}{p+q})$, hence Remark \ref{rem:argpi2} implies that there is no $\wzeta'<\wzeta$ such that $\arg(\wzeta^{\prime p}-\wzeta^p)=\sfrac\pi2$, so that the first line of \eqref{eq:Theta} applies for all $\wzeta'<\wzeta$ and thus there exists $\ve'>0$ such that
\[
\Theta^{\wzeta}\subset\ve+\Bigl({-}\frac{\pi}{2}+\ve', \frac{\pi}{2}- \frac{\pi}{p+q}- \ve'\Bigr).
\]
It follows that $\Sigma\not\subset\Theta^{\wzeta}$ and we can find a
$\theta\in q\arg(\wzeta)+\bigl(\Sigma\smallsetminus\Theta^{\wzeta}\bigr)$,
as desired (\cf \eqref{eq:thetaBig}).

\subsubsection*{Case 2}
Assume that
\[
\Sigma=\Bigl({-}\frac{\pi}{2}+\ve, p\arg(\wzeta)+\frac{\pi}{2}\Bigr),
\]
that is, $\pi+\ve<p\arg\wzeta<2\pi-\frac\pi{p+q}+\ve$. From the second line of \eqref{eq:Theta} we deduce that $\Theta^{\wzeta}\subset(-\frac\pi2+\ve+\ve',\frac\pi2]$, so that $\Sigma\not\subset\Theta^{\wzeta}$, and we conclude as in Case 1.

It remains to consider the special case when $p+q$ is even and $\wzeta=\ximaxev=-1$. By the first part of the proof, we can assume that $\theta_{\wzeta'}$ satisfies \ref{eq:thetaBigi} and \ref{eq:thetaBigii} for each $\wzeta'<\wzeta$. The first line of \eqref{eq:Theta} applies to $\Theta^{\wzeta}$. Now, we do not need the stronger condition \eqref{eq:thetaBig} to hold for $\wzeta$ since we do not have to continue with the induction. Therefore, we consider the interval
\begin{equation}\label{eq:speccasepi}
\Sigma' := \underbrace{p \arg(\wzeta)}_{=\pi} + \Bigl(-\frac{\pi}{2}, \frac{\pi}{2} \Bigr) \cap \Bigl(\ve+ \Bigl(-\frac{\pi}{2}, \frac{\pi}{2}\Bigr) \Bigr) = \Bigl(\frac{\pi}{2}, \frac{\pi}{2} +\ve \Bigr)
\end{equation}
instead of $\Sigma$. We conclude again that $\Sigma' \not\subset \Theta^{\wzeta}$ and hence we will find $\theta_{\wzeta} \in q\arg(\wzeta)+(\Sigma' \smallsetminus \Theta) $.
\end{proof}

\begin{remarque}
The case $q=1$ and $p \equiv 1 \text{ mod } 2 $ is covered by the special case above. Due to \eqref{eq:speccasepi}, the resulting picture looks as the one from the topological model in Figure \ref{fig:casq=1} (if $p \neq 1 $) or Figure \ref{fig:pq1} (if $p=1 $).
\end{remarque}

We will now set $\gamma_k:=\gamma_{\xi^k}$ for $k=0,\dots,p+q-1$.

\begin{proposition}\label{prop:leray}
The collection of curves $\{\gamma_k \mid k=0,\dots,p+q-1\}$ from Lemma \ref{lem:alphaxisneu} decomposes $\wA$ into $p+q$ pieces, each being homeomorphic to a closed disc. This decomposition is a Leray covering by closed subsets for each of the sheaves $(\beta^{\vtn_\ell}_{\omega_{\wzeta}>0})_!\cK^{\vtn_\ell}$ with ${\wzeta}\in\nobreak\mu_{p+q}$. It induces the isomorphism \eqref{eq:Psiev}.
\end{proposition}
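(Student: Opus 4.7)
The plan is to verify the three assertions of Proposition~\ref{prop:leray} in turn, relying on the geometric properties of the curves in Lemma~\ref{lem:alphaxisneu} together with the Morse-theoretic description of $\evB_{\omega_{\wzeta}>0}$ from Proposition~\ref{prop:G1}.

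First, I would establish that the $\gamma_k$ partition $\wA$ into closed discs. By properties \eqref{lem:alphaxisneu1} and \eqref{lem:alphaxisneu3} each $\gamma_k$ is a smooth arc from the inner to the outer boundary of $\wA$ passing through $\xi^k$, with endpoints in the prescribed intervals $\evIin{m}$, $\evIout{n}$. Disjointness \eqref{lem:alphaxisneu4} together with the cyclic order of the $\xi^k$ on the unit circle forces the $\gamma_k$ to inherit the same cyclic order, so they cut $\wA$ into $p+q$ closed regions $\fS_k$, each bounded by $\gamma_k$, $\gamma_{k+1}$ and two arcs of $\partial \wA$; each is homeomorphic to a closed disc, and the intersection pattern is $\fS_k \cap \fS_{k+1}=\gamma_{k+1}$, with empty intersection for non-adjacent pairs.

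Second, I would check the Leray property for every sheaf $\mathcal{F}_{\wzeta}:=(\beta^{\vtn_\ell}_{\omega_{\wzeta}>0})_!\cK^{\vtn_\ell}$, $\wzeta\in\mu_{p+q}$. On each intersection $\gamma_{k+1}$ the sheaf $\mathcal{F}_{\wzeta}|_{\gamma_{k+1}}$ is the extension by zero of a local system from an open subset of the contractible interval $\gamma_{k+1}$, so has no higher cohomology. On each piece $\fS_k$, by Proposition~\ref{prop:G1} the intersection $\fS_k \cap \evB_{\omega_\wzeta>0}$ is a disjoint union of subsets each homeomorphic to an open disc (possibly with open boundary intervals on $\partial\fS_k\cap \partial\wA$), whose topological boundary inside $\fS_k$ is contained in the complement of $\evB_{\omega_\wzeta>0}$. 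The same Poincar\'e--Verdier duality argument used in the proof of Corollary~\ref{cor:Geta}, combined with \cite[Lem.\,7.13]{Bibi10} applied to each piece, yields $H^j(\fS_k,\mathcal{F}_{\wzeta}|_{\fS_k})=0$ for $j\geq 1$. In particular this property holds for $\mathcal{F}_{\ximaxev}=\evbeta_!\cK$ itself, so $(\fS_k)$ is a Leray covering for the whole sheaf as well.

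Third, the \v{C}ech spectral sequence of the covering $(\fS_k)$ for $\evbeta_!\cK$ collapses at the $E_2$-page, giving
\[
H^1(\wA,\evbeta_!\cK)\;\simeq\;\bigoplus_{k=0}^{p+q-1} H^0(\gamma_{k},\evbeta_!\cK),
\]
and each summand is canonically identified with $\cK_{\xi^k}$ via property \eqref{lem:alphaxisneu1} and the contractibility of $\gamma_k$. This is precisely the isomorphism $\Psiev$ of \eqref{eq:Psiev}, constructed in Corollary~\ref{coro:even} by means of the original covering $(\evfS_k)$: the two coverings differ by an isotopy relative to the points $\xi^k$ and to the inner/outer boundary intervals, hence induce the same identification.

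The main obstacle I anticipate is the bookkeeping in the second step: one must confirm that for every $\wzeta$ the boundary of $\evB_{\omega_\wzeta>0}$ inside $\wA$ never belongs to $\evB_{\omega_\wzeta>0}$ itself, so that $\mathcal{F}_{\wzeta}$ is an honest extension by zero and the duality/\cite[Lem.\,7.13]{Bibi10} input is justified uniformly in $\wzeta$ and $k$. Once this is in place, the proposition follows along the lines of Corollary~\ref{coro:even}, which it refines.
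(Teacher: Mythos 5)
You correctly reduce the problem to understanding how the curves $\gamma_k$ and the sectors $\fS_k$ intersect the open sets $\evB_{\omega_\wzeta>0}$, but the step where you dispose of this is where the real work lies, and it is not as automatic as you suggest. Your claim that $\mathcal{F}_{\wzeta}|_{\gamma_{k+1}}$ ``is the extension by zero of a local system from an open subset of the contractible interval $\gamma_{k+1}$, so has no higher cohomology'' is false in that generality: for an open subset $U$ of a closed interval $I$, the extension by zero $j_!\cK$ satisfies $H^1(I,j_!\cK)\neq0$ whenever $I\smallsetminus U$ has more than one connected component (compare the exact sequence $0\to j_!\cK\to\cK_I\to i_*\cK|_{I\smallsetminus U}\to0$). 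Likewise, Proposition~\ref{prop:G1} only controls the homotopy type of $\evB_{\omega_\wzeta>0}$ inside the whole annulus $\wA$; it does not, by itself, say anything about the shape of $\evB_{\omega_\wzeta>0}\cap\fS_k$, since the curves $\gamma_k,\gamma_{k+1}$ could a priori slice a component of $\evB_{\omega_\wzeta>0}$ into arbitrarily complicated pieces.

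The missing input is precisely what the paper establishes through a separate Claim: writing out $G(\wzeta^{-1}\xi^kx)$ via \eqref{eq:Gxik} and parametrizing a half-line $\mathfrak{h}=\textup{(i)}+\textup{(ii)}$, one shows by a strict monotonicity argument on $r(t)=|g(\ogamma_k(t))|$ (using that $g'$ vanishes only on $\mu_{p+q}$ and $r\to\infty$ at both ends, $r\to0$ at $\xi^k$) that $\gamma_k\cap\evB_{\omega_\wzeta>0}$ is exactly the union of \emph{two} half-open subintervals, one at each end of $\gamma_k$, so its complement in $\gamma_k$ is a single closed interval. This is what makes the double intersections acyclic, and it also pins down the shape of each $\fS_k\cap\evB_{\omega_\wzeta>0}$ (three possibilities up to homeomorphism, each a disjoint union of closed discs with one closed boundary interval deleted). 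Once that Claim is available, the paper concludes directly by \cite[Lem.\,7.13]{Bibi10} without needing the duality argument of Corollary~\ref{cor:Geta}. Your anticipated ``main obstacle'' (that $\mathcal{F}_\wzeta$ is an honest extension by zero) is not actually the issue; the issue is the combinatorial/topological control of how the Leray pieces meet $\evB_{\omega_\wzeta>0}$, which you have left unjustified.
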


\begin{proof}
According to Lemma \ref{lem:alphaxisneu}, it is clear that the family of curves $(\gamma_k)_k$ decomposes $\wA$ into $p+q$ pieces homeomorphic to closed sectors of $\wA$. Let $\fS$ be one of these. Then $\fS$ is bounded on each side by $\gamma_k$ and $\gamma_{k+1}$ for some $k$, and additionally bounded by an interval in the circle $\SS^1_{r=0}$ and one of the circle $\SS^1_{r=\infty}$ in $\wA=\{(r,\vt_x) \mid r\in [0,\infty], \ \vt_x\in\SS^1\}$, see Figure \ref{fig:leray}.

Now, for any $\wzeta\in\mu_{p+q}$, we know the shape of $\evB_{\omega_{\wzeta}>0}$ due to Proposition \ref{prop:G1}: namely, $\evB_{\omega_{\wzeta}>0}$ emerged from discs with part of the boundary contained in it (namely the intervals $\evIin{m}$ and $\evIout{n}$) which have been glued according at various points $\wzeta\in\mu_{p+q}$ (depending on the relation $<$). The shape of $\evB_{\omega_{\wzeta}>0}\cap\fS$ is therefore easily determined once we know the shape of the boundary component $\gamma_k\cap\evB_{\omega_{\wzeta}>0}$ for $\xi^k\in\fS$.

If $\xi^k\le_{\vtn_o} {\wzeta}$, it is clear that $\gamma_k \subset \evB_{\omega_{\xi^k}>0} \subset \evB_{\omega_{\wzeta}>0}$. If on the other hand ${\wzeta} <\xi^k$, we proceed as in Lemma \ref{lem:alphaxisneu}. For $x\in\ogamma_k=\xi^{-k}\gamma_k$, we have
\begin{equation}\label{eq:Gxik}
G(\wzeta^{-1}\xi^k x)=q\arg\wzeta+\arg\bigl(\xi^{pk}g(x)+(p+q)(\xi^{pk}-\wzeta^p)\bigr).
\end{equation}
The term $g(x)=:r e^{i\theta_k}$ has fixed argument $\theta_k\in (-\frac{\pi}{2},\frac{\pi}{2})$ (see the construction of $\ogamma_k$ in Lemma \ref{lem:alphaxisneu}). From \eqref{eq:Bfiltre}, we deduce that
\begin{equation}\label{eq:Gchi2}
\arg\bigl(\underbrace{\xi^{pk} \cdot r e^{i\theta_k}}_{\textup{(i)}}+\underbrace{(p+q) (\xi^{pk}- \wzeta^p)}_{\textup{(ii)}}\bigr)\in \ve+\Bigl({-}\frac{\pi}{2}, \frac{\pi}{2}\Bigr)\Longleftrightarrow\xi^k x\in\evB_{\omega_{\wzeta}>0}.
\end{equation}
The summand (ii) in \eqref{eq:Gchi2} is a point with argument in $(\frac{\pi}{2}, \frac{3\pi}{2})$ (since ${\wzeta} <\xi^k$) and therefore $\mathfrak{h} :=$ (i)+(ii) describes a half-line starting from (ii) with direction
$$
\textup{(i)}=\frac{2pk\pi}{p+q}+\theta_k.
$$
Note that $r\to\infty $ both for $|x|\to 0$ or $|x|\to\infty$ and since the starting and endpoint of~$\gamma_k$ is contained in $\evB_{\omega_{\wzeta}>0}$, we see that the latter direction is contained in~\hbox{$\ve+(-\tfrac{\pi}{2}, \tfrac{\pi}{2})$}.
Let $t \mto \ogamma_k(t) $, $t \in [0,2] $ be a regular parametrization of $\ogamma_k$ with $\ogamma_k(1)=1 $ and consider the resulting parametrization
$$
t \mto h(t):= r(t) \cdot \xi^{pk} e^{i\theta_k} + (p+q) (\xi^{pk}- \wzeta^p)
$$
of the half-line $\mathfrak{h}$ with $r(t):=|g(\ogamma_k(t))|$.

\begin{claim*}
The parametrization $h $ of the half-line $\mathfrak{h}$ satisfies
$$
h^{-1}\Bigl(\ve + \Bigl(-\frac{\pi}{2}, \frac{\pi}{2}\Bigr) \Bigr) = \bigl(0, t_0 \bigr) \cup \bigl(t_1,2 \bigr)
$$
for some $0<t_0<1<t_1<2 $.
\end{claim*}

If this claim is proved, we deduce by \eqref{eq:Gchi2} that
$$
\xi^k \ogamma_k(t) \in \evB_{\omega_{\wzeta}>0} \Longleftrightarrow t \in \bigl(0, t_0 \bigr) \cup \bigl(t_1,2 \bigr),
$$
and therefore $\gamma_k\cap\evB_{\omega_{\wzeta}>0}$ is homeomorphic to the union $\mathcal{I}_{\text{in}} \cup \mathcal{I}_{\text{out}}$ of two half-closed intervals, $\mathcal{I}_{\text{in}}$ starting at $\evIin{m}$, and $\mathcal{I}_{\text{out}}$ starting at $\evIout{n}$ with \hbox{$(m\bmod q,n\bmod p)=\cev(k)$}.

\begin{proof}[Proof of the claim]
Due to the construction of $\ogamma_k$, we know that
$G(\ogamma_k(t))=\arg(g(\ogamma_k(t)))=\theta_k$ for all $t $ with $\ogamma_k(t) \neq 1 $ (recall that $G(x) $ is not defined for $x=1 $). We deduce that
$$
\im\bigl(g(\ogamma_k(t)) e^{-i \theta_k}\bigr)=0 \quad\text{and}\quad \reel\bigl(g(\ogamma_k(t)) e^{-i \theta_k}\bigr) \ge 0
$$
for all $t $. Consequently, $r(t)=|g(\ogamma_k(t))|=|g(\ogamma_k(t)) e^{-i \theta_k}|= g(\ogamma_k(t)) e^{-i \theta_k}$. Now
$$
\frac{d}{dt} r(t)= \frac{d}{dt}g(\ogamma_k(t)) e^{-i \theta_k} = g'(\ogamma_k(t)) \cdot \frac{d}{dt} \ogamma_k(t) e^{-i\theta_k}
$$
and since $g'(x)=0 $ if and only if $x \in \mu_{p+q}$, the real function $r(t) $ is strictly monotone on $(0,1) $ and $(1,2) $ with $\lim_{t\to0} r(t)=\lim_{t\to2} r(t)=\infty $ and $\lim_{t\to1} r(t)=0 $.
\end{proof}

We can now finally understand the shape of $\evB_{\omega_{\wzeta}>0}\cap\fS$. Let $\gamma_k, \gamma_{k+1} \subset \fS$ be the boundary curves. Due to Lemma \ref{lem:3xis}, we know that, up to interchanging the inner and outer boundaries, the sector $\fS$ looks as in Figure \ref{fig:fS}.

\begin{figure}[htb]
\begin{center}
\begin{picture}(60,45)
\put(10,0){\includegraphics[scale=0.4]{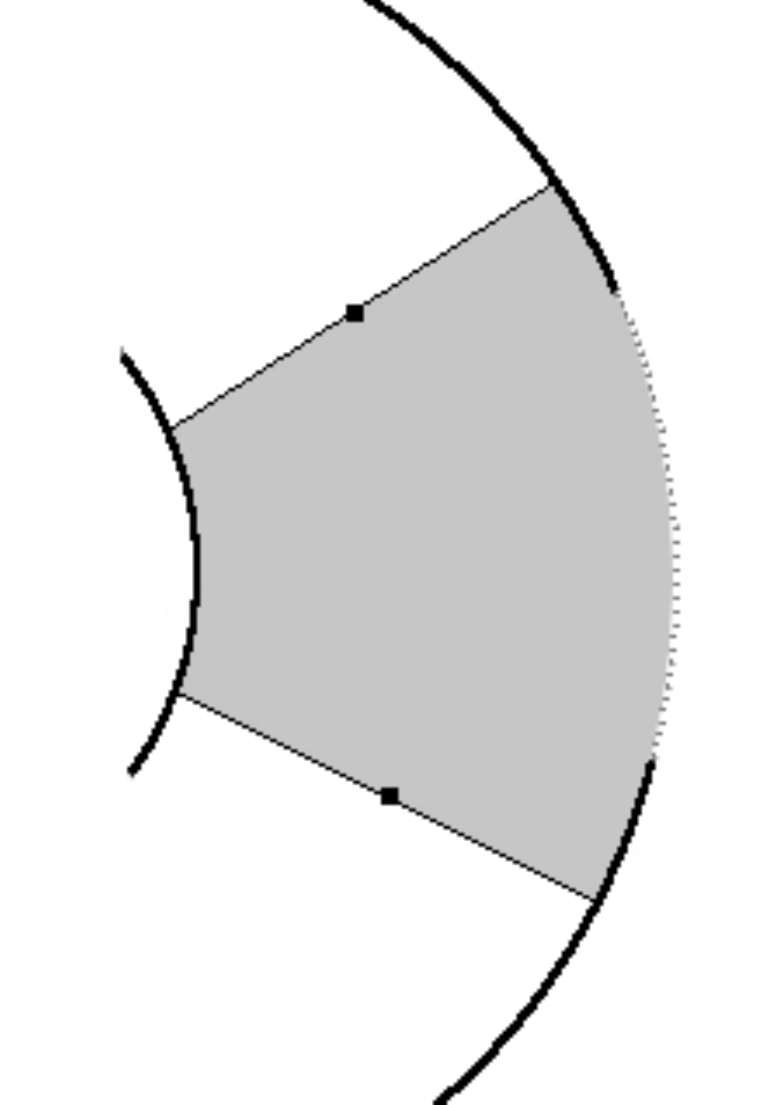}}

\put(7,23){\footnotesize $\evIin{m}$}
\put(22,34){\footnotesize $\xi^{k+1}$}
\put(24,9){\footnotesize $\xi^k$}
\put(33,40){\footnotesize $\evIout{n+1}$}
\put(35,4){\footnotesize $\evIout{n}$}

\qbezier(30,22)(35,25)(40,30)
\put(40,30){\footnotesize $\fS$}

\end{picture}
\end{center}
\caption{The typical shape of $\fS$.}\label{fig:fS}
\end{figure}

We have two cases for the boundaries: either $\gamma_k \subset \evB_{\omega_{\wzeta}>0}$ or $\gamma_k\cap\evB_{\omega_{\wzeta}>0}=\mathcal{I}_{\text{in}} \cup\nobreak \mathcal{I}_{\text{out}}$, and the same for $\gamma_{k+1}$. From our knowledge of the homotopy type of $\evB_{\omega_{\wzeta}>0}$, we deduce that $\evB_{\omega_{\wzeta}>0}\cap\fS$ is homeomorphic to one of the three spaces sketched in Figure \ref{fig:fS2}.

\begin{figure}[htb]
\begin{center}
\begin{picture}(125,45)
\put(10,0){\includegraphics[scale=0.4]{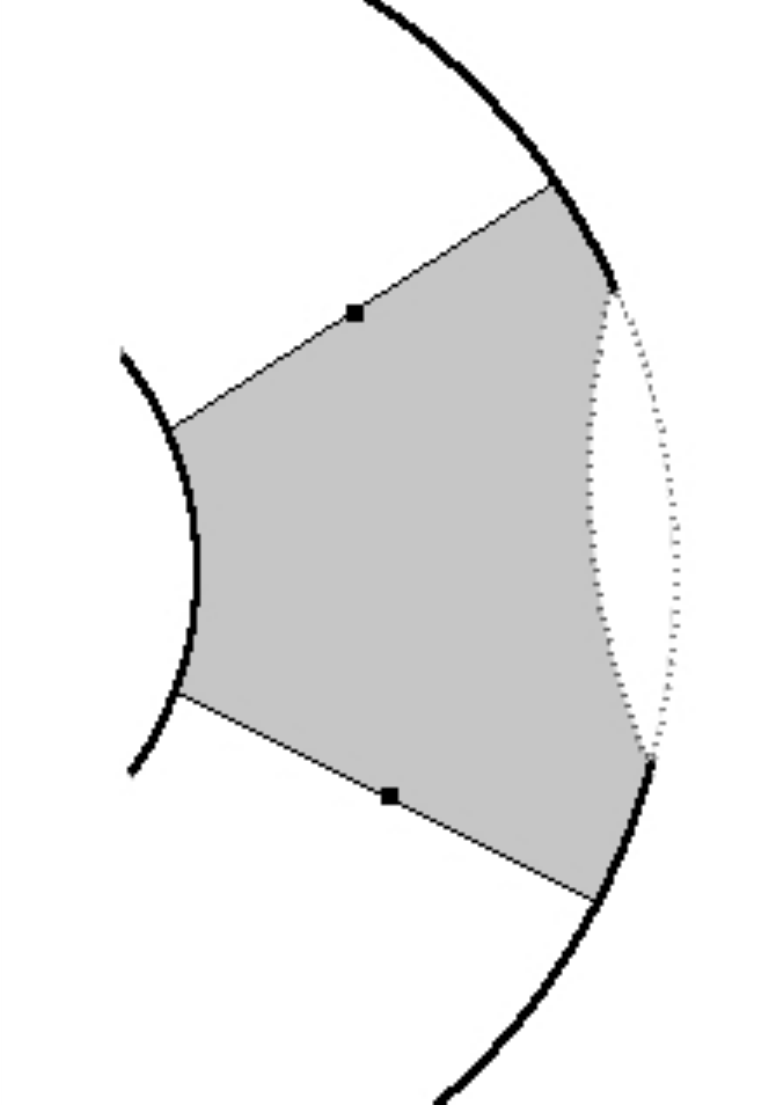}}
\put(50,0){\includegraphics[scale=0.4]{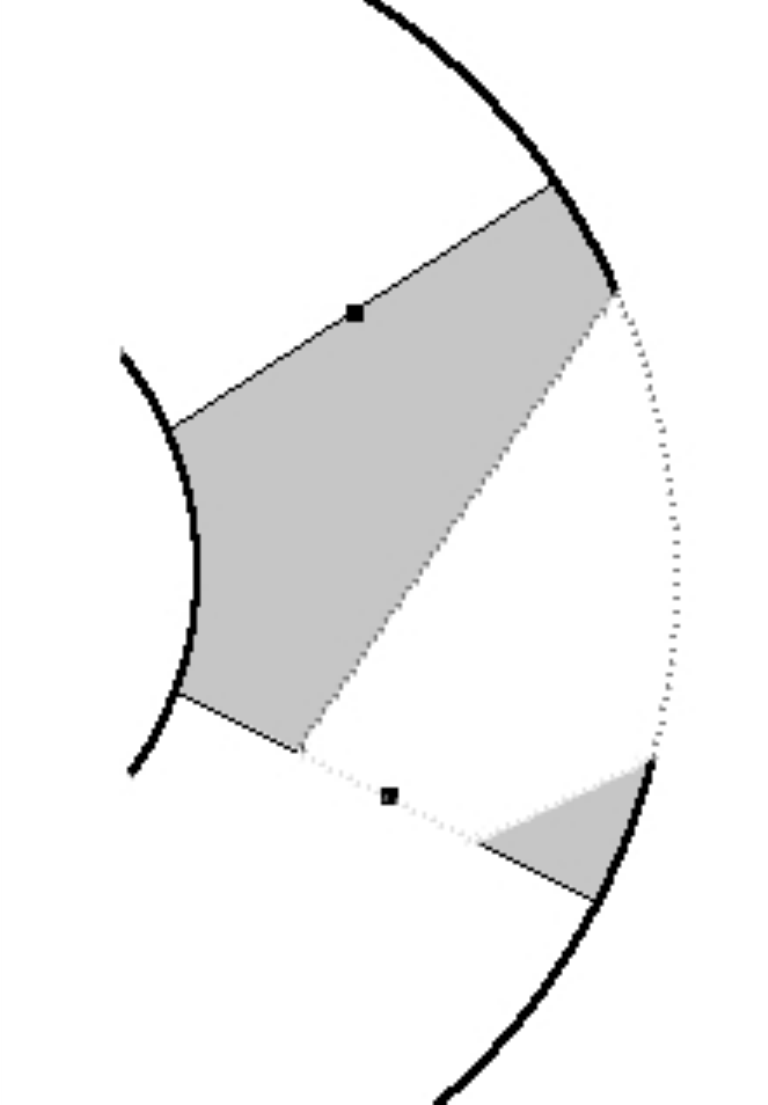}}
\put(85,0){\includegraphics[scale=0.4]{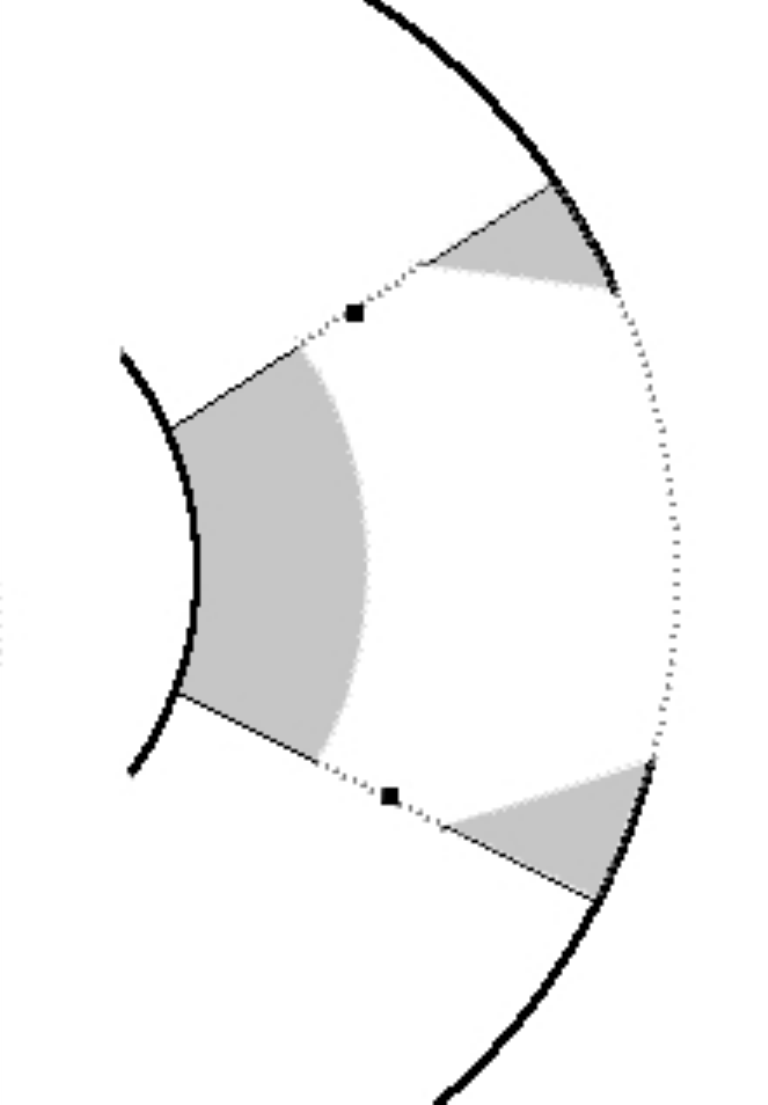}}

\put(7,23){\footnotesize $\evIin{m}$}
\put(22,34){\footnotesize $\xi^{k+1}$}
\put(24,9){\footnotesize $\xi^k$}
\put(33,40){\footnotesize $\evIout{n+1}$}
\put(35,4){\footnotesize $\evIout{n}$}

\qbezier(30,22)(35,25)(40,30)
\put(40,32){\footnotesize $\fS\cap\evB_{\omega_{\wzeta}>0}$}
\qbezier(49,28)(58,20)(60,20)
\qbezier(45,29)(55,0)(73,10)

\put(85,39){\footnotesize $\fS\cap\evB_{\omega_{\wzeta}>0}$}
\qbezier(89,37)(90,35)(96,25)
\qbezier(95,37)(97,36)(105,35)
\qbezier(92,37)(102,20)(108,12)
\end{picture}
\end{center}
\caption{Up to homeomorphism, the three possible shapes of $\evB_{\omega_{\wzeta}>0}\cap\fS$.}\label{fig:fS2}
\end{figure}

We conclude that each connected component of $\evB_{\omega_{\wzeta}>0}\cap\fS$ is homeomorphic to a closed disc with a closed interval deleted in its boundary, and each connected component of an intersection $\evB_{\omega_{\wzeta}>0}\cap\fS\cap\fS'$ is homeomorphic to a closed or a semi-closed interval. There are no triple intersections for this covering. It is thus clear that this covering is Leray for $(\beta_{\omega_{\wzeta}>0}^{\vtn_\ell})_!\cK^{\vtn_\ell}$ ($\ell$ even).

Finally, the properties of the curves $\gamma_k$ proven in Lemma \ref{lem:alphaxisneu} correspond to the behaviour of the standard Leray covering $\evfS $ described in Proposition \ref{prop:standtopmodel}, and we obtain the final assertion.
\end{proof}

\subsection{The odd case}
We now have $B^{\vtn_\ell}_{\omega_{\wzeta}>0}=\oddB_{\omega_{\wzeta}>0}$ (\cf\eqref{eq:Bfiltre}). In analogy to the procedure in the standard case as in section \ref{sec:stdodd}, we deduce the odd case from the even one by rotation by $\ximinodd $ as defined before \eqref{eq:oddalpha}. Note that
\begin{equation}\label{eq:ximin}
q\arg(\ximinodd)=
\left\{
\begin{array}{ll}
\pi & \text{if $p+q \equiv 0$ mod $2$}\\[3pt]
\pi-\dfrac{\pi}{p+q} & \text{ if $p+q \equiv 1$ mod $2$.}
\end{array}\right.
\end{equation}
The minimal subset to be considered is $\oddB_{\omega_{\ximinodd}>0}$ which is contained in $\oddB_{\omega_{\wzeta}>0}$ for all $\wzeta\in\mu_{p+q}$:
$$
\oddB_{\omega_{\ximinodd}>0}=
\begin{cases}
\ximinodd\cdot G^{-1}\Bigl(\Bigl(- \dfrac{\pi}{2}+\ve, \dfrac{\pi}{2}+\ve\Bigr)\Bigr) & \text{if $p+q \equiv 0$ mod $2$}\\[.3cm]
\ximinodd\cdot G^{-1}\Bigl(- \dfrac{\pi}{p+q}+\Bigl(- \dfrac{\pi}{2}+\ve, \dfrac{\pi}{2}+\ve\Bigr)\Bigr) & \text{if $p+q \equiv 1$ mod $2$.}
\end{cases}
$$
Since there is no critical value of $G $ inside $-\frac{\pi}{p+q}+(-\frac{\pi}{2}+\ve, \frac{\pi}{2}+\ve)$, the subset
$$
G^{-1} \Bigl(- \frac{\pi}{p+q}+\Bigl(- \frac{\pi}{2}+\ve, \frac{\pi}{2}+\ve \Bigr) \Bigr)
$$
is homeomorphic to $B^{\vtn_o}_{\omega_1<0}$. Therefore, $\oddB_{\omega_{\ximinodd}>0}$ is either equal to $\evB_{\omega_1>0}$ rotated by $\ximinodd$ (in the first case), or a small perturbation of it in the sense of applying the flow of the gradient vector field of the Morse function considered for a time interval without critical points.

Furthermore, considering arbitrary $\wzeta\in\mu_{p+q}$, we have
\begin{multline}\label{eq:oddBtwist}
\oddB_{\omega_{\wzeta}>0}\\
\shoveright{=\wzeta G^{-1} \Bigl(q\arg{\wzeta}-q\arg(\ximinodd)+q\arg(\ximinodd)+ \Bigl(\frac{\pi}{2}+\ve, \frac{3\pi}{2}+\ve \Bigr)\Bigr)}\\
=\wzeta G^{-1}\Bigl(q\arg(\wzeta (\ximinodd)^{-1}) - \sigma \frac{\pi}{p+q}+ \Bigl(-\frac{\pi}{2}+\ve, \frac{\pi}{2}+\ve \Bigr)\Bigr)
\end{multline}
with $\sigma=0$ if $p+q \equiv 0 \bmod 2$ and $\sigma=1$ otherwise. This space is again either equal to $\ximinodd \cdot \evB_{\omega_{\wzeta (\ximinodd)^{-1}}>0}$ or a perturbation of it in the same sense as above.

Let us define
$
\wt\gamma_{\wzeta}:=\ximinodd \cdot \gamma_{\wzeta (\ximinodd)^{-1}} \subset \oddB_{\omega_{\wzeta}>0}
$,
with $\gamma_{\wzeta}$ as in Lemma \ref{lem:alphaxisneu}. If $p+q $ is even, the whole situation to be considered is obtained from the one in the even case by rotation by $\pi $. Therefore, the statements of Lemma \ref{lem:alphaxisneu} and Proposition \ref{prop:leray} remain true for the curves $\wt\gamma_{\wzeta}$ and using $\oddB $, $\oddDin $, $\oddDout $ and $\codd $ instead of their even counterparts. We have to verify that this is still true for $p+q $ being odd.

Properties \eqref{lem:alphaxisneu1} and \eqref{lem:alphaxisneu4} of Lemma \ref{lem:alphaxisneu} for the family $(\wt\gamma_{\vtn})_{\vtn}$ are trivially satisfied. Furthermore, we have
$$
\wt\gamma_{\vtn} = \ximinodd \cdot \vtn \cdot (\ximinodd)^{-1} \ogamma_{\vtn \cdot (\ximinodd)^{-1}} = \vtn \cdot \ogamma_{\vtn \cdot (\ximinodd)^{-1}}.
$$
Now, each $z \in \wt\gamma_{\vtn}$ is of the form $z=\vtn \cdot x $ for some $x \in \ogamma_{\vtn \cdot (\ximinodd)^{-1}}$ and we deduce from \eqref{eq:thetachi} that
$$
G(\vtn^{-1} \cdot z)=
G(x) = \theta_{\vtn \cdot (\ximinodd)^{-1}} \in q\arg(\vtn \cdot (\ximinodd)^{-1}) +\ve +\Bigl({-}\frac{\pi}{2}, \frac{\pi}{2}- \frac{\pi}{p+q}\Bigr),
$$
and \eqref{eq:oddBtwist} yields $z \in \oddB_{\omega_{\vtn}>0}$. This proves property \eqref{lem:alphaxisneu2} of Lemma \ref{lem:alphaxisneu} for the family $\wt\gamma_{\vtn}$.

Property \eqref{lem:alphaxisneu3} of Lemma \ref{lem:alphaxisneu} follows due to the fact that \eqref{lem:alphaxisneu4} holds and therefore
\begin{align*}
\wt\gamma_{\wzeta}\cap\bin\ov A&\subset \ximinodd\cdot\evDin_m\cap\bin(\oddB_{\omega_{\wzeta}>0}),\\
\tag*{and}
\wt\gamma_{\wzeta}\cap\bout\ov A&\subset \ximinodd\cdot\evDout_n\cap\bout(\oddB_{\omega_{\wzeta}>0}),
\end{align*}
with $(m,n)=\cev(k) $ for $\vtn=\xi^k$ together with \eqref{align:inDoddD}.

We also have the analogous statement to Proposition \ref{prop:leray}:

\begin{proposition}\label{prop:lerayoddodd}
The collection of curves $\{\wt\gamma_{\vtn} \mid \vtn \in \mu_{p+q} \}$ decomposes $\wA$ into $p+q$ pieces, each being homeomorphic to a closed disc. This decomposition is a Leray covering by closed subsets for each of the sheaves $(\beta^{\vtn_\ell}_{\omega_{\wzeta}>0})_!\cK^{\vtn_\ell}$ with ${\wzeta}\in\nobreak\mu_{p+q}$. It induces the isomorphism \eqref{eq:Psiodd} ($\ell$ odd).
\end{proposition}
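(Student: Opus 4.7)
The strategy is to reduce to the even case (Proposition \ref{prop:leray}) via the rotation by $\ximinodd$, handling the parity cases separately. First, when $p+q$ is even we have $\ximinodd = -1$ and $q\arg(\ximinodd) = \pi$, so from \eqref{eq:oddBtwist} (with $\sigma=0$) the multiplication by $\ximinodd$ induces homeomorphisms $\evB_{\omega_{\wzeta(\ximinodd)^{-1}}>0} \isom \oddB_{\omega_\wzeta>0}$ for every $\wzeta \in \mu_{p+q}$. Since the construction $\wt\gamma_\wzeta = \ximinodd \cdot \gamma_{\wzeta(\ximinodd)^{-1}}$ transports the family $(\gamma_\wzeta)$ to $(\wt\gamma_\wzeta)$ by the same rotation, the assertions of Lemma \ref{lem:alphaxisneu} and Proposition \ref{prop:leray} for the even case translate verbatim to the odd one, using that $\ximinodd \cdot \evDin_m = \oddDin_{m'}$ and $\ximinodd \cdot \evDout_n = \oddDout_{n'}$ in the indexing consistent with \eqref{align:inDoddD}.

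The substantive case is $p+q$ odd, where $q\arg(\ximinodd) = \pi - \tfrac{\pi}{p+q}$ so that the shift $\sigma = 1$ in \eqref{eq:oddBtwist} produces a genuine perturbation. The properties \eqref{lem:alphaxisneu1}, \eqref{lem:alphaxisneu2}, \eqref{lem:alphaxisneu3}, \eqref{lem:alphaxisneu4} of Lemma \ref{lem:alphaxisneu} for the family $(\wt\gamma_\wzeta)$ have already been verified in the discussion preceding the Proposition, using the sharper condition \eqref{eq:thetachi} on $\theta_\wzeta$ (which was precisely the reason to require the $\tfrac{\pi}{p+q}$ slack) together with \eqref{align:inDoddD}. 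What remains is the Leray property for each sheaf $(\beta^{\vtn_\ell}_{\omega_\wzeta>0})_! \cK^{\vtn_\ell}$ with $\ell$ odd. For this I would reproduce the geometric analysis carried out in the proof of Proposition \ref{prop:leray}: fix a sector $\fS$ of the decomposition cut out by two adjacent curves $\wt\gamma_{\xi^k}, \wt\gamma_{\xi^{k+1}}$, and study $\oddB_{\omega_\wzeta>0} \cap \wt\gamma_{\xi^k}$ by the same half-line computation, now applied to $G(\wzeta^{-1} \ximinodd \xi^k x)$ with $x \in \ogamma_{\xi^k(\ximinodd)^{-1}}$. The key monotonicity claim on the parametrization $h(t)$ of the half-line $\mathfrak{h}$ is invariant under the rotation and depends only on $G \circ \ogamma_k$ being constant outside of $x=1$, so it transfers unchanged. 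Consequently each intersection $\oddB_{\omega_\wzeta>0} \cap \fS$ is homeomorphic to one of the three shapes of Figure~\ref{fig:fS2}, and each double intersection $\oddB_{\omega_\wzeta>0} \cap \fS \cap \fS'$ is a (semi)closed interval, with no triple intersections, which is sufficient to conclude that the covering is Leray for $(\beta^{\vtn_\ell}_{\omega_\wzeta>0})_! \cK^{\vtn_\ell}$.

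Once the Leray property is established, the final assertion that this covering induces the isomorphism $\Psiodd$ of \eqref{eq:Psiodd} follows by comparing combinatorics with the standard odd covering $(\oddfS_k)$ of \S\ref{sec:stdodd}: the curves $\wt\gamma_{\xi^k}$ connect $\oddIin{\oddin(k)}$ with $\oddIout{\oddout(k)}$ passing through $\xi^k$, exactly as the standard curves do, because the analogous properties for $(\gamma_{\xi^k})$ in the even case together with \eqref{align:inDoddD} yield the right indexing modulo the rotation by $\ximinodd$.

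The main obstacle is the Leray verification when $p+q$ is odd: one must confirm that the perturbation of $\oddB_{\omega_\wzeta>0}$ away from being a clean rotation of $\evB_{\omega_{\wzeta(\ximinodd)^{-1}}>0}$ does not create new intersection patterns. This is ensured by the $-\tfrac{\pi}{p+q}$ margin explicitly built into the range \eqref{eq:thetachi} for $\theta_\wzeta$ during the inductive construction in Lemma \ref{lem:alphaxisneu}: this margin is precisely what absorbs the shift $\sigma \tfrac{\pi}{p+q}$ appearing in \eqref{eq:oddBtwist}, so that each $\wt\gamma_\wzeta$ still lies in $\oddB_{\omega_\wzeta>0}$ and the half-line analysis governing the shape of the intersections $\oddB_{\omega_\wzeta>0} \cap \wt\gamma_{\xi^k}$ proceeds as in the even case.
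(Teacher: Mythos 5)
Your proof follows essentially the same route as the paper's: reduce the even-$(p+q)$ case to Proposition \ref{prop:leray} by rotation, and for $p+q$ odd redo the half-line analysis via \eqref{eq:Gxik} and \eqref{eq:oddBtwist}, noting that the monotonicity of the parametrization $r(t)$ is preserved so that the intersections $\oddB_{\omega_{\wzeta}>0}\cap\fS$ have the same three possible shapes. Your identification of the $\tfrac{\pi}{p+q}$ slack in \eqref{eq:thetachi} as the mechanism absorbing the $\sigma\tfrac{\pi}{p+q}$ shift is exactly the point the paper uses (implicitly) to justify that the endpoints of $\wt\gamma_{\xi^k}$ stay in $\oddB_{\omega_{\wzeta}>0}$, so the argument is correct and matches the paper's.
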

\begin{proof}
We only have to consider the case $p+q $ being odd. The proof goes along the same lines as the one of Proposition \ref{prop:leray} with the following changes. Again, it suffices to understand the shape of $\wt\gamma_{\xi^k} \cap \oddB_{\omega_{\wzeta}>0}$ for $\xi^k, \wzeta \in \mu_{p+q}$.

If $\xi^k \le_{\vtn_\ell} \wzeta $, then $\wt\gamma_{\xi^k} \subset \oddB_{\omega_{\xi^k}>0} \subset \oddB_{\omega_{\wzeta}>0}$. Otherwise, we proceed in the same way. For $\xi^k x \in \xi^k \cdot \ogamma_{\xi^k(\ximinodd)^{-1}} = \wt\gamma_{\xi^k}$, we have the equality \eqref{eq:Gxik}:
$$
G(\wzeta^{-1}\xi^k x)=q\arg\wzeta+\arg\bigl(\xi^{pk}g(x)+(p+q)(\xi^{pk}-\wzeta^p)\bigr).
$$
The term $g(x)=:r e^{i\theta}$ has fixed argument $\theta:=\arg\bigl(\xi^k(\ximinodd)^{-1}\bigr)$. From \eqref{eq:oddBtwist}, we deduce that
\begin{equation}\label{eq:halflineodd}
\arg\bigl(\underbrace{\xi^{pk} \cdot r e^{i\theta_k}}_{\textup{(i)}}+\underbrace{(p+q) (\xi^{pk}- \wzeta^p)}_{\textup{(ii)}}\bigr)\!\in\! \ve- \frac{\pi}{p+q}+\Bigl({-}\frac{\pi}{2}, \frac{\pi}{2}\Bigr)
\Leftrightarrow\xi^k x\in\evB_{\omega_{\wzeta}>0}.
\end{equation}
The sum (i)+(ii) again describes a half-line in the complex plane. The argument used in the proof (Claim) of Proposition \ref{prop:leray} shows that choosing a regular parametrization of the curve $\wt\gamma_{\xi^k}$, the half-line $\mathfrak{h}$ inherits a parametrization $[0,2] \to \mathfrak{h}$, $t \mto \textup{(ii)}+ r(t)e^{i\theta}$ with $r(1)=0 $ such that $r(t) $ is monotone on $(0,1) $ and $(1,2) $ and satisfies $\lim_{t\to0} r(t)=\lim_{t\to2} r(t)=\infty $. Since the half-line has to be inside the region determined by \eqref{eq:halflineodd} for $r $ big enough (due to the fact that the endpoints of $\wt\gamma_{\xi^k}$ lie in $\oddB_{\omega_{\wzeta}>0}$) and since (ii) cannot be inside this region, we obtain the same conclusion as in the proof of Proposition \ref{prop:leray}.
\end{proof}

Since the Leray coverings of Proposition \ref{prop:leray} ($\ell $ even) and \ref{prop:lerayoddodd} ($\ell $ odd) respect the filtrations (\cf \eqref{eq:Filell}) given by the subspaces $\evB_{\omega_{\wzeta}>0}$ (\resp odd), we finally obtain the following corollary concluding the proof of Theorem \ref{thm:mainintro}:
\begin{corollaire}\label{cor:isofil}
The isomorphisms $\Psi_\ell $ \eqref{eq:Labstrsimple} are compatible with the filtration \eqref{eq:Filell} and the filtration defined in \ref{def:standfilev} for $\ell $ even (\resp \ref{def:standfilodd} for $\ell $ odd).
\end{corollaire}

\backmatter
\def\ieme{rd\xspace}
\providecommand{\SortNoop}[1]{}\providecommand{\eprint}[1]{\href{http://arxiv.org/abs/#1}{\texttt{arXiv\string:\allowbreak#1}}}
\providecommand{\bysame}{\leavevmode ---\ }
\providecommand{\og}{``}
\providecommand{\fg}{''}
\providecommand{\smfandname}{\&}
\providecommand{\smfedsname}{\'eds.}
\providecommand{\smfedname}{\'ed.}
\providecommand{\smfmastersthesisname}{M\'emoire}
\providecommand{\smfphdthesisname}{Th\`ese}

\end{document}